\documentclass[11pt,regno]{amsart}
\usepackage{euscript,graphicx,overpic}
\usepackage{epstopdf,rotating}
\newtheorem{theorem}{Theorem}
\newtheorem{lemma}{Lemma}
\newtheorem{corollary}{Corollary}
\newtheorem{proposition}{Proposition}
\theoremstyle{definition}

\newtheorem*{remark*}{Remark}

\newcommand{\eqdef}{\stackrel{\scriptscriptstyle\rm def}{=}}

\DeclareMathOperator{\diam}{diam}
\DeclareMathOperator{\card}{card}
\DeclareMathOperator{\supp}{supp}
\DeclareMathOperator{\dist}{dist}
\DeclareMathOperator{\Dist}{Dist}

\DeclareMathOperator{\Crit}{{\rm Crit}}

\DeclareMathOperator{\Comp}{{\rm Comp}}
\DeclareMathOperator{\ess}{{\rm ess}}

\def\bC{\mathbb{C}}

\def\bR{\mathbb{R}}

\def\cM{\EuScript{M}}

\def\cL{\EuScript{L}}
\def\cO{\EuScript{O}}

\def\e{{\varepsilon}}

\DeclareMathSymbol{\varnothing}{\mathord}{AMSb}{"3F}
\renewcommand{\emptyset}{\varnothing}

\author{K. Gelfert} \address{Instituto de Matem\'atica, UFRJ,
Cidade Universit\'aria - Ilha do Fund\~ao, Rio de Janeiro 21945-909,  Brazil}
\email{gelfert@im.ufrj.br}

\author{F. Przytycki} \address{Instytut Matematyczny Polskiej Akademii Nauk, ul. \'{S}niadeckich 8, 00-956 Warszawa, Poland}
\email{feliksp@impan.gov.pl}

\author{M. Rams} \address{Instytut Matematyczny Polskiej Akademii Nauk, ul. \'{S}niadeckich 8, 00-956 Warszawa, Poland}
\email{rams@impan.gov.pl}

\author{J. Rivera-Letelier} \address{Facultad de Matem{\'a}ticas, Pontificia Universidad Cat{\'o}lica de Chile, Avenida Vicu{\~n}a Mackenna~4860, Santiago, Chile}
\email{riveraletelier@mat.puc.cl}

\begin{document}

\title[Lyapunov spectrum]{Lyapunov spectrum\\ for exceptional rational maps}

\begin{abstract}
We study the dimension spectrum for Lyapunov exponents for
rational maps acting on the Riemann sphere and characterize it by
means of the Legendre-Fenchel transform of the hidden variational
pressure. This pressure is defined by means of the variational
principle with respect to non-atomic invariant probability
measures and is associated to certain $\sigma$-finite conformal
measures. This allows to extend previous results to exceptional
rational maps.
\end{abstract}


\keywords{}
\subjclass[2000]{Primary: %
37D25, 
37C45, 
28D99, 
37F10
}
\maketitle

\section{Introduction and main results}\label{sec:intro}

We are going to study the Lyapunov exponents of a rational
function $f\colon \overline\bC\to\overline\bC$ acting on the
Riemann sphere, of degree at least~$2$. In particular, continuing
the investigations in~\cite{GelPrzRam:}, we are interested in the
case that the map~$f$ is exceptional. Slightly
modifying~\cite[Section 1.3]{MakSmi:00}, we call~$f$
\emph{exceptional} if there exists a finite, nonempty, and forward
invariant set $\Sigma' \subset J$ such that
\begin{equation}\label{e.except}
f^{-1}(\Sigma') \setminus \Sigma' \subset \Crit \,.
\end{equation}
Here $J=J(f)$ is the Julia set of~$f$ and $\Crit = \Crit(f)$ is
the set of critical points of~$f$. Every such set~$\Sigma'$ has at
most~$4$ points (see Lemma~\ref{lem:at most four}), hence there is
a maximal set with this property, which we denote by~$\Sigma(f)$.
If~$f$ is non-exceptional we put~$\Sigma(f) = \emptyset$. When~$f$
is clear from the context we denote~$\Sigma(f)$ simply
by~$\Sigma$.

\subsection{Main results}

Given $x\in J$, denote by $\underline\chi(x)$ and $\overline\chi(x)$ the
\emph{lower} and \emph{upper Lyapunov exponent} at $x$,
respectively.
If both values coincide then we call the common
value the \emph{Lyapunov exponent} at $x$ and denote it by~$\chi(x)$.
Similarly, for a $f$-invariant probability measure~$\mu$ we denote by~$\chi(\mu) \eqdef \int \log\, \lvert f'\rvert\, d \mu$ its \emph{Lyapunov exponent}.
Let $\cM$ be the set of all $f$-invariant Borel probability measures supported on $J$ and  $\widetilde\cM\subset\cM$ be the one of all non-atomic ones. Let $\widetilde\cM_E$ and $\cM_E$  be the sets of ergodic measures
contained in $\widetilde\cM$ and $\cM$, respectively. Let
\[
\alpha^- \eqdef \inf_{\mu\in \cM_E}\chi(\mu),
\quad\alpha^+ \eqdef \sup_{\mu\in\cM_E}\chi(\mu),
\quad\widetilde\alpha^+ \eqdef \sup_{\mu\in\widetilde\cM_E}\chi(\mu),
\]
(see Corollary~\ref{def} for equivalent definitions of $\widetilde{\alpha}^+$).

For given numbers $0\le\alpha\le\beta$ we consider the level sets
\[
\cL(\alpha,\beta)\eqdef \{x\in J\colon \underline\chi(x)=\alpha,\overline\chi(x)=\beta\}.
\]
We denote by $\cL(\alpha)\eqdef\cL(\alpha,\alpha)$ the set of \emph{Lyapunov regular points} with exponent $\alpha$. We will describe the complexity of such level sets in terms of their Hausdorff dimension $\dim_{\rm H}$.
To do so, given a parameter $t\in\bR$ let us consider the potential $\varphi_t\eqdef -t\log\,\lvert f'\rvert$ and the pressure function
\begin{equation}\label{def:vp}
   P(\varphi_t) \eqdef
   \sup_{\mu\in \cM}\left( h_\mu(f) +\int_J\varphi_t\,d\mu\right).
\end{equation}
Notice that if $\Crit$ is nonempty the potential $\varphi_t$ is unbounded and~$P(\varphi_t)$ does not coincide with the the classical topological pressure for $t > 0$ (see~\cite{Wal:81}).
We define the {\it hidden variational pressure}
\begin{equation}\label{def:hidden}
   \widetilde P(\varphi_t)\eqdef \sup_{\mu\in\widetilde\cM}
   \left(h_\mu(f)+\int_J\varphi_t\,d\mu \right)
\end{equation}
(following the terminology in~\cite{PrzRivSmi:04}).
After Makarov and Smirnov~\cite[Theorem B]{MakSmi:00}, the pressure function $t\mapsto P(\varphi_t)$ fails to be real analytic on the interval $(-\infty,0)$  if and only if $f$ is
exceptional and
\[
 \chi_{\sup}\eqdef
 \sup_{\mu \in \cM} \chi(\mu)  >
 \sup_{\mu \in \widetilde{\cM}} \chi(\mu).
\]
Moreover, by~\cite[Theorem A]{MakSmi:00} the function $t\mapsto \widetilde P(\varphi_t)$  is real analytic on the interval $(-\infty,0)$ and
\begin{equation}\label{e.MakaSmiri}
P(\varphi_t) = \max \{ \widetilde P(\varphi_t), -t\,\chi_{\sup} \}.
\end{equation}
For any $\alpha> 0$ let
\begin{equation}\label{e.desFtilde}
\widetilde F(\alpha)\eqdef
\frac{1}{\alpha}\inf_{t\in\bR}\left( \widetilde P(\varphi_t)+t\,\alpha\right)
\quad\text{ and }\quad
\widetilde F(0)\eqdef \lim_{\alpha\to 0+}\widetilde F(\alpha).
\end{equation}

Our main result is the following theorem.

\begin{theorem}\label{main1}
   Let $f$ be a rational function of degree at least~$2$. For any numbers $\alpha$, $\beta$ with $0\le\alpha\le \beta \le \widetilde{\alpha}^+$ we have
\[
   \min\{\widetilde F(\alpha),\widetilde F(\beta)\}
   \le \dim_{\rm H} \cL(\alpha, \beta)\le
   \max_{\alpha\leq q \leq \beta}\widetilde F(q)\,.
\]
In particular, for any $\alpha\in [\alpha^-,\widetilde\alpha^+] \setminus
\{0\}$ we have
\[
   \dim_{\rm H} \cL(\alpha) = \widetilde F(\alpha) \,.
\]
For $\alpha=0$ we have
\[
\dim_{\rm H} \cL(0) \geq \widetilde F(0)\,.
\]
Moreover,
\[
\left\{ x\in J\colon -\infty<\chi(x)<\alpha^-\right\}
 = \left\{ x\in J\setminus\Sigma \colon \overline{\chi}(x)>\widetilde \alpha^+\right\}
 = \emptyset
\]
and
\[
\dim_{\rm H}\left\{ x\in J\colon
0<\overline{\chi}(x)<\alpha^-\right\} = 0.
\]
\end{theorem}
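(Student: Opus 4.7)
\medskip

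\noindent\textbf{Proof plan for Theorem~\ref{main1}.}
The strategy is the classical multifractal analysis via Legendre--Fenchel duality, adapted to the exceptional case by systematically replacing the topological pressure $P(\varphi_t)$ with its hidden counterpart $\widetilde P(\varphi_t)$. This replacement is forced by the observation that, in the exceptional regime, the measures realizing $P$ but not $\widetilde P$ are atomic measures supported on $\Sigma$, and such measures contribute nothing to the multifractal analysis of $J\setminus\Sigma$. The analyticity of $\widetilde P$ on $(-\infty,0)$ from~\cite{MakSmi:00} together with \eqref{e.MakaSmiri} guarantee that the Legendre transform $\widetilde F$ is concave and well-behaved on the relevant range.

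For the upper bound on $\dim_{\rm H} \cL(\alpha,\beta)$, the plan is to fix a finite $\varepsilon$-partition of $[\alpha,\beta]$ and, for each $q$ in the partition, cover the Birkhoff-type sublevel set
\[
\bigl\{x\in J \colon \tfrac{1}{n}\log\lvert(f^n)'(x)\rvert \in (q-\varepsilon, q+\varepsilon)\ \text{for sufficiently many }n\bigr\}
\]
by Moran-type families of generalized Bowen balls of radius approximately $e^{-nq}$. A volume lemma for the $\sigma$-finite conformal measures associated to $\widetilde P(\varphi_t)$, provided by~\cite{PrzRivSmi:04}, bounds the $s$-dimensional Hausdorff premeasure by a quantity of order $\exp\bigl(n(\widetilde P(\varphi_t) + tq - sq)\bigr)$. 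Optimising over $t$ and taking a union over the partition produces the required upper bound $\max_{q\in[\alpha,\beta]} \widetilde F(q)$.

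For the lower bound $\min\{\widetilde F(\alpha), \widetilde F(\beta)\}$, I would first approximate $\widetilde P(\varphi_t)$ by pressures on hyperbolic invariant subsets $X_t \subset J\setminus(\Crit\cup\Sigma)$, obtaining two ergodic non-atomic measures $\mu_\alpha, \mu_\beta\in\widetilde\cM_E$ with Lyapunov exponents close to $\alpha, \beta$ and ratios $h_\mu(f)/\chi(\mu)$ close to $\widetilde F(\alpha), \widetilde F(\beta)$ respectively. A Barreira--Saussol type Moran construction, alternating long blocks of typical $\mu_\alpha$-orbits with long blocks of typical $\mu_\beta$-orbits at increasing scales, then produces a Cantor subset of $\cL(\alpha,\beta)$ of dimension $\min\{\widetilde F(\alpha),\widetilde F(\beta)\}$. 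Specializing to $\alpha=\beta\neq 0$ yields the equality $\dim_{\rm H}\cL(\alpha)=\widetilde F(\alpha)$ in the range $[\alpha^-,\widetilde\alpha^+]\setminus\{0\}$, while a limiting argument with measures whose exponents tend to zero handles the inequality at $\alpha=0$.

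Finally, the emptiness and zero-dimension statements follow from ergodic decomposition: for any $x\in J$ with $-\infty<\chi(x)<\alpha^-$, weak-$*$ limits of the empirical measures $\frac{1}{n}\sum_{k=0}^{n-1}\delta_{f^k(x)}$ are $f$-invariant probabilities with Lyapunov exponent below $\alpha^-$, contradicting its definition via ergodic decomposition; the analogous argument applies to $\overline\chi(x)>\widetilde\alpha^+$, with the condition $x\in J\setminus\Sigma$ precisely ensuring that the limit measure lies in $\widetilde\cM$. The set $\{0<\overline\chi(x)<\alpha^-\}$ is covered by points whose orbits asymptotically spend almost all of their time near the postcritical set, and a direct covering argument using the conformal measures of~\cite{PrzRivSmi:04} gives dimension zero. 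The chief obstacle I expect is the lower-bound construction for small $\alpha$: selecting ergodic non-atomic equilibrium-type measures with Lyapunov exponent close to zero that avoid $\Sigma$, and verifying that the Moran construction does not accumulate on the exceptional set, is exactly the point where the hidden pressure framework from~\cite{PrzRivSmi:04} is indispensable.
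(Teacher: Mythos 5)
Your overall plan — replace $P$ by $\widetilde P$, do multifractal analysis by Legendre duality, upper bound via conformal measures, lower bound via hyperbolic Moran/Cantor subsets, and handle the tails by weak-$*$ limits of empirical measures — matches the paper's skeleton, and the lower-bound outline via approximation by expanding repellers disjoint from $\Sigma$ is essentially what the paper does (citing the bridges construction and \cite[Theorem~3]{GelPrzRam:}). But there are two substantive problems.

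First, you attribute the conformal measures for the upper bound to \cite{PrzRivSmi:04}. That reference only supplies \emph{finite} conformal measures, and Proposition~\ref{p:known conformal}(2) shows that when $f$ has a phase transition in the negative spectrum and $t\le t_-$, no finite $e^{P(\varphi_t)-\varphi_t}$-conformal measure outside $\Crit$ exists: any such measure blows up near the repelling periodic point $p\in\Sigma$. The whole point of Section~\ref{sec:bal} is to construct, by a Patterson--Sullivan procedure restricted to backward branches avoiding a neighborhood of $\Sigma$, a genuinely new $\sigma$-finite measure that is $e^{\widetilde P(\varphi_t)-\varphi_t}$-conformal outside $\Crit$ and finite off neighborhoods of $\Sigma$ (Proposition~\ref{fsfjl}). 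Without that construction there is no measure to run Frostman's lemma against in the range $t<t_-$, so your upper-bound paragraph has no input.

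Second, and more seriously, your argument for the completeness of the spectrum is wrong. You claim that for $x\in J\setminus\Sigma$ with $\overline\chi(x)>\widetilde\alpha^+$, the weak-$*$ limits of the empirical measures $\frac1n\sum_{k<n}\delta_{f^k(x)}$ lie in $\widetilde\cM$ because $x\notin\Sigma$. This is false: a point $x\notin\Sigma$ whose orbit spends an asymptotically full proportion of time near a repelling periodic orbit in $\Sigma$ yields empirical measures converging to an atomic measure supported on $\Sigma$, i.e.\ a measure in $\cM_E\setminus\widetilde\cM_E$ with exponent $\chi_{\sup}$. The real content of Proposition~\ref{prop:monic} (Section~\ref{sec:final-3}, or alternatively Appendix~\ref{s:alternative}) is that this cannot produce $\overline\chi(x)>\widetilde\alpha^+$: each time the orbit re-enters a small neighborhood of $\Sigma_+$ it must pass close to a critical point $c\in f^{-1}(\Sigma)\setminus\Sigma$, and the resulting derivative loss caps the block exponent by $\chi_{\ess}(c)=\chi(p)/\deg_{f^k}(c)$ (Lemma~\ref{l.neu}); one then checks $\chi_{\ess}^+\le\widetilde\alpha^+$ by exhibiting a periodic point $q\notin\Sigma$ close to $c$ realizing $\chi_{\ess}(c)$ (Lemma~\ref{l:periodicexcep}). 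Your empirical-measure argument works for $-\infty<\chi(x)<\alpha^-$ and for the non-exceptional case, but it silently sidesteps exactly the phenomenon that makes the exceptional case hard.
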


The result of the above theorem has been shown in~\cite{GelPrzRam:} in the particular case that $f$ is non-exceptional.

To prove our main result, in this paper we will create new technical tools in order to deal with exceptional rational maps and then show how these tools can be applied to adapt the original proofs in~\cite{GelPrzRam:}.
The paper is organized as follows.
In Section~\ref{sec:2} we collect some known results about exceptional maps that will be used in the rest of the paper.
In Section~\ref{sec:hidden} we will introduce the concept of hidden pressure using backward branches of $f$, analogously to the tree pressure from~\cite{PrzRivSmi:04}.
In the case of exceptional rational maps we not always have at hand a finite conformal measure with dense support, see Proposition~\ref{p:known conformal}.
For that reason, in Section~\ref{sec:bal} we introduce $\sigma$-finite conformal measures that are associated to the hidden pressure.
Finally, in Section~\ref{sec:final} we apply these tools to prove Theorem~\ref{main1}.
In Section~\ref{sec:final-1} we provide a lower bound for dimension using the fact that for any rational map we can find an increasing family of uniformly expanding Cantor repellers contained in $J$ using a construction of bridges that has been established in~\cite{GelPrzRam:} and applies to the setting of this paper without changes.
In Section~\ref{sec:final-2} we provide an upper bound for dimension applying Frostman's Lemma to an appropriate $\sigma$-conformal measure at a conical point. Finally, in Section~\ref{sec:final-3}, we show the existence of periodic orbits  in $J \setminus \Sigma$ with exponent as large as possible.

We give an alternative proof of this result in
Appendix~\ref{s:alternative} via a variant of Bowen's periodic
specification property, \cite{Bow:71}.

\section{Exceptional maps and phase transitions}
\label{sec:2}
For a critical point~$c \in \Crit$ we will denote by~$\deg_f(c)$
the local degree of~$f$ at~$z = c$. The following result has been
proved by the same computation first in~\cite[Lemma 2]{DouHub:93}.

\begin{lemma}\label{lem:at most four}
    If~$\Sigma'$ is a finite subset of~$\overline{\bC}$ such that $f^{-1}(\Sigma') \setminus \Sigma' \subset \Crit$, then $\card\Sigma'\le 4$. If~$f$ is a polynomial then $\card (\Sigma' \setminus \{\infty\}) \le 2$.
\end{lemma}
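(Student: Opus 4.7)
The plan is to apply the Riemann--Hurwitz formula together with a double count of the preimage set. Set $n\eqdef \card\Sigma'$ and split $f^{-1}(\Sigma')$ as a disjoint union $A\sqcup B$, where $A\eqdef f^{-1}(\Sigma')\cap \Sigma'$ and $B\eqdef f^{-1}(\Sigma')\setminus\Sigma'$. The hypothesis $B\subset\Crit$ means every $p\in B$ satisfies $\deg_f(p)\ge 2$, so $\deg_f(p)-1\ge 1$ for each such $p$; summing gives $\card B \le \sum_{p\in B}(\deg_f(p)-1)$.

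Next I would count preimages with multiplicity. For each $y\in\Sigma'$, $\sum_{p\in f^{-1}(y)}\deg_f(p)=d$, where $d\eqdef \deg f\ge 2$, so
\[
\card f^{-1}(\Sigma') \;=\; dn - C,\qquad C\eqdef \sum_{p\in f^{-1}(\Sigma')}(\deg_f(p)-1).
\]
By Riemann--Hurwitz applied to the map $f\colon\overline\bC\to\overline\bC$ one has $\sum_{p\in\overline\bC}(\deg_f(p)-1)=2d-2$, and since the sum defining $C$ runs over a subset of critical points (together with non-critical terms equal to zero), $C\le 2d-2$.

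Now combining the pieces: $\card A\le n$ and $\card B\le C$ (from the first paragraph), so
\[
dn-C \;=\; \card A+\card B \;\le\; n+C,
\]
which gives $(d-1)n\le 2C\le 4(d-1)$ and hence $n\le 4$, proving the first assertion. For the polynomial case, since $f^{-1}(\infty)=\{\infty\}$, if $\infty\in\Sigma'$ then $\infty$ contributes to $A$ and not $B$; removing $\infty$ from $\Sigma'$ yields a set $\Sigma''\subset\bC$ still satisfying the hypothesis with $\card\Sigma''=\card\Sigma'-1$. So one may assume $\Sigma'\subset\bC$; then $f^{-1}(\Sigma')\subset\bC$ as well, so only finite critical points contribute to $C$. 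For a polynomial of degree $d$ these contribute $d-1$ to Riemann--Hurwitz (the remaining $d-1$ coming from the critical point at $\infty$), so $C\le d-1$. Repeating the estimate gives $(d-1)n\le 2(d-1)$, i.e.\ $n\le 2$.

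The argument is essentially a bookkeeping one, so I do not anticipate a serious obstacle; the only subtle point is ensuring that reducing to the case $\Sigma'\subset\bC$ for polynomials does not lose the hypothesis, which follows from the total invariance of $\infty$.
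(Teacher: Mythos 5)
Your proof is correct and follows essentially the same strategy as the paper's: a double count of $\deg(f)\cdot\card\Sigma'$ via preimages combined with the Riemann--Hurwitz bound $\sum_p(\deg_f(p)-1)=2\deg(f)-2$, and for polynomials restricting to finite critical points. The only cosmetic difference is that you bound $\card B$ by $\sum_{p\in B}(\deg_f(p)-1)\le C$, while the paper bounds it by $\card\Crit\le 2(\deg f -1)$ directly; both lead to the same inequality $(\deg f -1)\card\Sigma'\le 4(\deg f -1)$.
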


\begin{proof}
Using that~$f$ has $2 \deg(f) - 2$ critical points counted with multiplicity, by~\eqref{e.except} we have
\begin{multline*}
\deg(f) \card \Sigma'
=
\sum_{x \in f^{-1}(\Sigma')} \deg_f(x)
=
\card f^{-1}(\Sigma') + \sum_{x \in f^{-1}(\Sigma')} (\deg_f(x) - 1)
\\ \le
\card \Sigma' + \card \Crit + 2(\deg(f) - 1)
\le
\card \Sigma' + 4(\deg(f) - 1),
\end{multline*}
so~$\card \Sigma' \le 4$.
If $f$ is a polynomial, then it has at most~$\deg(f) - 1$ finite critical points counted with multiplicity, so in this case $\card(\Sigma' \setminus \{ \infty \} ) \le 2$.
\end{proof}

The following is an example of a one parameter family of exceptional rational maps such that for some parameters the exceptional set contains a critical point: for~$\lambda \in \bC$ put
\[
    f_\lambda(z)=(\lambda z^d-\lambda z^{d-1}+1)^{-1}.
\]
The point~$z = 0$ is critical of multiplicity~$d - 1$, the point $1=f_\lambda(0)$ is fixed of multiplier~$- \lambda$, and the point $z=\infty$ is critical of maximal multiplicity and the only preimage of $z = 0$.
Thus, when~$z = 1$ belongs to the Julia set we have~$\{ 0, 1 \} \subset \Sigma$.
By choosing suitable~$\lambda$, the fixed point~$z = 1$ could be repelling, Cremer, etc.

If~$f$ is exceptional, then the set $\Sigma$ contains at least one
periodic point. Observe that it hence must consist of a finite
number of periodic points plus possibly some of their preimages.
We write $\Sigma = \Sigma_0\cup\Sigma_+$, where $\Sigma_0$ denotes
the subset of all neutral periodic points in $\Sigma$ plus its
pre-images and where $\Sigma_+$ denotes the subset of all
expanding periodic points in $\Sigma$ plus its
pre-images. We refer to~\cite{MakSmi:96} for further details on
exceptional maps and numerous examples.

We will say that~$f$ has a \emph{phase transition in the negative spectrum} if the function~$t \mapsto P(\varphi_t)$ fails to be real analytic on~$(-\infty, 0)$.
In this case we put
\[
   t_- \eqdef \sup \left\{ t < 0 : P(\varphi_t) = - t \chi_{\sup} \right\}.
\]
We have~$t_- < 0$ and, since the function $t \mapsto P(\varphi_t)$ is convex, for each~$t \in (-\infty, t_-)$ we have $P(\varphi_t) = - t \chi_{\sup}$.

In the following proposition we gather several results in~\cite{MakSmi:00, PrzRivSmi:04}. A measurable subset~$A$ of~$\overline{\bC}$ is said to be \emph{special} if  $f\colon A\to f(A)$ is injective.
Given a function $\psi\colon\overline\bC\to\bR$, a Borel probability measure $\nu$ on $J$ is said to be \emph{$e^{\psi}$-conformal outside} $Z\subset J$ if
for every special set $A\subset J\setminus Z$ we have
\[
\nu (f(A))=\int_A  e^{\psi(x)}\, d\nu(x) .
\]
If $Z=\emptyset$ we simply say that~$\nu$ is \emph{$e^\psi$\nobreakdash-conformal}.

\begin{proposition}\label{p:known conformal}
   Let~$f$ be a rational map of degree at least~$2$ and let~$t \in \mathbb{R}$. Then we have the following properties.
\begin{enumerate}
\item[1.] Suppose that~$f$ does not have a phase transition in the
negative spectrum, or that~$f$ has a phase transition in the
negative spectrum and~$t > t_-$. Then~$\widetilde{P}(\varphi_t) =
P(\varphi_t)$ and there is a finite $e^{P(\varphi_t) -
\varphi_t}$-conformal measure whose support is equal to~$J$.
\item[2.] Suppose that~$f$ has a phase transition in the negative
spectrum and that~$t \le t_-$.
Then~$f$ is exceptional, there is an expanding periodic point $p \in \Sigma$ such that $P(\varphi_t) = -t \chi(p)$ and for every neighborhood~$V$ of~$p$ and every measure~$\nu$ that is
$e^{P(\varphi_t) - \varphi_t}$-conformal measure outside~$\Crit$
we have
$$ \nu(V \setminus \{ p \}) = + \infty. $$
\end{enumerate}
\end{proposition}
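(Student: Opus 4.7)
The plan is to treat the two parts separately, both being essentially repackagings of results from \cite{MakSmi:00} and \cite{PrzRivSmi:04}, with the identity \eqref{e.MakaSmiri} providing the organising principle. First I would establish the pressure identity in Part~1 and then upgrade to existence of a conformal measure with full support, and afterwards extract the atomic maximiser in Part~2 and analyse the local geometry of $\nu$ near $p$.

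For Part~1, the equality $\widetilde P(\varphi_t)=P(\varphi_t)$ is a bookkeeping consequence of \eqref{e.MakaSmiri}. If $f$ has no phase transition in the negative spectrum then $t\mapsto P(\varphi_t)$ is real analytic and convex on $(-\infty,0)$ while $\widetilde P(\varphi_t)$ is analytic with $P\geq \widetilde P$; a strict gap on any sub-interval would combine with the linearity of $t\mapsto -t\chi_{\sup}$ inside \eqref{e.MakaSmiri} to produce a non-analytic kink, contradicting the hypothesis. If there is a phase transition but $t>t_-$, then by definition of $t_-$ we have $P(\varphi_t)>-t\chi_{\sup}$, and \eqref{e.MakaSmiri} immediately forces $P(\varphi_t)=\widetilde P(\varphi_t)$. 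For the conformal measure I would invoke the Patterson--Sullivan construction as carried out in \cite[Section 3]{PrzRivSmi:04}: fix a non-exceptional base point $z_0\in J$, form the normalised weighted preimage sums over $f^{-n}(z_0)$ with weights $e^{-nP(\varphi_t)}|(f^n)'|^{-t}$, and take a weak-$*$ subsequential limit $\nu$. Finiteness is automatic from normalisation; full support on $J$ follows because the preimages of $z_0$ are dense; and the conformality exponent is computed from the exponential rate of the partition functions, which under our hypothesis equals $\widetilde P(\varphi_t)=P(\varphi_t)$ (this is exactly the regime where the tree pressure coincides with the variational one and no atom concentrates at $\Sigma$).

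For Part~2, the strict inequality $\widetilde P(\varphi_t)<-t\chi_{\sup}=P(\varphi_t)$ means the variational principle for $P(\varphi_t)$ cannot be realised by a non-atomic measure, so any maximising sequence accumulates on an atomic invariant measure; such a measure is supported on a single periodic orbit of zero entropy, hence satisfies $\chi(\mu)=\chi_{\sup}$ and $P(\varphi_t)=-t\chi(\mu)$. That this orbit lies inside $\Sigma$ and is expanding is precisely the content of \cite[Theorem~B]{MakSmi:00}: the phase transition mechanism in the negative spectrum is identified there with the presence of an expanding periodic point $p\in\Sigma_+$ whose Lyapunov exponent realises $\chi_{\sup}$. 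I would simply extract this $p$ and record $P(\varphi_t)=-t\chi(p)$.

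The serious step is the asserted blow-up $\nu(V\setminus\{p\})=+\infty$. My plan is to localise: let $k$ be the period of $p$, $\lambda=(f^k)'(p)$ with $|\lambda|=e^{k\chi(p)}>1$, and let $g$ be the Koenigs linearising inverse branch of $f^k$ fixing $p$. Iterating the conformal relation along $g$ gives $\nu(g^n(V))=e^{-nkP(\varphi_t)}\int_{V}|(f^{nk})'|^{-t}\,d\nu$ up to bounded distortion, and since $P(\varphi_t)+t\chi(p)=0$ the factors $e^{-nkP(\varphi_t)}|(f^{nk})'(p)|^{-t}$ degenerate to~$1$. The mass does not collapse onto $\{p\}$ however, because $p\in\Sigma$ forces every $f$-preimage of $p$ that is not already in $\Sigma$ to be critical, so the infinitely many pre-images of these critical points in turn produce an infinite tree of disjoint preimages of $V$ accumulating on $p$ through the geometry of $g$, each carrying (by the bounded-distortion conformal computation, valid because $\nu$ is conformal outside $\Crit$) a comparable amount of $\nu$-mass. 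Summing their contributions along backward orbits yields a divergent geometric-type series. The main obstacle is controlling this distortion past the critical preimages and ensuring the annular contributions are indeed bounded below rather than washed out by the critical exponents $|z-c|^{(\deg_f(c)-1)t}$; I would follow the bounded-distortion estimates and the Poincar\'e series divergence argument of \cite[Proposition~6.1]{PrzRivSmi:04} with $p$ playing the role of the exceptional attractor, reducing matters to the fact that at the critical threshold $P(\varphi_t)=-t\chi(p)$ the tree sum over preimages dominated by $p$ diverges by construction of $t_-$.
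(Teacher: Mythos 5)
The architecture of your plan matches the paper's: cite \cite[Theorem~B]{MakSmi:00} for the expanding periodic point $p\in\Sigma$ with $P(\varphi_t)=-t\chi(p)$, use \eqref{e.MakaSmiri} and the Patterson--Sullivan construction for Part~1, and prove the blow-up by iterating the linearising inverse branch $\phi$ of $f^n$ fixing $p$ while noting that the per-period Jacobian factor $e^{-n(P(\varphi_t)-\varphi_t(p))}$ degenerates to~$1$. For Part~1 the paper simply invokes \cite[Lemma~3.5]{MakSmi:00} for $t<0$ and \cite[Theorem~A]{PrzRivSmi:04} for $t\ge 0$ rather than rerunning Patterson--Sullivan; your route is longer but goes through the same machinery, though "finiteness is automatic from normalisation" glosses over the tightness/no-escape-of-mass issue that those references handle.

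The genuine gap is in the second half of your blow-up argument. After correctly setting up the cancellation along $\phi$, you claim to need the critical preimages in $f^{-1}(\Sigma)\setminus\Sigma$ to build "an infinite tree of disjoint preimages of $V$", and you identify as the main obstacle "controlling this distortion past the critical preimages" and the exponents $|z-c|^{(\deg_f(c)-1)t}$. This is a detour that would lead you into difficulties that do not exist, because the correct argument never goes near $\Crit$ at all. Choose $r>0$ small enough that $B(p,r)\setminus\{p\}$ is disjoint from $\Sigma\cup\Crit$ and $\phi(\overline{B(p,r)})\subset B(p,r)$, and set $U\eqdef B(p,r)\setminus\phi(B(p,r))$. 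Then $\nu(U)>0$ because $\nu$ has full support on $J$ (this is a consequence of the topological exactness of $f$ on $J$, and has nothing to do with the critical structure of $\Sigma$); the sets $\phi^m(U)$, $m\ge 1$, are pairwise disjoint and all contained in $B(p,r)\setminus\{p\}$, hence stay uniformly far from $\Crit$, so the conformality of $\nu$ outside $\Crit$ applies with a Koebe distortion constant $C$ independent of $m$ to give $\nu(\phi^m(U))\ge C^{-1}\nu(U)\,e^{-nm(P(\varphi_t)-\varphi_t(p))}=C^{-1}\nu(U)$. Summing these disjoint annular contributions yields $\nu(V\setminus\{p\})\ge\nu(B(p,r)\setminus\{p\})=\sum_{m\ge 1}\nu(\phi^m(U))=+\infty$. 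Note the series is constant and bounded below, not a "divergent geometric-type series", and there is no distortion across critical points to control.
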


\begin{proof}
   The equality~$\widetilde{P}(\varphi_t) = P(\varphi_t)$ in part~$1$ follows from the definition of~$t_-$. The existence of the conformal measure in part~$1$ follows
from~\cite[Lemma~$3.5$]{MakSmi:00} if~$t < 0$ and from~\cite[Theorem~A]{PrzRivSmi:04} if~$t \ge 0$.

The fact that~$f$ is exceptional and that there is an expanding periodic point~$p \in \Sigma$ such that~$P(\varphi_t) = - t \chi(p)$ in part~$2$ is given by~\cite[Theorem~B]{MakSmi:00}.
To complete the proof of part~$2$, let~$\nu$ be a $e^{P(\varphi_t) - \varphi_t}$-conformal measure~$\Crit$.
Since~$f$ is topologically exact on~$J$, it follows that the support of~$\nu$ is equal to~$J$.
Let~$n \ge 1$ be the period of~$p$ and let~$r > 0$ be
sufficiently small so that~$B(p, r) \setminus \{ p \} \subset V
\setminus (\Sigma \cup \Crit)$ and so that the inverse
branch~$\phi$ of~$f^n$ fixing~$p$ is defined on~$B(p, r)$ and
satisfies $\phi(\overline{B(p, r)}) \subset B(p, r)$. Then there
is a distortion constant~$C > 0$ such that, if we put~$U \eqdef
B(p, r) \setminus \phi(B(p, r))$, then for each integer~$m \ge 1$
we have by the conformality of~$\nu$
\[
   \nu(\phi^n(U))
   \ge
   C^{-1} \nu(U) e^{- nm (P(\varphi_t) - \varphi_t(p))}
   = C^{-1} \nu(U).
\]
Thus
\[
   \nu(V \setminus \{ p \}) \ge \nu(B(p, r) \setminus \{ p \} )
   = \sum_{m = 1} \nu(\phi^m(U))
   = + \infty
\]
proving the proposition.
\end{proof}

\section{Hidden tree pressure}\label{sec:hidden}

The goal of this section is to prove equivalence of three pressure
functions: the hidden variational pressure defined
in~\eqref{def:hidden} as well as the hidden hyperbolic pressure
and the hidden tree pressure defined in~\eqref{def.hiddhyp}
and~\eqref{def.tree} below.

Given $t\in\bR$, the \emph{hidden hyperbolic pressure} is defined as
\begin{equation}\label{def.hiddhyp}
   \widetilde{P}_{\rm{hyp}}(\varphi_t) \eqdef \sup
   P_{f|X}(\varphi_t),
\end{equation}
where the supremum is taken over all compact $f$-invariant (i.e.
$f(X)\subset X$) isolated expanding subsets of $J\setminus\Sigma$.
We call such a set \emph{uniformly expanding repeller}. Here
\emph{isolated} means that there exists a neighborhood $U$ of $X$
such that $f^n(x)\in U$ for all $n\ge 0$ implies $x\in X$.

\begin{proposition}\label{p.primeiro}
    $\displaystyle \widetilde{P}(\varphi_t)= \widetilde{P}_{\rm{hyp}}(\varphi_t)$ for every $t\in\bR$.
\end{proposition}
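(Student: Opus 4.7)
The plan is to establish the two inequalities $\widetilde{P}_{\rm hyp}(\varphi_t) \le \widetilde{P}(\varphi_t)$ and $\widetilde{P}(\varphi_t) \le \widetilde{P}_{\rm hyp}(\varphi_t)$ separately.

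For $\widetilde{P}_{\rm hyp}(\varphi_t) \le \widetilde{P}(\varphi_t)$ I would argue as follows. Let $X \subset J \setminus \Sigma$ be any uniformly expanding repeller. The potential $\varphi_t$ is continuous and bounded on $X$ because $X$ avoids $\Crit$, and $f|X$ is a classical uniformly expanding system; standard thermodynamic formalism on $X$ therefore produces an ergodic equilibrium state $\nu_X$ for $\varphi_t|X$ with $P_{f|X}(\varphi_t) = h_{\nu_X} + \int\varphi_t\,d\nu_X$. When $h_{\rm top}(f|X) > 0$ this $\nu_X$ has positive entropy and is hence non-atomic; extended trivially to $J$ it lies in $\widetilde{\cM}$ and provides the desired bound. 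The remaining case is that $X$ reduces to a single repelling periodic orbit, for which $P_{f|X}(\varphi_t) = -t\chi(p)$; this is handled by embedding $p$ into a non-trivial Cantor repeller $X' \subset J \setminus \Sigma$ via the bridge construction from \cite{GelPrzRam:} referenced in Section~\ref{sec:final-1} (feasible because $\Sigma$ is finite and $p \notin \Sigma$), after which the variational principle gives $-t\chi(p) \le P_{f|X'}(\varphi_t)$, and the previous step closes the bound.

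For the reverse inequality $\widetilde{P}(\varphi_t) \le \widetilde{P}_{\rm hyp}(\varphi_t)$, I would reduce to ergodic $\mu \in \widetilde{\cM}$ by the ergodic decomposition. Being non-atomic, $\mu(\Sigma) = \mu(\Crit) = 0$, and $\chi(\mu) \ge 0$ by Przytycki's inequality. In the main case $\chi(\mu) > 0$, I would invoke a Katok-type horseshoe approximation suited to rational maps (in the style of~\cite{PrzRivSmi:04} and the Pesin-theoretic constructions feeding into the bridges of~\cite{GelPrzRam:}) to produce uniformly expanding Cantor repellers $X_n \subset J$ that stay at a uniform positive distance from $\Sigma \cup \Crit$, with equilibrium states $\mu_n$ satisfying $h_{\mu_n} + \int\varphi_t\,d\mu_n \to h_\mu + \int\varphi_t\,d\mu$; since each $X_n$ is a uniformly expanding repeller in $J \setminus \Sigma$, this yields $\widetilde{P}_{\rm hyp}(\varphi_t) \ge \lim_n P_{f|X_n}(\varphi_t) = h_\mu + \int\varphi_t\,d\mu$. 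The boundary case $\chi(\mu) = 0$ forces $h_\mu = 0$ by Ruelle's inequality, so $h_\mu + \int\varphi_t\,d\mu = 0$, and it suffices to show $\widetilde{P}_{\rm hyp}(\varphi_t) \ge 0$; this is achieved by producing horseshoes in $J \setminus \Sigma$ of arbitrarily small minimal Lyapunov exponent obtained from backward orbits accumulating on $\supp\mu$.

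The principal obstacle is the unboundedness of $\varphi_t = -t\log\lvert f'\rvert$ near $\Crit$: ensuring $\int\varphi_t\,d\mu_n \to \int\varphi_t\,d\mu$ for the approximating equilibria requires uniform tail control near critical points. I would handle this by truncating $\varphi_t$ outside small neighborhoods of $\Crit$ and exploiting $\mu(\Crit) = 0$ to keep the horseshoes $X_n$ at a definite positive distance from $\Crit$, so that on $X_n$ the truncated and untruncated potentials agree. A secondary delicate point is the zero-exponent case, where the Katok approximation machinery is unavailable; the ad hoc construction of near-neutral periodic orbits in $J \setminus \Sigma$ is the more subtle aspect there, since such orbits must exist and be accessible from $\supp\mu$ while avoiding the finite set $\Sigma$.
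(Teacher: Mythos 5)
Your two-inequality split mirrors the paper's. The paper disposes of the difficult direction $\widetilde{P}(\varphi_t) \le \widetilde{P}_{\rm hyp}(\varphi_t)$ by a single citation to Theorem~11.6.1 of \cite{PrzUrb:}, which is the packaged Katok--Pesin horseshoe approximation result whose content your sketch re-derives, and it gets the easy direction $\widetilde{P}_{\rm hyp}(\varphi_t) \le \widetilde{P}(\varphi_t)$ from ``the variational principle.'' Your version of the easy direction is in fact more careful than the paper's one line: you correctly notice that if the repeller $X$ is a finite union of periodic orbits its equilibrium state is atomic and cannot be fed into the definition of $\widetilde{P}$, and you repair this with the bridge construction. (A small inaccuracy: $h_{\rm top}(f|X) > 0$ does not by itself force the equilibrium state of $\varphi_t|_X$ to have positive entropy; what you actually want is that the Gibbs state on an infinite transitive expanding repeller has full support, hence is non-atomic, which gives the same conclusion.)

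There is, however, a genuine gap in your treatment of the boundary case $\chi(\mu) = 0$. You reduce to showing $\widetilde{P}_{\rm hyp}(\varphi_t) \ge 0$ and then assert one can ``produce horseshoes in $J \setminus \Sigma$ of arbitrarily small minimal Lyapunov exponent obtained from backward orbits accumulating on $\supp\mu$.'' This is not justified: since $h_\mu = 0$, Katok's horseshoe theorem does not apply to $\mu$, and there is no general mechanism guaranteeing hyperbolic periodic orbits or repellers with exponent near $0$ just because a non-atomic zero-exponent measure exists --- when $\alpha^- = 0$ the infimum may be attained only by neutral orbits and zero-entropy measures, while hyperbolic orbits in $J \setminus \Sigma$ may have exponent bounded away from zero. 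Moreover, for $t > 0$ a small $\inf$ exponent on a horseshoe does not alone yield $P_{f|X}(\varphi_t) \ge -\e$; one must also control entropy along those small-exponent orbits. The paper does not face this because the theorem it cites is stated for all $\mu \in \widetilde{\cM}$ uniformly, without a case split on $\chi(\mu)$; a blind reconstruction must either reproduce that uniform statement or give a separate argument for the zero-exponent case rather than assert it.
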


\begin{proof}
The inequality $\widetilde{P}(\varphi_t)\ge
\widetilde{P}_{\rm{hyp}}(\varphi_t)$ follows from the variational
principle. On the other hand~\cite[Theorem 11.6.1]{PrzUrb:}
implies that for any $\mu\in\widetilde \cM$ we have $\widetilde{P}_{\rm{hyp}}(\varphi_t)\ge h_\mu(f)+\int_J\varphi_t\,d\mu$  and hence
$\widetilde{P}_{\rm{hyp}}(\varphi_t)\ge \widetilde{P}(\varphi_t)$.
\end{proof}

Before defining the hidden tree pressure, let us recall some
concepts from~\cite{Prz:99},~\cite{PrzRivSmi:04}, and~\cite[Chapter
12.5]{PrzUrb:}. Given $z\in\overline\bC$ and $t\in\bR$, we
consider the \emph{tree pressure} of $\varphi_t$ at $z$ defined by
    \[
        P_{\rm tree}(z,\varphi_t)\eqdef
        \limsup_{n\to\infty}\frac{1}{n}\log \sum_{x\in f^{-n}(z)}\lvert(f^n)'(x)\rvert^{-t}.
    \]
A point $z\in\overline\bC$ is said to be \emph{safe} if
    \[
        z\notin\bigcup_{n=1}^\infty f^n(\Crit)
        \quad\text{ and }\quad
        \lim_{n\to\infty}\frac{1}{n}\log \dist(z,f^n(\Crit))=0,
    \]
where $\dist$ denotes the spherical distance. A point
$z\in\overline\bC$ is said to be \emph{expanding} if there exist
numbers $\Delta>0$ and $\lambda > 1$ such that for all sufficiently large $n$ the map $f^n$ is univalent on $f^{-n}_z(B(f^n(z),\Delta))$ and satisfies
$\lvert (f^n)'(z)\rvert\ge \lambda^n$. Here, for a subset~$U$
of~$\overline{\bC}$ and $z \in U$ we denote by $f^{-n}_z(U)$ the
connected component of~$f^{-n}(U)$
containing~$z$.

We point out that every point in~$\overline{\bC}$ outside a set of
Hausdorff dimension zero is safe and that for each safe point~$z
\in \overline{\bC}$ we have $P_{\rm tree}(z,\varphi_t) =
P(\varphi_t)$, see~\cite{Prz:99,PrzRivSmi:04} and compare
with~\cite[Theorem 3.4]{Prz:99}.

\begin{lemma}\label{lem:exisafe}
   There exists an expanding safe point in $J \setminus \Sigma$.
\end{lemma}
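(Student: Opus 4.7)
The plan is to construct an expanding repeller $X \subset J \setminus \Sigma$, observe that every point of $X$ is expanding in the sense of the preceding definition, and then use the fact that safe points are generic to exhibit a safe point in $X$.

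For the first step, since $\Sigma$ is finite and repelling periodic points are dense in $J$, I can choose a repelling periodic point $p \in J \setminus \Sigma$ together with a small neighborhood $W$ of the orbit of $p$ that is disjoint from $\Sigma$. A standard horseshoe construction near $p$, using two well-separated univalent inverse branches of a sufficiently high iterate $f^N$ inside $W$, produces a compact $f^N$-invariant set $X \subset W \subset J \setminus \Sigma$ on which $f^N$ is uniformly expanding and which is isolated in $W$. Replacing this set by its $f$-orbit I may assume $X$ is $f$-invariant and isolated in a neighborhood $U$ of $X$ disjoint from $\Sigma$. The resulting system $f|_X$ is conjugate to a subshift of finite type on at least two symbols, so $X$ has positive topological entropy and, by the Bowen formula, positive Hausdorff dimension.

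For the second step, every $z \in X$ is expanding: by isolation I can shrink $U$ so that $\bigcap_{n \ge 0} f^{-n}(U) \cap U = X$, and then choose $\Delta > 0$ small enough that for every $z \in X$ and every $n \ge 0$ the component $f_z^{-n}(B(f^n(z),\Delta))$ is contained in $U$; hence this component is a connected subset of $U$ that maps into $U$ under every intermediate iterate, which combined with the uniform hyperbolic structure on $X$ forces $f^n$ to be univalent on it and $\lvert (f^n)'(z)\rvert \ge \lambda^n$ for a constant $\lambda>1$ independent of $z$.

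For the third step, it has been recalled in the paragraph preceding the lemma that the set of non-safe points in $\overline{\bC}$ has Hausdorff dimension zero. Since $\dim_{\rm H} X > 0$, the set $X$ must contain a safe point $z$, and this $z$ is an expanding safe point in $J \setminus \Sigma$. The only delicate step is the first one: one must check that the horseshoe can indeed be placed entirely in $J \setminus \Sigma$, but this is a minor technicality because $\Sigma$ is a finite set and the horseshoe can be localized in an arbitrarily small neighborhood of any chosen repelling periodic point $p \in J \setminus \Sigma$.
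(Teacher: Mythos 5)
Your proof is correct and follows essentially the same strategy as the paper: exhibit a uniformly expanding repeller of positive Hausdorff dimension contained in $J\setminus\Sigma$, note that its points are expanding, and then use the fact (already recorded before the lemma) that the non-safe set has Hausdorff dimension zero. The only real difference is that the paper simply cites~\cite[Lemma~4]{GelPrzRam:} for the existence of such a Cantor repeller, whereas you build a horseshoe from scratch near a repelling periodic point $p\notin\Sigma$; that is a legitimate, self-contained alternative.

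One small point to tighten: the reason the $f$-invariant set you obtain avoids $\Sigma$ is not really that ``the horseshoe can be localized'' near $p$. After you replace the $f^N$-invariant set $X'\subset W$ by its $f$-orbit $X=\bigcup_{j=0}^{N-1}f^j(X')$, the intermediate images $f^j(X')$ need not lie in $W$ at all, so localization alone does not give $X\cap\Sigma=\emptyset$. The correct (and easy) argument uses forward invariance of $\Sigma$: if some $f^j(x')\in\Sigma$ with $x'\in X'$ and $0\le j<N$, then $f^N(x')=f^{N-j}(f^j(x'))\in\Sigma$, while $f^N(x')\in X'\subset W$, contradicting $W\cap\Sigma=\emptyset$. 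Alternatively, you can avoid passing to the $f$-orbit entirely: a safe point $z\in X'$ is already expanding for $f$ itself, because the intermediate iterates $f^j(X')$ are compact sets disjoint from $\Crit$ and the derivative along them is uniformly bounded away from~$0$, so the $f^N$-expansion on $X'$ propagates to $f^n$ for all large $n$.
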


\begin{proof}
    Notice that
    \[
       \{x\in\overline\bC \text{ not safe }\} \subset
     \left(\bigcup_{n=1}^\infty f^n(\Crit)\right)
     \cup
     \left(\bigcup_{\beta\in(0,1)}\bigcap_{n=1}^\infty\bigcup_{k=n}^\infty B(f^k(\Crit),\beta^k)
       \right).
    \]
Since $\Crit$ is finite and $\sum_n\beta^{nt}<\infty$ for any
$\beta\in(0,1)$ and $t>0$, this inclusion implies that the set of
points that fail to be safe has zero Hausdorff dimension. Thus,
the existence of an expanding safe point outside $\overline V$
follows from the existence of uniformly expanding Cantor repellers
outside $\overline V$, for example as derived in~\cite[Lemma
4]{GelPrzRam:}. Note that such repellers always have a positive
Hausdorff dimension by Bowen's formula (see, for example, \cite[Chapter
9.1]{PrzUrb:}).
\end{proof}

Let us now define the hidden tree pressure that is an analogue of the tree pressure,
obtained by considering a restricted tree of preimages.
Given a subset~$V$ of~$J$ and~$z \in J \setminus V$ which is not in the forward orbit of a critical point, we define
\begin{equation}\label{def.tree.1}
    P_n(z,\varphi_t,V)\eqdef
    \frac{1}{n}\log\sum_{x\in f^{-n}(z)\cap J\setminus V}\lvert (f^n)'(x)\rvert^{-t}
\end{equation}
and we consider the \emph{hidden tree pressure} of $\varphi_t$ at $z$ defined by
\begin{equation}\label{def.tree}
   P_{\rm tree}(z,\varphi_t,V) \eqdef \limsup_{n\to\infty} P_n(z,\varphi_t,V).
\end{equation}
Usually the point~$z$ will be expanding safe and the set~$V$ will be a
neighborhood of~$\Sigma$.
Note that after Lemma~\ref{lem:exisafe} there are such~$z$ and~$V$.

\begin{lemma}\label{l.correct}
   If $t \le 0$, $V$ is a sufficiently small neighborhood of~$\Sigma$, and $z \in J \setminus V$ is expanding safe, then the pressure $P_{\rm tree}(z,\varphi_t,V)$ does not depend on~$V$.
\end{lemma}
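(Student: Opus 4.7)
The plan is to compare two nested sufficiently small neighborhoods $V_1\subset V_2$ of $\Sigma$ and show that the corresponding hidden tree pressures agree; a general (not-nested) pair is then handled by applying the nested case to $V_1\cap V_2\subset V_1$ and $V_1\cap V_2\subset V_2$. Since $t\le 0$, every summand $|(f^n)'(x)|^{-t}$ is non-negative, so the inclusion $J\setminus V_2\subset J\setminus V_1$ gives at once $P_{\rm tree}(z,\varphi_t,V_1)\ge P_{\rm tree}(z,\varphi_t,V_2)$. For the reverse direction I would estimate the extra contribution
\[
S_n\eqdef\sum_{x\in f^{-n}(z)\cap(V_2\setminus V_1)}|(f^n)'(x)|^{-t}
\]
and show that $\limsup_n \tfrac1n\log S_n\le P_{\rm tree}(z,\varphi_t,V_2)$. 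Since $f^n(x)=z\notin V_2$, each such $x$ admits a well-defined exit time $m(x)\eqdef\min\{m\ge 1:f^m(x)\notin V_2\}$ in $\{1,\ldots,n\}$, and $y\eqdef f^{m(x)}(x)$ is a preimage of $z$ under $f^{n-m(x)}$ that lies in $J\setminus V_2$.

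Grouping $x$ by the pair $(m,y)$ and applying the chain rule yields
\[
S_n\le\sum_{m=1}^n\sum_{y\in f^{-(n-m)}(z)\cap J\setminus V_2} B_m(y)\,|(f^{n-m})'(y)|^{-t},
\]
where $B_m(y)\eqdef\sum_x|(f^m)'(x)|^{-t}$ sums over those preimages $x$ of $y$ under $f^m$ whose orbit remains in $V_2$ up to time $m-1$ and which themselves lie in the annular region $A\eqdef V_2\setminus V_1$. The key step is to bound $B_m(y)$ by a constant $C=C(V_1,V_2)$ uniformly in $m$, $n$, and $y$. For $V_2$ sufficiently small, near each periodic point $p\in\Sigma$ of period $k_p$ the only inverse branch of $f^{k_p}$ keeping its image inside a small enough neighborhood of $p$ is the local inverse $\phi_p$ fixing $p$; the remaining branches send points immediately out of $V_2$. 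Hence the preimages of $y$ contributing to $B_m(y)$ are precisely the points $\phi_p^j(y)$ with $p\in\Sigma$ and $\phi_p^j(y)\in A$. For expanding $p\in\Sigma_+$ with multiplier $\lambda_p$, Koebe distortion gives $|\phi_p^j(y)-p|\asymp|\lambda_p|^{-j}|y-p|$, so only finitely many $j$, with an upper bound controlled by $\log(r_2/r_1)/\log|\lambda_p|$, are admissible, and each carries weight $|(f^{k_pj})'(\phi_p^j(y))|^{-t}\asymp|\lambda_p|^{|t|j}$, bounded above by $(r_2/r_1)^{|t|}$ along the admissible range.

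Combined with the upper bound
\[
\sum_{y\in f^{-(n-m)}(z)\cap J\setminus V_2}|(f^{n-m})'(y)|^{-t}\le e^{(n-m)(P_{\rm tree}(z,\varphi_t,V_2)+\varepsilon)}
\]
valid for large $n-m$, summing over $m\le n$ will give $S_n\le C'n\,e^{n(P_{\rm tree}(z,\varphi_t,V_2)+\varepsilon)}$; taking the limsup and letting $\varepsilon\to 0$ then finishes the reverse inequality. The hardest part will be the contribution of neutral periodic points $p\in\Sigma_0$: in the parabolic case $\phi_p^j$ contracts only polynomially and in the Cremer case even the existence and distortion estimates for high iterates of the local inverse branch require care. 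The strategy is to choose $V_2$ small enough that the range of $j$ with $\phi_p^j(y)\in A$ remains uniformly bounded, that bounded distortion along this finite range of iterates survives, and that no inverse branch other than $\phi_p$ produces preimages of $y$ that stay in $V_2$ up to the exit time.
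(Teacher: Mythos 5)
Your proof takes a genuinely different route from the paper, and in its present form it has a gap at the crucial step.

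The paper proves Lemma~\ref{l.correct} directly from Lemma~\ref{lem:caf1}: each backward branch of $f^{-n}$ starting at~$z$ and landing at~$x_1\in V_2\setminus V_1$ is \emph{rerouted} by deleting at most~$M$ of its last steps and appending at most~$M+N$ new steps that stay $\delta$-far from~$\Crit$, producing a branch of comparable length landing at some $x_2\notin V_2$; since $t\le 0$ the deletion decreases $|(f^n)'|^{-t}$ at most by a constant factor, the new steps increase it at most by a constant factor, and the multiplicity of the map $x_1\mapsto x_2$ is bounded, so $P_n(z,\varphi_t,V_1)-P_n(z,\varphi_t,V_2)=\mathcal O(n^{-1})$. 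No local analysis near~$\Sigma$ is needed. You instead split the excess $S_n$ by first exit time from $V_2$ and try to bound the ``return factor'' $B_m(y)$ by a constant~$C$ uniformly in~$m$. That uniform bound is the weak point: when $\Sigma_0$ contains a Cremer periodic point~$p$, the backward orbit $\phi_p^j(y)$ of a point $y$ just outside~$V_2$ need not enter~$V_1$ in a number of steps bounded independently of~$y$ (escape times from a fixed annulus around a Cremer point are unbounded, precisely because of the Siegel compacta the paper cites via~\cite{PerMar:97}), and one only controls the multiplier by $|(\phi_p^m)'(y)|\ge e^{-Cmr_2}$, so $B_m(y)\lesssim e^{\lambda m}$ with $\lambda\sim|t|\,r_2$ small but not zero. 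Your proposed remedy — shrink $V_2$ until the range of admissible~$j$ becomes uniformly bounded — does not achieve this in the Cremer case. What \emph{does} work is the weaker estimate $B_m(y)\le A\,e^{\lambda m}$ for arbitrarily small~$\lambda$ (this is essentially Lemma~\ref{l:zero pressure}), which your sum $\sum_m B_m\,e^{(n-m)(P_{\rm tree}(z,\varphi_t,V_2)+\varepsilon)}$ still absorbs \emph{provided} you know $\liminf_n P_n(z,\varphi_t,V_2)>\lambda$; that positive lower bound is another item you would have to supply (one can get it from a fixed expanding repeller disjoint from~$V_2$ as in the first half of the proof of Proposition~\ref{sngk}, without using Lemma~\ref{l.correct}). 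So the decomposition is sound and could be completed along the lines of Lemma~\ref{l:zero pressure}, but as written the uniform bound on $B_m(y)$ is unjustified, and the paper's rerouting argument avoids the issue entirely.
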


To prove the above lemma we need the following technical lemma.

\begin{lemma}\label{lem:caf1}
   For an arbitrary neighborhood~$V$ of~$\Sigma$ and an arbitrary number $\varepsilon>0$ there exists a number  $\delta>0$ and positive integers $N\le M$ such that for every point $x\in J\setminus V$ there exist numbers $0\le i$, $j\le M$ and a point $z\in f^{-j}(f^i(\{x\}))$ such that the set $A\eqdef f^{-N}(\{z\})$ is $\varepsilon$-dense in $J$ and satisfies
    \[
        \dist\Big(\bigcup_{s=0}^{N+j-1} f^s(A),\Crit\Big)\ge \delta.
    \]
\end{lemma}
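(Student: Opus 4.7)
The plan is to produce a single auxiliary open set $U \subset J \setminus V$ whose closure avoids appropriate finite pieces of the forward and backward orbits of~$\Crit$, together with integers $N \le M$ such that $f^{-N}(\{y\})$ is $\varepsilon$-dense in $J$ for every $y \in J$ and $f^M(U) = J$. Once $U$, $N$, $M$ are fixed, for each $x \in J \setminus V$ I obtain the required $z$ as a preimage of $x$ in $U$ under $f^M$, taking $i = 0$ and $j = M$; the two conclusions of the lemma then follow by a direct verification.

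To build $U$, I first invoke uniform topological exactness of $f|_J$ on the compact Julia set to fix $N \ge 1$ with $f^N(B(y, \varepsilon/2) \cap J) = J$ for every $y \in J$, equivalently $f^{-N}(\{y\})$ is $\varepsilon$-dense in $J$ for every $y \in J$. Next, since $\bigcup_{k \ge 0} (f^k(\Crit) \cup f^{-k}(\Crit))$ is countable while $J \setminus \overline V$ is uncountable (here I use that $J$ is perfect and $\Sigma$ is finite), I pick $z_0 \in J \setminus \overline V$ off this countable set; then $\dist(z_0, F) > 0$ for every finite subset $F$ of it. I then jointly choose an open neighborhood $U$ of $z_0$ in $J$ and an integer $M \ge N$ so that $f^M(U) = J$ and $\overline U$ is disjoint from the finite set
\[
F_M \eqdef \Crit \cup \bigcup_{k=1}^{N} f^k(\Crit) \cup \bigcup_{k=0}^{M-1} f^{-k}(\Crit).
\]
The joint existence of such $(U, M)$ is the delicate point: topological exactness provides some $M = M(U)$ with $f^{M(U)}(U) = J$ for every nonempty open $U \subset J$, and one then takes the radius of $U$ small enough that $\overline U$ avoids $F_{M(U)}$; since $z_0$ lies off the entire countable critical orbit (forward and backward), these constraints can be balanced to yield a consistent choice of $(U,M)$.

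Once $U$ and $M$ are fixed, for $x \in J \setminus V$ I take $z \in U \cap f^{-M}(\{x\})$ using $x \in J = f^M(U)$, and set $i = 0$, $j = M \le M$. The $\varepsilon$-density of $A \eqdef f^{-N}(\{z\})$ is immediate from the choice of $N$. For the distance bound, observe that $f^s(A) \subset f^{-(N-s)}(\overline U)$ for $0 \le s \le N$ and $f^s(A) = \{f^{s-N}(z)\} \subset f^{s-N}(\overline U)$ for $N \le s \le N+j-1$, whence
\[
\bigcup_{s=0}^{N+j-1} f^s(A) \subset \bigcup_{k=1}^{N} f^{-k}(\overline U) \cup \bigcup_{k=0}^{M-1} f^k(\overline U),
\]
a union of finitely many compact sets, each disjoint from $\Crit$ thanks to $\overline U \cap F_M = \emptyset$ (indeed, $c \in f^{-k}(\overline U) \cap \Crit$ would force $f^k(c) \in \overline U \cap f^k(\Crit)$, and similarly for $f^k(\overline U)$). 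I take $\delta > 0$ to be the minimum of the finitely many resulting positive distances. The principal obstacle is precisely the circular interdependency between the construction of $U$ and the value of $M$; it is overcome by the flexibility provided by the uncountability of $J \setminus \overline V$ combined with topological exactness of $f|_J$.
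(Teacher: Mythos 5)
Your proof hinges on a joint choice of a single neighborhood $U$ of a single point $z_0$ together with an integer $M$ satisfying both $f^M(U)=J$ and $\overline{U}\cap F_M=\emptyset$, where $F_M\supset\bigcup_{k=0}^{M-1}f^{-k}(\Crit)$. You flag this as ``the delicate point'', and it is exactly where the argument has a genuine gap. Choosing $z_0$ outside the countable set $\bigcup_{k\ge0}\bigl(f^k(\Crit)\cup f^{-k}(\Crit)\bigr)$ guarantees only that $\dist(z_0,F_M)>0$ for each \emph{fixed} $M$; it gives no control on how this distance compares with the radius $\rho$ for which $M$ iterates of $B(z_0,\rho)\cap J$ cover $J$. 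Shrinking $\rho$ drives the covering time $M$ up, which enlarges $F_M$, and when $\Crit\cap J\neq\emptyset$ (the only nontrivial case, since otherwise $F_M\cap J=\emptyset$) the sets $f^{-k}(\Crit)$ become dense in $J$ and accumulate at $z_0$, so $\dist(z_0,F_M)$ decays as $M$ grows. Whether the loop closes is a quantitative race between these two rates, and ``uncountability of $J\setminus\overline V$ plus topological exactness'' provides no mechanism to win it. In fact your requirements impose the very tight condition that every $c\in\Crit\cap J$ enters the forward iterates $f^k(\overline U)$ for the \emph{first} time exactly at the covering time $M$ (because $c\notin f^k(\overline U)$ for $k<M$ while $c\in J=f^M(\overline U)$), simultaneously for all such $c$; nothing in your argument arranges this.

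The paper's proof is structured precisely so that this circularity never arises. It does not posit a single global $(U,M)$: instead, for each $x\in J\setminus V$ it constructs a good auxiliary point $y=y(x)\in f^{-j}(f^i(x))$ whose first $j$ forward iterates avoid $\Crit$ and which avoids $\bigcup_{s=1}^{N}f^s(\Crit)$ (here the freedom $i>0$, absent from your scheme where $i=0$ always, is essential to handle $x$ lying on a post-critical orbit), verifies pointwise positivity of the continuous, monotone-in-$M$ auxiliary functions $\Phi_M$ on $J\setminus\Sigma$, and then extracts a uniform $M$ and $\delta$ by a Dini-type compactness argument on the compact set $J\setminus V$. Because no pullback of a fixed small set is ever required to cover $J$, the covering time and the critical-orbit avoidance are decoupled.
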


\begin{proof}
By the locally eventually onto property of~$f$ on~$J$ there is an integer~$N \ge 1$ such that for each~$z \in J$ the set~$f^{-N}(z)$ is $\varepsilon$-dense in~$J$. We put
   \[
   C(N)\eqdef\bigcup_{s=1}^{N} f^s(\Crit).
   \]
For each integer $M\ge0$ let $\Phi_M\colon J\to \bR$ be defined by
\begin{multline*}
   \Phi_M(x)\eqdef \max\Big\{\min\big\{\dist(y,C(N)),
                   \dist\big(\bigcup_{s=0}^{j-1}f^s(y),\Crit\big)\big\}\colon\\
       0\le i,j\le M, \,y\in f^{-j}(f^i(x))\Big\}.
\end{multline*}
We will show that for each~$x \in J \setminus \Sigma$ there is an
integer $M(x) \ge 0$ such $\Phi_{M(x)}(x) > 0$. Since for each $M
\ge 0$ the function~$\Phi_M$ is continuous and for each~$x \in J$
the sequence $\{ \Phi_M(x) \}_{M = 0}^{\infty}$ is non-decreasing,
it follows that there is a number $M\ge N$ so that $\Phi_M$ is
strictly positive on $J\setminus \Sigma$. This will imply the desired assertion with
   \[
       \delta\eqdef \inf\{\Phi_M(x)\colon x\in J\setminus V\}\cdot
               (\sup\,\lvert f'(x)\rvert)^{-N}.
   \]
We distinguish three cases:\\
\textbf{1)} If $x\in J\setminus \Sigma$ is not in the forward orbit of a critical point then $\Phi_0(x)>0$.\\
\textbf{2)} If $x\in J\setminus \Sigma$ is in the forward orbit of a citical point that is not pre-periodic then there exists a number $i=i(x)\in\{0,\ldots,\card C(N)\}$ such that $f^i(x)$ is disjoint from $C(N)$. Hence, we obtain that $\Phi_{\card C(N)}(x)>0$.\\
\textbf{3)} If $x\in J\setminus \Sigma$ is in the forward orbit of a pre-periodic critical point then, there is~$i$ and an infinite backward trajectory starting at~$f^i(x)$ that is disjoint from $\Crit$ and in particular this backward trajectory is longer than $\card C(N)$. Hence, we can choose numbers $i=i(x)$, $j=j(x)\ge 0$ and a point $y=y(x)\in f^{-j}(f^i(x))$ such that $y$ is not in the forward orbit of a critical point and such that for each $s\in\{0,\ldots,j-1\}$ we have $f^s(y)\notin\Crit$. In particular, we have $y\notin C(N)$. Thus, if we put
   \[
       M(x)\eqdef \max\{i(x),j(x)\},
   \]
then $\Phi_{M(x)}(x)>0$.
\end{proof}

\begin{proof}[Proof of Lemma~\ref{l.correct}]
Let $V_1$, $V_2$ be two neighborhoods of $\Sigma$. Without loss of
generality we can assume that $V_1\subset V_2$. By Lemma
\ref{lem:caf1}, every backward branch of $f^{-n}$ starting at $z$
and ending at some point $x_1\in
V_2\setminus V_1$ can be modified to end at some $x_2\notin V_2$.
The modification involves only removing at most $M$ last steps,
that decreases $\lvert(f^n)'(x)\rvert^{-t}$ at most by a constant
factor because $t \le 0$,
and replacing them by at most $M+N$ steps, which stay in a
uniformly bounded from below distance from critical points. Hence
we conclude that $P_n(z,\varphi_t,V_1)$ and $P_n(z,\varphi_t,V_2)$
differ at most by $\cO(n^{-1})$. This proves the lemma.
\end{proof}

We denote by $\Dist g|_Z\eqdef \sup_{x,y\in Z}\lvert g'(x)\rvert/\lvert g'(y)\rvert$ the maximal distortion of a map $g$ on a set $Z$. We establish one preliminary approximation result.

\begin{proposition}\label{p.schuh}
Given~$t \le 0$, a sufficiently small neighborhood $V$ of $\Sigma$ and an expanding safe point $z\in J\setminus \Sigma$, for every $\varepsilon>0$ there exists a uniformly expanding repeller $X\subset J\setminus \Sigma$ such that
   \[
       P_{f|X}(\varphi_t)\ge P_{\rm tree}(z,\varphi_t,V)-\varepsilon.
   \]
\end{proposition}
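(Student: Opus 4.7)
The plan is to approximate $P_{\rm tree}(z,\varphi_t,V)$ from below by the topological pressure of a uniformly expanding Cantor repeller in $J\setminus\Sigma$, constructed as the attractor of an iterated function system of inverse branches of some iterate $f^{n+m}$ at the safe expanding point~$z$. This is the hidden analogue of the classical construction identifying tree pressure with hyperbolic pressure (cf.~\cite{Prz:99,PrzRivSmi:04}), with the extra requirements that only preimages outside~$V$ are used and that the resulting repeller avoid~$\Sigma$.

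First, by the $\limsup$ definition of $P_{\rm tree}(z,\varphi_t,V)$, I choose~$n$ arbitrarily large so that
\[
  S_n \eqdef
  \sum_{x\in f^{-n}(z)\cap (J\setminus V)} \lvert(f^n)'(x)\rvert^{-t}
  \ge e^{n(P_{\rm tree}(z,\varphi_t,V)-\varepsilon/4)}.
\]
Using that~$z$ is safe and expanding, I fix a radius $r=r_n>0$ (possibly shrinking subexponentially in~$n$) such that $B(z,2r)$ avoids $\bigcup_{k=1}^n f^k(\Crit)$, every inverse branch of $f^n$ based at~$z$ is univalent on $B(z,2r)$ with Koebe-bounded distortion on $B(z,r)$, and $B(z,2r)$ is disjoint from~$\Sigma$ as well as from the finite set $\bigcup_{k=0}^{N}f^{-k}(\Sigma)\setminus\{z\}$ for an auxiliary level~$N$ to be fixed below. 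Then, using topological exactness of~$f$ on~$J$, I pick $m=m(r)$ with $f^m(B(z,r/4)\cap J)\supset J$, and for each preimage $x_i\in f^{-n}(z)\cap (J\setminus V)$ I select a preimage $y_i\in f^{-m}(x_i)\cap B(z,r/4)$ with the additional property that $\lvert(f^m)'(y_i)\rvert$ is uniformly bounded above and below and $y_i\notin\Sigma$; Lemma~\ref{lem:caf1} supplies the uniform control needed to effect this selection simultaneously over all $x_i$.

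Defining $\psi_i$ to be the branch of $f^{-(n+m)}$ on $B(z,r/2)$ sending~$z$ to~$y_i$, Koebe gives $\diam(\psi_i(B(z,r/2)))\asymp r/\lvert(f^{n+m})'(y_i)\rvert$, and combined with $y_i\in B(z,r/4)$ this forces $\psi_i(B(z,r/2))\subset B(z,r/2)$; distinct branches of $f^{-(n+m)}$ always have disjoint images, so $\{\psi_i\}$ is a bona fide iterated function system with full-shift symbolic structure. Its attractor $X\subset B(z,r/2)\cap J$ is a Cantor set on which $f^{n+m}$ is uniformly expanding by the Koebe trap, and the $f$-orbit $\widetilde X\eqdef\bigcup_{k=0}^{n+m-1}f^k(X)$ is a uniformly expanding repeller contained in $J\setminus\Sigma$ by the choice of~$r$. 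Bounded distortion and the Bernoulli structure then yield
\[
  P_{f^{n+m}|X}\bigl(-t\log\lvert(f^{n+m})'\rvert\bigr)
  =\log\sum_i\lvert(f^{n+m})'(y_i)\rvert^{-t}+O(1),
\]
which, via $\lvert(f^{n+m})'(y_i)\rvert\asymp\lvert(f^n)'(x_i)\rvert$, equals $\log S_n+O(1)$. Dividing by $n+m$ and letting $n\to\infty$ with $m$ fixed gives $P_{f|\widetilde X}(\varphi_t)\ge P_{\rm tree}(z,\varphi_t,V)-\varepsilon$, as required.

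The main obstacle I expect is the coordinated selection of the intermediate preimages~$y_i$: they must simultaneously lie in $B(z,r/4)\setminus\Sigma$, have uniformly comparable derivatives $\lvert(f^m)'(y_i)\rvert$, and collectively carry enough of the weight~$S_n$ that the pressure estimate survives the passage from~$x_i$ to~$y_i$. This is the step at which the hidden structure intervenes non-trivially, and precisely the reason Lemma~\ref{lem:caf1} is formulated for arbitrary $x\in J\setminus V$: it supplies a single~$m$ and a uniform lower bound~$\delta$ on derivatives that apply simultaneously to every preimage of the initial tree, thereby propagating the restriction "outside~$V$" from the tree of preimages to the final repeller.
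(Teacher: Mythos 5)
Your overall strategy is the same one the paper uses: anchor an iterated function system at the safe expanding point $z$, route the selected inverse branches of $f^{-n}(z)$ back into the base ball through the critical-point-avoiding bridges of Lemma~\ref{lem:caf1}, and compare the resulting hyperbolic pressure to the partial sum $S_n$. However, there are two genuine gaps.

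The central one concerns your claim that Lemma~\ref{lem:caf1} ``supplies a single $m$ and a uniform lower bound $\delta$ on derivatives that apply simultaneously to every preimage.'' This misreads the lemma: it produces, for each $x\in J\setminus V$, integers $i=i(x),j=j(x)\le M$ and a bridge of length $N+j(x)-i(x)$ that \emph{varies with $x$} in the range $[N-M,N+M]$. Consequently the intermediate points $y_i$ you want cannot all sit at the same depth $m$ with controlled derivative, and the family $\{\psi_i\}$ you write down does not consist of inverse branches of one fixed iterate $f^{-(n+m)}$ --- so it is not an IFS with full-shift structure as claimed. The paper's proof confronts exactly this: it partitions $f^{-n}(z)\setminus V$ into the classes $P_k=\{z_n: n+N+j(z_n)-i(z_n)=k\}$ with $k\in\{n+N-M,\dots,n+N+M\}$, builds a separate repeller $Z_k^\ast$ under $f^{k+\ell}$ for each $k$, and then uses pigeonhole over the $2M+1$ classes to find one that carries a $\tfrac{1}{2M+1}$-fraction of $S_n$. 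This grouping-plus-pigeonhole step is not a cosmetic refinement; without it your pressure estimate $P_{f^{n+m}|X}\ge\log S_n+O(1)$ is unjustified.

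The second issue is the passage ``dividing by $n+m$ and letting $n\to\infty$ with $m$ fixed.'' In your setup $m=m(r_n)$ is the time needed for $B(z,r_n/4)$ to cover $J$, and $r_n$ must shrink with $n$ (to stay clear of $\bigcup_{k\le n}f^k(\Crit)$, which the safe condition only controls subexponentially). So $m$ cannot be held fixed; at best it grows like $\log(1/r_n)\sim\log n$, and you would need to say so and check $m(r_n)/n\to 0$. The paper sidesteps this entirely by anchoring the IFS at the \emph{forward} iterate $f^\ell(z)$ inside a ball $B(f^\ell(z),\Delta)$ of fixed radius, so its covering time $m$ depends only on $\Delta$ and is genuinely constant, while the shrinking ball $B(z,\kappa n^{-\alpha})$ is reached through the extra $\ell\sim\log n$ steps along the expanding orbit, which are negligible after dividing by the total return time. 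Your construction conflates the safe ball with the IFS base; it can probably be repaired, but as written the final limit argument does not go through.
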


\begin{proof}
We start by recalling the construction used in~\cite[Proposition~2.1]{PrzRivSmi:04} to prove an analogous statement for~$t \ge 0$ and then we modify it using Lemma~\ref{lem:caf1} to prove the proposition.

As~$z$ is expanding safe, there exist $\Delta>0$, $C_0 > 0$ and $\lambda>1$ so that for all~$\ell \ge 1$ the map $f^\ell$ is univalent on $V_\ell\eqdef f^{-\ell}_z(B(f^\ell(z),\Delta))$ and $\lvert(f^\ell)'(z) \rvert \ge C_0 \lambda^\ell$.
Hence, in particular, the distortion $\Dist f^\ell|_{V_\ell}$ is bounded from above uniformly in~$\ell$ by some number~$C_1 > 1$.
Given $r<\Delta/2$, let $\ell=\ell(r)$ be the smallest integer satisfying $\lvert (f^\ell)'(z)\rvert\ge C_1\Delta/r$. Hence, with the above, we have $f^{-\ell}(B(f^\ell(z),\Delta))\subset B(z,r)$ and $\ell\le C''-C'\log r$, where $C'=1/\log\lambda$ and $C''=(\log\lambda+\log C_0^{-1}C_1\Delta)C'$.
Let $m\ge1$ be such that $f^m(B(y,\Delta/2))=J$ for any $y\in J$.

Let us choose positive constants $\alpha$, $\kappa$ and $n\ge m$ large enough so that $\kappa n^{-\alpha}<\Delta/2$ and that for every $j=1$, $\ldots$, $2n$ for every point $z_j\in f^{-j}(z)$ on the component $f^{-j}_{z_j}(B(z,\kappa n^{-\alpha}))$ the map $\displaystyle f^j$ is univalent and satisfies
\begin{equation}\label{e.mimo}
   f^{-j}_{z_j}(B(z,\kappa n^{-\alpha}))\subset B(z_j,\Delta/2).
\end{equation}
Note that with this choice we have for large~$n$,
$$ \ell\eqdef\ell(\kappa n^{-\alpha})\le C''-C'\log\kappa+\alpha\, C'\log n
\ll
n - m. $$
As $m\le n$ and $f^m(B(f^\ell(z),\Delta)))$ covers $J$, we can conclude that for every preimage $z_n\in f^{-n}(z)$ there exists a component~$W_{z_n}$ of $f^{-m}\big(f^{-n}_{z_n}(B(z,\kappa n^{-\alpha}))\big)$ contained in~$B\big(f^\ell(z),\Delta\big)$.
The map $f^{m+n}$, and hence~$f^{m+n+\ell}$, is univalent on~$W_{z_n}$.
Thus, the map
\begin{equation}\label{e.defF}
   F\eqdef f^{m+n+\ell}
   \colon \bigcup_{z_n \in f^{-n}(z)}
W_{z_n} \to
   B\big( f^\ell(z),\Delta\big)
\end{equation}
\begin{figure}[h]
\begin{minipage}[c]{\linewidth}
\centering
\begin{overpic}[scale=.4
 ]{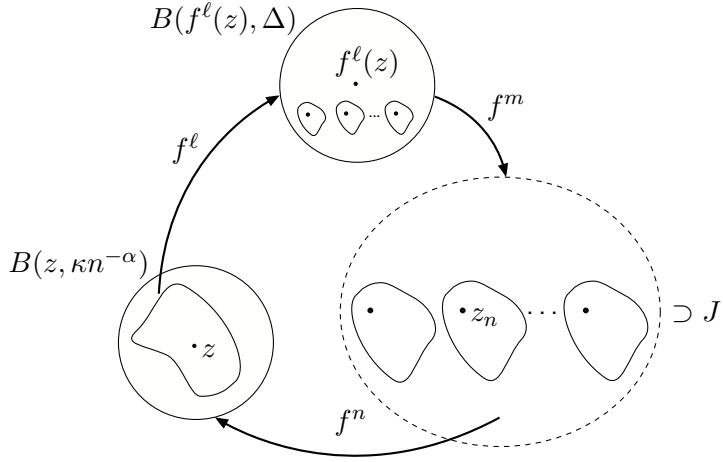}
      \put(6,79){$B(f^\ell(z),\Delta)$}
  \put(40,71){$f^\ell(z)$}
   \put(15.5,18.5){$z$}
   \put(102,25){$\supset J$}
   \put(-20,34){$B(z,\kappa n^{-\alpha})$}
   \put(65,25){$z_n$}
   \put(10,55){$f^\ell$}
   \put(40,5){$f^n$}
   \put(68,63){$f^m$}
 \end{overpic}
\end{minipage}
\caption{Construction of the uniformly expanding repeller $Z$}
\end{figure}
\noindent
has no critical points, and $Z\eqdef \bigcap_{k=1}^\infty F^{-k}(B(f^\ell(z),\Delta))$ is a uniformly expanding repeller with respect to~$F$.

Let us now slightly modify the construction of $Z$ by~\eqref{e.defF} and ignore all those backward branches $f^{-(m+n+\ell)}$ that correspond to a point $z_n\in V$.
Given $\kappa = \Delta/2$, let us consider the positive integers $N\le M$ and the number $\delta>0$ provided by Lemma~\ref{lem:caf1}.
Then, by Lemma~\ref{lem:caf1}, for each point $z_n\in f^{-n}(z) \cap V$ there exist numbers $j(z_n)$, $i(z_n)\le M$, a point $z_n^\ast \in f^{-(N+j(z_n))}(f^{i(z_n)}(z_n))$ in $B(f^\ell(z),\Delta/2)$.
Any such branch stays $\delta$-far from~$\Crit$.
Note that the distortion of~$f^{n + N + j(z_n) - i(z_n) + \ell}$ on
$$ W_{z_n}^\ast \eqdef f_{z_n^\ast}^{-(n + N + j(z_n) - i(z_n))}(B(z,\kappa n^{-\alpha})) $$
is bounded by a constant~$D > 1$ independent of~$z_n$ and~$n$.
Given an integer~$k\in\{n + N - M, \ldots,n + N + M\}$, put
$$ P_k \eqdef \{ z_n \in f^{-n}(z) \setminus V \colon n + N + j(z_n) - i(z_n) = k \}. $$
Note that for distinct~$z_n$ and~$z_n'$ in~$P_k$ the sets~$W_{z_n}^\ast$ and~$W_{z_n'}^\ast$ are disjoint.
Setting
\[
   F_k^\ast \eqdef
   f^{k + \ell} \colon \bigcup_{z_n \in P_k} W_{z_n}^\ast \to B(f^\ell(z), \Delta),
\]
the sets
\[
   Z_k^\ast \eqdef
   \bigcap_{j = 1}^{\infty} (F_k^\ast)^{-j} \left( B(f^{\ell}(z), \Delta)\right)
   \text{ and }
   X_k^* \eqdef
   \bigcup_{j = 0}^{k + \ell - 1} f^j(Z_k^\ast)
\]
are uniformly expanding repellers for~$F_k^\ast$ and~$f$, respectively.
Both of these sets are disjoint from~$\Sigma$ by construction.
On the other hand there, letting
\[
   L \eqdef \min \left\{ 1, \inf_{J \setminus B(\Crit, \delta)} \lvert f'\rvert \right\}
   \quad\text{ and }\quad
   \widetilde{L} \eqdef \max \left\{ 1, \sup_J \,\lvert f'\rvert \right\}
\]
we have
\[\begin{split}
   & P_{f|X_k^\ast}(- t \log \,\lvert f'\rvert)
   \ge
       \frac{1}{k + \ell} P_{F_k^\ast | Z_k^\ast}\big(- t \log |F_k^\ast\rvert\big)\\
   &\ge
   \frac{1}{n + N + M + \ell}
   \log \left( D^{t} \sum_{z_n \in P_k} \lvert (f^{k + \ell})'(z_n^\ast)\rvert^{-t} \right)
   \\
   &\ge
   \frac{1}{n + N + M + \ell} \log \left( D^{t} (C_1^{-1} C_0 \lambda^{\ell})^{-t}
   L^{-t(N + M)} \widetilde{L}^{tM} \sum_{z_n \in P_k} \lvert(f^n)'(z_n)\rvert ^{-t}       \right).
\end{split}\]
Since $\bigcup_{k = n + N - M}^{n + N + M} P_k = f^{-n}(z) \setminus V$, there is~$k$ such that
\[
   \sum_{z_n \in P_k} |(f^n)'(z_n)|^{-t}
   \ge
   \frac{1}{2M + 1} \sum_{z_n \in f^{-n}(z) \setminus V} \lvert (f^n)'(z_n)\rvert^{-t}.
\]
Hence, if we put $\widetilde{D} \eqdef D^{-1}C_1^{-1} C_0\, \lambda^\ell L^{N + M} \widetilde{L}^{- M}$, then
\begin{multline*}
   P_{f|X_k^\ast}(- t \log \,\lvert f'\rvert)
   \\ \ge
   \frac{1}{n + N + M + \ell} \log \left( \widetilde{D}^{-t} \frac{1}{2M + 1}
   \sum_{z_n \in f^{-n}(z) \setminus V} \lvert(f^n)'(z_n)\rvert^{-t} \right).
\end{multline*}
Since~$N$, $M$, $\widetilde{D}$ are independent of~$n$ and $\ell \le C''-C'\log\kappa+\alpha \, C'\log n$, we obtain the desired assertion by taking a sufficiently large~$n$.
\end{proof}

We are now ready to prove one further equivalence.
\begin{proposition} \label{sngk}
Given a sufficiently small neighborhood~$V$ of $\Sigma$ and an
expanding safe point $z\in J\setminus V$, for every $t \le
0$ we have
\[
   P_{\rm tree}(z,\varphi_t, V)
=
\widetilde{P}_{\rm{hyp}}(\varphi_t)
\]
\end{proposition}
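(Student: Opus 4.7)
The inequality $\widetilde P_{\rm hyp}(\varphi_t) \ge P_{\rm tree}(z, \varphi_t, V)$ is immediate from Proposition~\ref{p.schuh}: for each $\varepsilon > 0$ it provides a uniformly expanding repeller $X \subset J \setminus \Sigma$ with $P_{f|X}(\varphi_t) \ge P_{\rm tree}(z, \varphi_t, V) - \varepsilon$, so the definition of $\widetilde P_{\rm hyp}$ as a supremum yields the bound after letting $\varepsilon \to 0$. For the reverse inequality, the plan is to shadow preimages inside an arbitrary uniformly expanding repeller $X \subset J \setminus \Sigma$ by preimages of $z$ lying close to $X$. By Lemma~\ref{l.correct} we may shrink $V$ without altering $P_{\rm tree}(z, \varphi_t, V)$, and since $X$ is compact and disjoint from $\Sigma$ this allows us to assume $\dist(X, V) \ge \delta_0$ for some $\delta_0 > 0$.

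Fix a point $w \in X$. The standard repeller structure provides constants $\varepsilon_0 > 0$, $\lambda > 1$ and a distortion bound $D_X$ such that, for every $n \ge 1$ and every $x \in f^{-n}(w) \cap X$, there is a univalent inverse branch $\phi_{x,n}$ of $f^n$ on $B(w, \varepsilon_0)$ with $\phi_{x,n}(w) = x$, image contained in the ball of radius $C \lambda^{-n}$ about $x$, and derivative satisfying $|(\phi_{x,n})'(\tilde w)| \asymp |(f^n)'(x)|^{-1}$ uniformly in $\tilde w$; moreover, for such an isolated repeller one has
\[
P_{f|X}(\varphi_t) = \limsup_{n \to \infty} \frac{1}{n} \log \sum_{x \in f^{-n}(w) \cap X} |(f^n)'(x)|^{-t}.
\]
By topological exactness of $f$ on $J$ there exist $M \ge 1$ and $y \in B(w, \varepsilon_0/2)$ with $f^M(y) = z$; since $z$ is safe, the preimages of $z$ avoid the orbit of $\Crit$, so $y$ can be taken with $|(f^M)'(y)|$ a finite positive constant.

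Setting $y_x \eqdef \phi_{x, n}(y)$ produces distinct preimages of $z$ under $f^{n+M}$, and the uniform contraction $\lambda^{-n}$ places $y_x \in B(x, \delta_0/2) \subset J \setminus V$ for all sufficiently large $n$. Bounded distortion together with the fixed factor $|(f^M)'(y)|^{-t}$ (which is finite since $t \le 0$ and $y$ avoids $\Crit$) gives
\[
\sum_{x' \in f^{-(n+M)}(z) \cap (J \setminus V)} |(f^{n+M})'(x')|^{-t}
\ \ge\ C' \sum_{x \in f^{-n}(w) \cap X} |(f^n)'(x)|^{-t}
\]
for some $C' > 0$ independent of $n$. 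Taking logarithms, dividing by $n+M$, and letting $n \to \infty$ gives $P_{\rm tree}(z, \varphi_t, V) \ge P_{f|X}(\varphi_t)$; taking the supremum over admissible $X$ completes the proof. The main technical obstacle is the careful bookkeeping that ensures the shadow points $y_x$ lie in $J \setminus V$ and remain distinct, which combines the exponential contraction of the inverse branches on the repeller with the fixed positive distance $\delta_0$ between $X$ and $V$; the hypothesis $t \le 0$ enters only through Lemma~\ref{l.correct} and to control the finite factor $|(f^M)'(y)|^{-t}$.
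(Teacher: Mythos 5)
Your proof is correct and takes essentially the same route as the paper: both directions are handled identically, with Proposition~\ref{p.schuh} giving $\widetilde P_{\rm hyp}(\varphi_t)\ge P_{\rm tree}(z,\varphi_t,V)$, and the reverse inequality obtained by fixing a repeller $X$ disjoint from~$\Sigma$, using Lemma~\ref{l.correct} to push $V$ away from $X$, finding (via topological exactness and the safety of $z$) a point $y$ near $X$ with $f^M(y)=z$, and pulling $y$ back along the shadowing inverse branches of the repeller to produce preimages of $z$ lying outside $V$ with comparable Jacobians. The paper organizes this as two separate pressure comparisons ($P_{f|X}\le P_{\rm tree}(y,\cdot)$ by shadowing, then $P_{\rm tree}(y,\cdot)\le P_{\rm tree}(z,\cdot)$ by restricting the $P_{m+n}(z,\cdot)$ sum), while you fold them into a single estimate on the sum over the shadow points $y_x$; the underlying mechanism is the same.
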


\begin{proof}
By Proposition~\ref{p.schuh},  we have $\widetilde P_{\rm hyp}(\varphi_t)\ge P_{\rm tree}(z,\varphi_t,V)$.

In view of Lemma~\ref{l.correct}, to prove the inequality $\widetilde P_{\rm hyp}(\varphi_t)\le P_{\rm tree}(z,\varphi_t,V)$ it is enough to show that for each  expanding  repeller~$X$ that does not intersect~$\Sigma$, there is a neighborhood~$V_0$ of~$\Sigma$ such that $P_{f|X}(\varphi_t) \le P_{\rm tree}(z,\varphi_t,V_0)$.
Notice that for every $x\in X$ and every neighborhood~$V_0$ of~$\Sigma$ disjoint from~$X$ we have,
\[
   P_{f|X}(\varphi_t)\leq P_{\rm tree}(x,\varphi_t,V_0).
\]
This follows easily considering the contribution of the backward branches of~$f^{-n}(x)$ contained in~$X$ in the sum in~\eqref{def.tree.1}.

Let~$Y$ be a neighborhood of~$X$ on which~$f|_Y$ is uniformly expanding.
Thus, there is a constant~$C > 1$ and for every~$y \in Y$ there is~$x \in X$ such that for every integer~$n \ge 1$ and every~$x' \in f^{-n}(x)$ there is~$y' \in f^{-n}(y)$ shadowing~$x'$ and so that
$$ C^{-1} < \lvert (f^n)'(y')\rvert / \lvert (f^n)'(x')\rvert < C. $$
It follows that for each neighborhood~$V_0$ disjoint from~$Y$ we have~$P_{f|_X}(\varphi_t) \le P_{\rm tree}(y, \varphi_t, V_0)$.

By the eventually onto property of~$f$ on~$J$, we have $f^m(Y)=J$ for some $m \ge 1$.
Fix $y\in Y \cap f^{-m}(z)$ and let~$V_0$ neighborhood of~$\Sigma$ disjoint from~$Y$.
Then we have
\[\begin{split}
   P_{m+n}(z, \varphi_t,V_0)
&=
   \frac{1}{m+n}\log\sum_{x\in f^{-(m+n)}(z)\cap J\setminus V_0}\lvert (f^{m+n})'(x)\rvert^{-t}
\\ &\ge
   \frac{1}{m+n}\log\lvert (f^{m})'(y)\rvert^{-t}
+ \frac{n}{m+n} P_n(y, \varphi_t, V_0).
\end{split}\]
This shows $P_{\rm tree}(z,\varphi_t,V_0)\ge P_{\rm tree}(y,\varphi_t,V_0)$ and completes the proof of the inequality $P_{\rm tree}(z,\varphi_t,V_0) \ge \widetilde{P}_{\rm hyp}(\varphi_t)$.
\end{proof}

\section{$\sigma$-finite conformal measures}\label{sec:bal}

Recall that $f$ is a rational map~$f$ of degree at least~$2$.
If~$f$ is exceptional, then~$\Sigma$ is the maximal finite and
forward invariant subset of~$J$ satisfying $f^{-1}(\Sigma)
\setminus \Sigma \subset \Crit$. Otherwise~$\Sigma =
\emptyset$.

In the following proposition we adapt the classical method by Patterson and Sullivan to construct a $e^{\widetilde{P}(\varphi_t) - \varphi_t}$-conformal measure on~$J$ for each~$t < 0$. For a map without a phase transition in the negative spectrum or for a map with a phase transition in the negative spectrum at some parameter~$t_- < t$, we obtain a finite conformal measure supported on~$J$, as in part~$1$ of
Proposition~\ref{p:known conformal}.
For a map having a phase transition in the negative spectrum at some parameter $t_- > t$ this construction gives us a conformal measure outside~$\Crit$, which is finite outside each neighborhood of~$\Sigma$.
Recall that by part~$2$
of Proposition~\ref{p:known conformal}, existence of phase
transition implies that there does not exist a \emph{finite}
$e^{\widetilde{P}(\varphi_t) - \varphi_t}$\nobreakdash-conformal
measure for $t<t_-$.

\begin{proposition} \label{fsfjl}
    Let~$f$ be a rational function of degree at least~$2$.
For each $t < 0$ there exists a Borel measure on~$J$ that is $e^{\widetilde{P}(\varphi_t)-\varphi_t}$\nobreakdash-conformal outside~$\Crit$, finite outside any neighborhood of~$\Sigma$, gives zero measure to $\Sigma \cup \Crit$ and whose support is equal to~$J$.
\end{proposition}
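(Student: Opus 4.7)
The plan is to construct the desired measure by adapting the classical Patterson--Sullivan recipe to the restricted preimage tree introduced in Section~\ref{sec:hidden}. Fix $t<0$ and set $s_0\eqdef\widetilde{P}(\varphi_t)$. By Lemma~\ref{lem:exisafe} choose an expanding safe point $z\in J\setminus\Sigma$, and take a sufficiently small open neighborhood~$V$ of~$\Sigma$ with $z\notin\overline V$ so that Lemma~\ref{l.correct} and Proposition~\ref{sngk} yield $P_{\rm tree}(z,\varphi_t,V)=s_0$.

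For $s>s_0$ form the Poincar\'e series
\[
\Theta(s)\;\eqdef\;\sum_{n\ge 0}e^{-ns}\!\!\sum_{x\in f^{-n}(z)\cap(J\setminus V)}\!|(f^n)'(x)|^{-t},
\]
which converges. Apply the Patterson trick: multiply the $n$-th term by a sub-exponential factor $b_n$ with $b_{n+1}/b_n\to 1$, chosen so that the modified series $\widetilde\Theta(s)$ diverges at $s=s_0$ while still converging for $s>s_0$. Set
\[
\nu_s\;\eqdef\;\widetilde\Theta(s)^{-1}\sum_{n\ge 0}b_n e^{-ns}\!\!\sum_{x\in f^{-n}(z)\cap(J\setminus V)}\!|(f^n)'(x)|^{-t}\,\delta_x,
\]
a probability measure with atoms in $J\setminus V$, and let~$\mu$ be any weak-$*$ accumulation point of~$\nu_s$ as $s\searrow s_0$. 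Standard manipulation of the Patterson sums, using $b_{n+1}/b_n\to 1$ and $\widetilde\Theta(s)\to\infty$, yields that~$\mu$ is a Borel probability measure supported in~$\overline{J\setminus V}$ satisfying
\[
\mu(f(A))=\int_A e^{s_0-\varphi_t}\,d\mu
\]
for every special Borel $A\subset J\setminus\Crit$ with $A\cup f(A)\subset J\setminus V$. Moreover $\mu(\Sigma)=0$ since $\Sigma\subset V$ lies outside $\supp\mu$, and $\mu(\Crit)=0$ since the atomic weight $|(f^n)'(x)|^{-t}$ vanishes at any $x$ whose orbit meets $\Crit$ (using $t<0$), while any mass accumulating at a critical point~$c$ is controlled by $|f'(x)|^{-t}\to 0$ as $x\to c$.

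To extend the conformality across $V\setminus\Sigma$ and obtain a measure on all of~$J$, define~$\nu$ by pullback along univalent inverse branches. For every $y\in J\setminus(\Sigma\cup\bigcup_{n\ge 0}f^n(\Crit))$ pick an integer $n(y)\ge 0$ and a univalent inverse branch $g_y$ of $f^{n(y)}$ on a neighborhood $U_y$ of $y$ with $g_y(y)\in J\setminus V$, and set $\nu|_{U_y}$ equal to the pushforward by $f^{n(y)}|_{g_y(U_y)}$ of $e^{n(y)s_0}|(f^{n(y)})'|^{t}\,d\mu$. The conformality of~$\mu$ on $\overline{J\setminus V}$ (in the limited sense above) together with the independence of the hidden tree pressure on the choice of~$V$ (Lemma~\ref{l.correct}) guarantees that different admissible choices of $(n(y),g_y)$ yield the same local density, so the local pieces glue into a Borel measure~$\nu$ on $J\setminus(\Sigma\cup\bigcup_n f^n(\Crit))$; extend by zero on the remaining countable set.

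By construction $\nu$ is $e^{s_0-\varphi_t}$-conformal outside~$\Crit$. Its restriction to any compact $K\subset J\setminus\Sigma$ is a finite sum of pullbacks of the probability measure~$\mu$ by bounded Jacobians, hence finite; this yields finiteness of~$\nu$ outside every neighborhood of~$\Sigma$. The support of $\nu$ equals $J$: the support of $\mu$ is $\overline{J\setminus V}$ since preimages of $z$ are dense in $J\setminus V$ by the topological exactness of $f$, the pullback construction covers $J\setminus(\Sigma\cup\bigcup_n f^n(\Crit))$, and points of $\Sigma$ lie in the closure of this set. Finally $\nu(\Crit)=\nu(\Sigma)=0$: $\nu(\Crit)=0$ inherits from $\mu(\Crit)=0$ via the pullback formula; $\nu(\Sigma)=0$ follows from the extension by zero, which is forced because every preimage of a point of~$\Sigma$ lies in $\Sigma\cup\Crit$, so no univalent inverse branch from~$\Sigma$ reaches $J\setminus V$. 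The main technical obstacle is the consistency of the pullback-extension step: a priori, different univalent inverse branches reaching the same point could produce different local densities, and to verify compatibility one must combine the conformality of~$\mu$ already established on $\overline{J\setminus V}$ with the $V$-independence of the hidden tree pressure.
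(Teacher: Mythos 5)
Your proposal follows the paper in its first step (Patterson--Sullivan applied to the $V$-restricted preimage tree of a safe point $z$), but it then departs substantially: you try to extend the resulting measure $\mu$ from $J\setminus V$ to all of $J\setminus\Sigma$ by pulling back along univalent inverse branches, whereas the paper instead re-runs the Patterson--Sullivan construction with the \emph{same} normalization constant $M_{t,p}$ for shrinking neighborhoods $W\subset V$, obtains the automatic compatibility $\mu_{t,W',p}|_{J\setminus\overline W}=\mu_{t,W,p}|_{J\setminus\overline W}$ (equation~\eqref{koss}), proves a uniform mass bound (Lemma~\ref{lsls}, which in turn needs Lemma~\ref{l:zero pressure} to control branches lingering near the neutral part $\Sigma_0$, since Siegel compacta prevent $\bigcap_iV_i$ from shrinking to $\Sigma$), and passes to a diagonal weak-$*$ limit.

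The gap in your extension step is real and not cosmetic. You need to know that for $y\in V\setminus\Sigma$, two admissible choices $(n_1,g_1)$, $(n_2,g_2)$ with $g_1(y),g_2(y)\in J\setminus V$ produce the same local density. The only ingredients you offer are the conformality of $\mu$ restricted to special sets $A$ with $A\cup f(A)\subset J\setminus V$, and the $V$-independence of the hidden tree pressure (Lemma~\ref{l.correct}). Neither suffices. The conformality of $\mu$ does relate the two pullbacks to $\mu(B)$ when $B\subset J\setminus V$ (both then equal $\mu(B)$), but for a small set $B\subset V$ near $y$ it says nothing: $\mu(B)=0$ (since $\supp\mu\subset\overline{J\setminus V}$), yet $\int_{g_1(B)}e^{n_1s_0}|(f^{n_1})'|^t\,d\mu$ and $\int_{g_2(B)}e^{n_2s_0}|(f^{n_2})'|^t\,d\mu$ can both be strictly positive and there is no functional equation linking $g_1(B)$ to $g_2(B)$ when the two branches are not nested (e.g.\ two distinct single-step preimages of $y$ both landing in $J\setminus V$). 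And Lemma~\ref{l.correct} is a statement about exponential growth rates of a scalar pressure; it does not encode any measure-level transfer between branches. So the ``gluing into a Borel measure'' step is asserted, not proved. The fact that your route never invokes Lemma~\ref{l:zero pressure} or Lemma~\ref{lsls} — the technical heart of the paper's proof, designed precisely to handle the exceptional set — is itself a sign that the hard part has been skipped. To repair this you would either have to establish conformality of $\mu$ across $V$ directly (which is essentially what you are trying to avoid), or do as the paper does and build compatibility into the construction by fixing the normalization and shrinking the excluded neighborhood.
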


\begin{proof}
As a first step we will apply the Patterson-Sullivan method while considering only those inverse branches outside a given neighborhood~$V$ of~$\Sigma$.
We obtain in this way a measure that is $e^{\widetilde{P}(\varphi_t) -
\varphi_t}$\nobreakdash-conformal outside the set~$\overline{V} \cup
f^{-1}(\overline{V}) \cup \Crit$. We will obtain a measure
$e^{\widetilde{P}(\varphi_t) - \varphi_t}$\nobreakdash-conformal
outside~$\Crit$ by taking the limit of the measures obtained by
repeating this construction with~$V$ replaced by smaller and
smaller neighborhoods.

We start with the following lemma.
Recall that $\Sigma_0$ denotes the set of neutral periodic points in~$\Sigma$ plus its preimages.

\begin{lemma}\label{l:zero pressure}
   Given $t<0$, for every~$\lambda > 0$ there exist positive numbers $r$ and $A$  such that for every $x \in J$ and every integer $\ell \ge 1$ we have
   \[
       \sum\, \lvert (f^\ell)'(y)\rvert^{-t}
       \le A \,e^{\ell \lambda},
   \]
   where the sum is taken over all $y \in f^{-\ell}(x)$ satisfying $f^j(y) \in B(\Sigma_0, r)$ for every $j \in \{0, \ldots, \ell - 1 \}$.
\end{lemma}

\begin{proof}
Let~$r > 0$ be sufficiently small so that for each periodic point~$p \in \Sigma_0$ of minimal period~$n \ge 1$ we have
\[
   \sup_{y\in B(p, r)} \lvert (f^n)'(y)\rvert^{-t}
   \le
   e^{\lambda n}.
\]
Hence, there is some constant~$A_0 > 0$ such for every integer~$\ell \ge 1$ and every point~$y$ satisfying $f^j(y) \in B(\Sigma_0, r)$ for every~$j \in \{ 0, \ldots, \ell - 1 \}$ we have
\[
   \lvert (f^\ell)'(y)\rvert^{-t} \le A_0\, e^{\ell \lambda}.
\]
Reducing~$r$ if necessary, we may assume that for every~$p \in \Sigma_0$ the map~$f$ is injective on~$B(p, r)$ and the set~$f(B(p, r))$ is disjoint from $B\big(\Sigma_0 \setminus \{ f(p) \}, r\big)$.
So for each~$p \in \Sigma_0$ and $w \in B(f(p), r)$ there is at most one point~$w' \in B(p, r)$ such that~$f(w') = w$.
By induction we can conclude that for each~$\ell \ge 1$, $x \in J$, and~$p$, $p' \in \Sigma_0$ there is at most one point~$y \in f^{-\ell}(x)$ such that
\[
   y \in B(p, r)\quad \text{ and }\quad f^{\ell - 1}(y) \in B(p', r).
\]
Thus the assertion follows with $A \eqdef A_0 (\card \Sigma_0)^2$.
\end{proof}

We now continue in proving the proposition.
Let~$z \in J \setminus \Sigma$ be an expanding safe point as provided by Lemma~\ref{lem:exisafe}.
Given $\lambda = \widetilde{P}(\varphi_t)/3$, let~$r$ and~$A$ be the positive numbers provided by Lemma~\ref{l:zero pressure}.
Reducing~$r > 0$ if necessary, we can assume that $z\notin B(\Sigma, r)$ and by Proposition~\ref{sngk} we can assume that $V \eqdef B(\Sigma,r)$ satisfies $P_{\rm tree}(z,\varphi_t,V) = \widetilde{P}(\varphi_t)$.

There exists a sequence $\{b_n\}_{n\ge 1}$ of positive reals such that
\begin{equation}\label{lzor}
   \sum_{n=1}^\infty b_ne^{-np}\sum_{x\in f^{-n}(z)\cap J\setminus V}
       \lvert (f^n)'(x)\rvert^{-t}
       \begin{cases}
       <\infty& \text{ if }p>\widetilde{P}(\varphi_t),\\
       =\infty&\text{ if }p\le \widetilde{P}(\varphi_t),
       \end{cases}
\end{equation}
and $\lim_{n\to\infty}b_n/b_{n+1}=1$ (see, for example,~\cite[Lemma 3.1]{DenUrb:91}).
Given $t<0$ and $p>\widetilde{P}(\varphi_t)$, let us define
\[
M_{t,p}\eqdef \sum_{n=1}^\infty b_ne^{-np} \sum_{x\in
f^{-n}(z)\cap J\setminus V}\lvert (f^n)'(x)\rvert^{-t}
\]
and for each neighborhood~$W$ of~$\Sigma$ define the measure
\begin{equation}\label{def.mu.dip}
\mu_{t,W,p}\eqdef
\frac{1}{M_{t,p}}\sum_{n=1}^\infty b_ne^{-np}
\sum_{x\in f^{-n}(z)\cap J\setminus W}\lvert (f^n)'(x)\rvert^{-t}\,\delta_x,
\end{equation}
where $\delta_x$ denotes the Dirac measure supported at $x$.

Observe that the measure $\mu_{t,V,p}$ is probabilistic for any $p>\widetilde{P}(\varphi_t)$.
If $W\subset V$ is a neighborhood of~$\Sigma$ the measure $\mu_{t,W,p}$ is not probabilistic in general, however it is finite as shown in the following lemma.
Let us denote by $\lvert\mu\rvert$ the total mass of a measure $\mu$, that is, let $\lvert\mu\rvert=\mu(J)$.

\begin{lemma} \label{lsls}
   For every neighborhood $W$ of $\Sigma$ contained in $V$ there is a positive   constant $C(W)$ such that for every $p > \widetilde{P}(\varphi_t)$ we have
\[
    1 \le \lvert\mu_{t,W,p}\rvert \le C(W).
\]
\end{lemma}
\begin{proof}
Since by assumption~$W \subset V$, we have~$\lvert\mu_{t,W,p}\rvert \ge \lvert\mu_{t,V,p}\rvert = 1$.

It only remains to prove the upper bound.
Put
\begin{equation}\label{e.asasb}
   V_1\eqdef V, \quad\text{ and for $i \ge 1$ put }\quad
   V_{i+1}\eqdef V_i \cap f^{-1}(V_i).
\end{equation}
Note that $V_{i+1}$ is the subset of $V$ consisting of all points that under forward iteration do not leave $V$ for at least $i$ steps.
In particular,
\begin{equation}\label{e.oldlem}
   f(V_i\setminus V_{i+1}) \subset \overline{\bC}\setminus V_i
\end{equation}
for all $i\geq 1$.

We will first consider a simple case and assume that we have~$V_{i_0}\subset W$ for some integer~$i_0 \ge 1$. We establish an upper bound for $\lvert \mu_{t,V_{i_0},p}\rvert$ and hence for $\lvert\mu_{t,W,p}\rvert$.
Observe first that for every $i\ge1$ we have
\[
   \lvert\mu_{t,V_{i+1},p}\rvert =
   \mu_{t,V_{i+1},p}(V_i\setminus V_{i+1}) + \mu_{t,V_{i+1},p}(J\setminus V_i)
   \le \mu_{t,V_{i+1},p}(V_i\setminus V_{i+1}) + \lvert\mu_{t,V_i,p}\rvert.
\]
By~\eqref{e.oldlem} for any point $x\in V_i\setminus V_{i+1}$
we have $f(x)\in \overline \bC\setminus V_i$. Hence, we can
estimate
\[\begin{split}
&M_{t,p}\cdot\mu_{t,V_{i+1},p}(V_i\setminus V_{i+1})\\
&=\sum_{n=1}^\infty b_{n}\,e^{-np}\sum_{x\in f^{-n}(z)\cap V_i\setminus V_{i+1}}
    \lvert (f^{n})'(x)\rvert^{-t}\\
&\leq
e^{-p} \deg f \, \sup_J\, \lvert f'\rvert^{-t}
\sum_{n=1}^\infty  b_{n}e^{-(n-1)p}
   \sum_{y\in f^{-(n-1)}(z)\cap J\setminus V_i}
   \lvert (f^{n-1})'(y)\rvert^{-t}\\
&\le
e^{-p} \deg f \, \sup_J\, \lvert f'\rvert^{-t}
\left(b_1 +
\max_{k\ge 2}\frac{b_k} {b_{k-1}}
   \sum_{n=1}^\infty b_n \, e^{-np}
   \sum_{x\in f^{-n}(z)\cap J\setminus V_i} \lvert (f^n)'(x)\rvert^{-t}
\right) \\
& =
e^{-p} \deg f \, \sup_J\, \lvert f'\rvert^{-t}
\left(b_1 +
\max_{k\ge 2}\frac{b_k} {b_{k-1}}
   M_{t,p}\cdot\lvert \mu_{t,V_i,p}\rvert
\right).
\end{split}\]
Since $\lim_{k\to\infty}b_k/b_{k+1}=1$, we have $ \max_{k\ge 2}b_k/b_{k-1} < \infty$.
So, if we put $C_0 = \deg f \cdot \max_{k\ge 2}b_k/b_{k-1}$ and $C_1 = \deg f \cdot b_1 \, M_{t, p}^{-1}$, we obtain
\[
\lvert\mu_{t,V_{i+1},p}\rvert
\le C_1 e^{-p} \, \sup_J\, \lvert f'\rvert^{-t} + \lvert\mu_{t,V_i,p}\rvert
   \left( 1 + C_0 \, e^{-p} \, \sup_J\, \lvert f'\rvert^{-t}
   \right).
\]
Now let~$C\eqdef 1 + C_1 C_0^{-1}$ and $C' \eqdef C_0$. Since $\lvert \mu_{t,V_1,p}\rvert=1$, we obtain by induction in~$i$ that
\begin{equation}\label{e:bound for V_i}
       \lvert \mu_{t,V_{i_0},p}\rvert
       \le C \left(1+C' \, e^{-p} \, \sup_J \,\lvert f'\rvert^{-t}\right)^{i_0-1}.
\end{equation}
Finally, recalling that $\lvert \mu_{t,W,p} \rvert \le \lvert \mu_{t,V_{i_0},p}\rvert $, this  proves the lemma in this first simple case that we considered.

Naturally, if $\Sigma=\Sigma_+$ then we can choose $V$ in such a way that $\bigcap_{i = 1}^{\infty} V_i = \Sigma$ and then it would be enough to consider the above case in which $V_i$ is eventually contained in $W$. However, if $\Sigma_0\ne\emptyset$, that is, if $\Sigma$ contains a neutral periodic point in~$J$ then this is not possible by the existence of Siegel compacta~\cite[Theorem 1]{PerMar:97}.

Let us now consider the general case. Recall that $V=B(\Sigma,r)$.
Let $W\subset V$ be an arbitrary neighborhood of~$\Sigma$.
Certainly we can take~$i \ge 1$ sufficiently large such that~$V_i\cap B(\Sigma_+,r)\subset W$. Increasing~$i$ if necessary, we can assume that for every integer~$k \ge i$ we have~$b_{k + 1}/b_k \le \exp(\widetilde{P}(\varphi_t)/3)$.
For each $x\in f^{-n}(z)\cap J\setminus W$ one of the following two cases can occur: Either a)  $x\notin V_i$ or b) $x\in V_i\setminus W$, and hence
\[
   M_{t,p}\cdot\lvert\mu_{t,W,p}\rvert
   \le M_{t,p}\cdot\lvert\mu_{t,V_i,p}\rvert
       +\sum_{n=1}^\infty\sum_{y\in f^{-n}(z)\cap J\cap V_i\setminus W}
            b_ne^{-np}\lvert (f^{n})'(y)\rvert^{-t}.
\]
In evaluating the latter term observe that to each point $x\in f^{-n}(z)\setminus V_i$ we may find some branch of preimages determined by a point $y\in f^{-\ell}(x)$ satisfying $f^j(y)\in V_i\setminus W$ and hence $f^j(y)\in B(\Sigma_0,r)$ for every $j\in\{0, \ldots, \ell-1\}$. However, by Lemma~\ref{l:zero pressure}, given any $x\in V_i$ the contribution of all such branches can be estimated by
\[\begin{split}
   \sum_{y\in f^{-\ell}(x)} b_{n+\ell}\,e^{-(n+\ell)p}
   &\lvert (f^{n+\ell})'(y)\rvert^{-t} \\
   &= b_n\, e^{-np} \lvert (f^n)'(x)\rvert^{-t} \cdot
       \frac{b_{n+\ell}}{b_n} e^{-\ell p}
       \sum_{y\in f^{-\ell}(x)} \lvert (f^{\ell})'(y)\rvert^{-t}\\
   &\le
       b_n e^{-np} \lvert (f^n)'(x)\rvert^{-t}
       \cdot e^{-\ell p}\max_{k\ge i}\frac{b_{k+\ell}}{b_k} \,A\, e^{\ell\lambda},
\end{split}\]
where each sum is taken over all $y$ such that $f^j(y)\in B(\Sigma_0,r)$ for every $j\in\{0, \ldots, \ell-1\}$.
Thus, summing over all such branches that could occur, by our previous choice of $i$ and $\lambda$ we can estimate
\[\begin{split}
   \lvert\mu_{t,W,p}\rvert
   &\le
   \lvert \mu_{t,V_i,p}\rvert
   \left( 1 +
   \sum_{\ell = 1}^{\infty} \Big( e^{-\ell p}
       \max_{k \ge i} \Big(\frac{b_{k + 1}}{b_k}\Big)^\ell
       A\,e^{\ell\lambda}\Big)\right)\\
   &\le
   \lvert\mu_{t,V_i,p}\rvert
       \left( 1 +
           A\, \sum_{\ell = 1}^{\infty}
           e^{-\ell p} e^{2\ell\widetilde{P}(\varphi_t)/3}\right)\\
   &\le
   \lvert \mu_{t,V_i,p}\rvert
   \left( 1 +
           A\, \sum_{\ell = 1}^{\infty}
           e^{- \ell \widetilde{P}(\varphi_t)/3} \right).
\end{split}\]
Note that $\widetilde P(\varphi_t)>0$.
Together with~\eqref{e:bound for V_i} this completes the proof of the lemma.
\end{proof}
\begin{lemma}
Given a neighborhood~$W$ of~$\Sigma$ contained in~$V$, as $p\searrow \widetilde{P}(\varphi_t)$ there exists a non-zero finite measure that is a weak* accumulation
point of the family of measures $\{\mu_{t,W,p}\colon
p>\widetilde{P}(\varphi_t) \}$.  Furthermore, each such measure is
$e^{\widetilde{P}(\varphi_t) - \varphi_t}$-conformal
outside the set~$\overline{W}\cup f^{-1}(\overline{W})\cup {\rm
Crit}$.
\end{lemma}

\begin{proof}
First observe that, by Lemma~\ref{lsls} the total mass of any of
the measures in $\{\mu_{t,V_i,p}\colon p>\widetilde{P}(\varphi_t)
\}$ is uniformly bounded from above and below  by some positive
constant. Hence this family of measures is relatively compact in
the weak* topology and thus possesses a non-zero and finite
accumulation point proving the first claim.

Following the construction in~\cite[Section 3]{DenUrb:91}, we have for every special set~$A$ disjoint from~$W \cup f^{-1}(W) \cup {\rm Crit}$
    \begin{equation}\label{uusp}\begin{split}
    \mu_{t,W,p}(f(A))
    &=\frac{1}{M_{t,p}}
    \sum_{n=1}^\infty \sum_{y\in  f(A)\cap f^{-n}(z)} b_n e^{S_n\varphi_t(y)-np}\\
    &=\frac{1}{M_{t,p}}
    \sum_{n=1}^\infty \sum_{x\in A\cap f^{-(n+1)}(z)} b_n e^{S_n\varphi_t(f(x))-np}\\
    &=\frac{1}{M_{t,p}}
    \sum_{n=1}^\infty \sum_{x\in A\cap f^{-(n+1)}(z)} b_n e^{S_{n+1}\varphi_t(x)-(n+1)p}
        e^{p-\varphi_t(x)}.
    \end{split}\end{equation}
Thus,
    \[\begin{split}
    &\Delta_A(t,W,p)\eqdef
    \Big\lvert \mu_{t,W,p}(f(A)) - \int_Ae^{\widetilde{P}(\varphi_t)-\varphi_t}\,d\mu_{t,W,p}\Big\rvert\\
    &=\frac{1}{M_{t,p}}\Big\lvert
    \sum_{n=1}^\infty \sum_{x\in A\cap f^{-(n+1)}(z)} e^{S_{n+1}\varphi_t(x)-(n+1)p}
        e^{-\varphi_t(x)} \left[b_ne^p-b_{n+1}e^{\widetilde{P}(\varphi_t)}\right]\\
    &\phantom{=\frac{1}{M_{t,p}}\Big\lvert}
        - b_1\sum_{x\in A\cap f^{-1}(z)}e^{\widetilde{P}(\varphi_t)-p}\Big\rvert   \\
    &\le\frac{1}{M_{t,p}}\sum_{n=1}^\infty\sum_{x\in A\cap f^{-(n+1)}(z)}
        b_{n+1}\Big\lvert \frac{b_n}{b_{n+1}}-e^{\widetilde{P}(\varphi_t)-p}\Big\rvert
        e^{p-\varphi_t(x)}
        e^{S_{n+1}\varphi_t(x)-(n+1)p}\\
    &\phantom{=\frac{1}{M_{t,p}}\Big\lvert}
        + \frac{1}{M_{t,p}}b_1 \deg f \cdot e^{\widetilde{P}(\varphi_t) - p}.
    \end{split}\]
Recall that, by the choice of $\{b_n\}_{n \ge 1}$
in~\eqref{lzor}, we have $\lim_{n\to\infty}b_n/b_{n+1}=1$ and
$\lim_{p\searrow \widetilde{P}(\varphi_t)}M_{t,p}=\infty$. Hence,
we obtain $\lim_{p\searrow
\widetilde{P}(\varphi_t)}\Delta_A(t,W,p)=0$ uniformly in $A$. The
assertion now follows like in~\cite[Section 3]{DenUrb:91} (see
also Section~12.1 or Lemma~12.5.5 and Remark~12.5.6
in~\cite{PrzUrb:}). This proves the lemma.
\end{proof}

We are now prepared to finish the proof of the proposition. Note
that in~\eqref{def.mu.dip} we use the same normalization factor
$M_{t,p}$ for all measures $\mu_{t,W,p}$ for any neighborhood $W$. Hence given
$p>\widetilde{P}(\varphi_t)$ for any pair of neighborhoods~$W$ and~$W'$ of~$\Sigma$ such that~$W' \subset W \subset V$ we have
\begin{equation}\label{koss}
   {\mu_{t,W',p}}|_{J\setminus \overline{W}}
   = {\mu_{t,W,p}}|_{J\setminus \overline{W}}.
\end{equation}
Using a diagonal argument we can
conclude that, as $p \searrow \widetilde{P}(\varphi_t)$ and
$\rho \to 0$, there exists a weak* accumulation measure $\nu_t$ of
the family
\[
   \{ \mu_{t,B(\Sigma, \varepsilon),p} \colon t > \widetilde{P}(\varphi_t),
       \varepsilon \in (0, r) \}
\]
and that~$\nu_t$ is $e^{\widetilde P(\varphi_t)-\varphi_t}$-conformal outside
$\Sigma \cup\Crit$. Replacing~$\nu_t$ by the restricted measure
$\nu_t|_{J\setminus(\Sigma\cup\Crit)}$, if necessary we can assume
that~$\nu_t$ does not give weight to~$\Sigma \cup \Crit$ and hence that~$\nu_t$ is conformal outside~$\Crit$.

Lemma~\ref{lsls} and~\eqref{koss} together imply that then~$\nu_t$ is finite outside each neighborhood of~$\Sigma$.

Finally, the fact that the support of~$\nu_t$ is equal to~$J$ follows from the property that~$f$ is locally eventually onto on~$J$.
This finishes the proof of Proposition~\ref{fsfjl}.
\end{proof}

\section{Proof of the main result}\label{sec:final}

In this section we prove Theorem~\ref{main1}.
In Section~\ref{sec:final-1} we make use of the bridges construction in~\cite{GelPrzRam:} to prove the lower bound, which follows along the same lines as in \cite[Sections 2.2, 2.3, 5]{GelPrzRam:}.
We point out that, after a careful observation, in fact it applies without any changes to our present more general setting.
In Section~\ref{sec:final-1} we also give another application of the bridges construction (Lemma~\ref{help} and its Corollary~\ref{def}), which is used in Section~\ref{sec:final-3}.
The upper bound is shown in Section~\ref{sec:final-2}, where in the case~$t < 0$ we use the $\sigma$-conformal measure given by Proposition~\ref{fsfjl}.
We complete the proof of Theorem~\ref{main1} in Section~\ref{sec:final-3} by showing that the upper Lyapunov exponent of each point in~$J \setminus \Sigma$ is at most~$\widetilde{\alpha}^+$.
\subsection{Lower bound}\label{sec:final-1}

We refer the reader to~\cite{GelPrzRam:} for all the notation in this subsection.

We call a point $x\in J$ \emph{non-immediately post-critical} if there exists some preimage branch $x_0=x=f(x_1)$, $x_1=f(x_2)$, $\ldots$ that is dense in $J$ and disjoint from $\Crit$.
There are at most finitely many non-immediately post-critical points.
On the other hand, it is easy to see that each periodic point not in~$\Sigma$ is non-immediately post-critical.
It follows that every uniformly expanding set disjoint from~$\Sigma$ contains at least one non-immediately post-critical point.

A set $\Lambda$ is called $f$-\emph{uniformly expanding Cantor repeller} (ECR) if it is a uniformly expanding repeller and limit set of a finite graph directed system satisfying the strong separation condition with respect to $f$.

The following proposition generalizes~\cite[Proposition 1]{GelPrzRam:}.
Given an integer~$a \ge 1$ and a function~$g$ defined on~$J$ put
\[
   S_a(g) \eqdef g + g \circ f + \cdots + g \circ f^{a - 1}.
\]

\begin{proposition}\label{p.asympt}
    There exists a sequence $\{a_m\}_{m \ge 1}$ of positive integers and a sequence $\{ \Lambda_m \}_{m \ge 1}$ of subsets of~$J \setminus \Sigma$, such that for each~$m$ the set~$\Lambda_m$ is $f^{a_m}$-invariant and uniformly expanding topologically transitive set, in such a way that for every $t\in\mathbb{R}$ we have
    \[
        \widetilde{P}(\varphi_t) = \lim_{m\to\infty} \frac 1 {a_m}
        P_{f^{a_m}|\Lambda_m}(S_{a_m}\varphi_t)
        =
        \sup_{m \ge 1} \frac 1 {a_m}
        P_{f^{a_m}|\Lambda_m}(S_{a_m}\varphi_t)\,.
    \]
\end{proposition}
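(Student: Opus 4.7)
The plan is to combine Proposition~\ref{p.primeiro} with the bridges construction of \cite[Sections~2.2, 2.3, 5]{GelPrzRam:}, taking care that every object produced stays inside the open set $J \setminus \Sigma$. The proposition follows once we have the inequality in both directions; the lower bound is the one requiring work.

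For the lower bound direction, I would first invoke Proposition~\ref{p.primeiro} to extract, for each $m \ge 1$, a compact $f$-invariant isolated uniformly expanding subset $X_m \subset J \setminus \Sigma$ with $P_{f|X_m}(\varphi_t) \ge \widetilde{P}(\varphi_t) - 1/m$. Inside $X_m$ I would then pick a non-immediately post-critical periodic point $p_m$ (any periodic point outside $\Sigma$ is non-immediately post-critical, as noted at the start of Section~\ref{sec:final-1}) and apply the bridges construction: choose an iterate $a_m$, select finitely many short cylinders of $X_m$ of return level $a_m$, and glue them together with bridges obtained as shadowing inverse branches of $f^{a_m}$ anchored near $p_m$. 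This yields an $f^{a_m}$-invariant topologically transitive uniformly expanding Cantor repeller $\Lambda_m$, with the standard distortion and Bowen-type counting estimates giving
\[
\tfrac{1}{a_m} P_{f^{a_m}|\Lambda_m}(S_{a_m}\varphi_t) \ge P_{f|X_m}(\varphi_t) - 1/m.
\]
The observation that makes this work in the exceptional setting is that $X_m$ is a compact subset of the open set $J \setminus \Sigma$, so there exists a neighborhood $U_m$ of $X_m$ with $\overline{U_m} \cap \Sigma = \emptyset$; because the bridges are built from inverse branches whose image can be kept inside any prescribed neighborhood of $X_m$, we automatically obtain $\Lambda_m \subset U_m \subset J \setminus \Sigma$.

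For the upper bound, observe that $\Lambda_m$ is uniformly expanding, hence disjoint from $\Crit$. Combined with $\Lambda_m \cap \Sigma = \emptyset$ and the defining property $f^{-1}(\Sigma) \setminus \Sigma \subset \Crit$, this forces $f(\Lambda_m) \cap \Sigma = \emptyset$, and by iteration the compact set $Y_m \eqdef \bigcup_{i=0}^{a_m - 1} f^i(\Lambda_m)$ is $f$-invariant, uniformly expanding, and contained in $J \setminus \Sigma$. An Abramov-type identity gives $P_{f|Y_m}(\varphi_t) = \tfrac{1}{a_m} P_{f^{a_m}|\Lambda_m}(S_{a_m}\varphi_t)$, and Proposition~\ref{p.primeiro} bounds this by $\widetilde{P}_{\rm hyp}(\varphi_t) = \widetilde{P}(\varphi_t)$. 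Putting the two bounds together yields the two claimed equalities (the supremum is attained up to $\varepsilon$ simultaneously with the limit).

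The main obstacle, and the only point of difference from \cite{GelPrzRam:}, is checking that the bridges construction admits a preassigned neighborhood constraint, so that $\Lambda_m$ can be forced to lie in $U_m$. This reduces to the distortion bound for inverse branches of $f^k$ defined on disks staying uniformly far from $\Crit$, plus the fact that we can make the bridges as short as we like relative to $a_m$ so that their contribution to the pressure is negligible. Once this observation is made, the rest of \cite[Sections~2.2, 2.3, 5]{GelPrzRam:} applies verbatim, as no argument there relies on non-exceptionality beyond the possibility of placing the base repeller outside the exceptional set.
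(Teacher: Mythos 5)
Your overall plan (Proposition~\ref{p.primeiro} plus the bridges construction of \cite{GelPrzRam:}) matches the paper's, and your upper-bound argument — using the exceptionality condition $f^{-1}(\Sigma)\setminus\Sigma\subset\Crit$ plus $\Lambda_m\cap\Crit=\emptyset$ to conclude that $Y_m=\bigcup_{i=0}^{a_m-1}f^i(\Lambda_m)$ stays in $J\setminus\Sigma$, followed by an Abramov identity and Proposition~\ref{p.primeiro} — is correct. However, there is a genuine gap in the lower bound: the proposition requires a \emph{single} sequence $\{(\Lambda_m,a_m)\}$ that works for \emph{every} $t\in\mathbb{R}$ simultaneously, but your choice of $X_m$ (hence of $\Lambda_m$) is tied to one value of $t$. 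Proposition~\ref{p.primeiro} only says that for \emph{each fixed} $t$ one can find an expanding repeller in $J\setminus\Sigma$ whose pressure at that $t$ is within $\varepsilon$ of $\widetilde{P}(\varphi_t)$; it does not produce one repeller that is simultaneously near-optimal for a whole range of parameters, and a refinement $\Lambda_m\subset X_m$ via the ECR lemma certainly cannot exceed $P_{f|X_m}(\varphi_t)$ at any $t$. The paper closes this gap by a separate step: for $N$ and $\varepsilon$ fixed, it chooses near-optimal ECRs for a grid of $t$-values in $(-N,N)$ and \emph{merges} them into a single topologically transitive $f^n$-ECR $\Lambda\subset J\setminus\Sigma$ using \cite[Lemma~2]{GelPrzRam:}; since the merged set contains each ingredient, its pressure dominates each of theirs, and continuity in $t$ then gives $\frac1n P_{f^n|\Lambda}(\varphi_t)\ge\widetilde P(\varphi_t)-\varepsilon$ uniformly over $t\in(-N,N)$. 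Your proof never invokes this merging step, so it only establishes the weaker, $t$-dependent version of the statement.

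A secondary point: the mechanism by which the bridges avoid $\Sigma$ is not really that they can be squeezed into a prescribed neighborhood $U_m$ of $X_m$ — that description is adequate when refining a single repeller, but it fails exactly where you would need it for the missing merging step, since a bridge joining two distant ECRs necessarily leaves any small neighborhood of either. The relevant observation (implicit in the paper's use of \cite[Lemma~2]{GelPrzRam:}) is that a bridge is an orbit segment $x,f(x),\dots,f^k(x)=p$ along a $\Crit$-free backward orbit of a non-immediately post-critical point $p\notin\Sigma$; since $\Sigma$ is forward invariant, no point of this segment can lie in $\Sigma$, and shrinking the surrounding cylinders keeps the merged ECR inside $J\setminus\Sigma$. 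Incorporating this and the merging step would make your argument complete and essentially identical to the paper's.
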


\begin{proof}
Recall the definition of the hidden hyperbolic pressure $\widetilde P_{\rm hyp}(\varphi_t)$ in~\eqref{def.hiddhyp}. By Propositions~\ref{p.primeiro} and~\ref{sngk} this pressure coincides with $\widetilde{P}(\varphi_t)$ and is obtained by taking a supremum over uniformly expanding repellers.
Note that, given $t\in\bR$ and $\varepsilon>0$ and a uniformly expanding repeller $\Lambda$, by \cite[Lemma 3]{GelPrzRam:} there exists a positive integer $n$ and an $f^n$-ECR $\Lambda'\subset\Lambda$ such that
\[
   \frac 1 n P_{f^n|\Lambda'}(S_n\varphi_t) \ge P_{f|\Lambda}(\varphi_t) -\varepsilon.
\]
Note that in this case $\Lambda'\cap\Sigma=\emptyset$ since $\Lambda\cap\Sigma=\emptyset$. Hence  $\Lambda'$ contains non-immediately post-critical points.
Note further that, given any two $f$\nobreakdash-ECR's $\Lambda_1$ and $\Lambda_2$ that both contain non-immediately post-critical points, by~\cite[Lemma 2]{GelPrzRam:} there exists an $f$-ECR $\Lambda\subset J \subset \Sigma$ containing $\Lambda_1\cup \Lambda_2$ and thus with pressure at least equal to the maximum of pressures of $\Lambda_1$ and $\Lambda_2$.

Based on these arguments, we can conclude that for any $N>0$ and $\varepsilon>0$
we can find an integer~$n \ge 1$ and a topologically transitive $f^n$-ECR $\Lambda\subset J$ so that
\[
 \frac 1 n P_{f^n|\Lambda}(\varphi_t)\ge \widetilde{P}(\varphi_t) - \varepsilon
\]
for all $t\in (-N,N)$. This proves the proposition.
\end{proof}

The existence such an approximating sequence of repellers and~\cite[Theorem 3]{GelPrzRam:} together imply the following estimate, which is part of Theorem~\ref{main1}.

\begin{proposition}
   For $\alpha^- \leq \alpha \leq \beta \leq \widetilde{\alpha}^+$ we have
   \[
       \dim_{\rm H} \cL(\alpha, \beta)
       \geq \min\{\widetilde F(\alpha),\widetilde F(\beta)\}\,.
   \]
\end{proposition}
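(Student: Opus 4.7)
The plan is to reduce to the setting of uniformly expanding Cantor repellers (ECRs) inside $J \setminus \Sigma$, where the multifractal formalism developed in~\cite[Sections 2.2, 2.3, 5]{GelPrzRam:} applies without modification, and then to transfer the conclusion to~$J$ via the pressure approximation of Proposition~\ref{p.asympt}. Because the approximating repellers $\Lambda_m$ are disjoint from $\Sigma$ and contain no critical orbit, the bridges construction of~\cite{GelPrzRam:} never encounters the exceptional set and works verbatim. The only change from the non-exceptional setting of~\cite{GelPrzRam:} is that the relevant pressure on these ECRs approximates the hidden pressure $\widetilde P$ rather than the topological pressure~$P$.

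Given $\alpha$, $\beta$ as in the statement, write
\[
   P_m(t) \eqdef \frac{1}{a_m} P_{f^{a_m}|\Lambda_m}(S_{a_m}\varphi_t)
\]
and let $F_m(q) \eqdef q^{-1}\inf_{t}(P_m(t) + tq)$ be the corresponding Legendre--Fenchel transform. By Proposition~\ref{p.asympt}, $P_m(t) \nearrow \widetilde P(\varphi_t)$ pointwise in~$t$ as $m \to \infty$; since each $P_m$ is convex, this convergence is uniform on compact intervals of~$t$, and hence $F_m(\alpha) \to \widetilde F(\alpha)$ and $F_m(\beta) \to \widetilde F(\beta)$. Next, on each ECR $\Lambda_m$, viewed as an expanding repeller for $f^{a_m}$, the bridges-based result~\cite[Theorem~3]{GelPrzRam:} produces a subset of $\cL(\alpha, \beta) \cap \Lambda_m$ of Hausdorff dimension at least $\min\{F_m(\alpha), F_m(\beta)\}$. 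Since $\Lambda_m \subset J$, letting $m \to \infty$ gives the desired bound $\dim_{\rm H} \cL(\alpha, \beta) \ge \min\{\widetilde F(\alpha), \widetilde F(\beta)\}$.

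The main thing to verify along the way is that each $\Lambda_m$, for $m$ sufficiently large, admits ergodic invariant measures with Lyapunov exponents close to any prescribed value in $[\alpha^-, \widetilde\alpha^+]$, so that the bridges construction can actually realize the entire level set $\cL(\alpha, \beta)$ inside the ECR and not just a thinner sub-range. This is a standard consequence of Proposition~\ref{p.asympt} together with the variational principle on $\Lambda_m$: the ranges of Lyapunov exponents of ergodic measures on the $\Lambda_m$ exhaust $[\alpha^-, \widetilde \alpha^+]$ in the limit, because the sup over~$\widetilde \cM_E$ of~$\chi(\mu)$ and the inf are both approximated by measures supported on ECRs disjoint from~$\Sigma$. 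Beyond this, no obstacle specific to the exceptional case arises, since the passage through Proposition~\ref{p.asympt} replaces $P$ by $\widetilde P$ throughout and everything else is exactly as in the non-exceptional proof.
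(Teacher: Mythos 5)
Your overall strategy — approximate $\widetilde P$ by pressures on ECRs in $J \setminus \Sigma$ (Proposition~\ref{p.asympt}), show convergence of the corresponding Legendre transforms $F_m \to \widetilde F$, and invoke~\cite[Theorem~3]{GelPrzRam:} — is the same as the paper's, and your remark that nothing specific to the exceptional case obstructs the bridges construction once one works away from $\Sigma$ is correct. However, there is a genuine gap in how you invoke~\cite[Theorem~3]{GelPrzRam:}. You claim that for each fixed~$m$ it ``produces a subset of $\cL(\alpha,\beta)\cap\Lambda_m$ of Hausdorff dimension at least $\min\{F_m(\alpha),F_m(\beta)\}$,'' and then let~$m\to\infty$. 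This misattributes what that theorem does: it is a statement about a \emph{sequence} of ECRs connected by bridges, not about a single one. The bridges are precisely the connections \emph{between} consecutive $\Lambda_m$'s, so the constructed set is not contained in any single $\Lambda_m$. Moreover, for a fixed~$m$ the set $\cL(\alpha,\beta)\cap\Lambda_m$ may well be \emph{empty}: since $\Lambda_m$ is a uniformly expanding repeller, the Lyapunov exponents of points in $\Lambda_m$ lie in a closed interval $[\alpha_m^-,\alpha_m^+]$ that in general is strictly contained in $(\alpha^-,\widetilde\alpha^+)$, and when $\alpha=\alpha^-$ or $\beta=\widetilde\alpha^+$ these endpoints need not be attained in \emph{any} $\Lambda_m$ (they are only suprema/infima). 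You flag this concern in your last paragraph, but ``admits ergodic measures with exponents close to any prescribed value'' does not suffice to make $\cL(\alpha,\beta)\cap\Lambda_m$ nonempty, let alone of the required dimension.

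The correct use of~\cite[Theorem~3]{GelPrzRam:}, as in the paper, is to pick a \emph{single} sequence $\{\gamma_m\}$ with $\liminf\gamma_m=\alpha$, $\limsup\gamma_m=\beta$, where $\gamma_m$ is required to lie only in the spectrum of $\Lambda_m$ (which eventually sweeps out all of $(\alpha^-,\widetilde\alpha^+)$), take for each~$m$ the equilibrium state $\mu_m$ on $\Lambda_m$ with exponent $a_m\gamma_m$ and dimension $\dim_{\rm H}\mu_m \ge F_{f^{a_m}|\Lambda_m}(\gamma_m)$, and then apply the bridges theorem to the whole sequence $\{(\Lambda_m,\mu_m,\gamma_m)\}$. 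This produces a single Cantor set, crossing all the $\Lambda_m$'s via bridges, whose points have $\underline\chi=\alpha$, $\overline\chi=\beta$ exactly and whose dimension is $\ge \liminf_m F_{f^{a_m}|\Lambda_m}(\gamma_m) = \min\{\widetilde F(\alpha),\widetilde F(\beta)\}$. Your derivation that $F_m\to\widetilde F$ locally is fine and is exactly what makes this work, but the reduction ``one fixed ECR at a time, then take $m\to\infty$'' does not.
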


\begin{proof}
   Consider now a family $\{\Lambda_m\}_{m\ge1}$ of $f^{a_m}$-ECRs as provided by Proposition~\ref{p.asympt} and assume that the spectrum of Lyapunov exponents of $\Lambda_m$ eventually contains any exponent in $(\alpha^-,\widetilde{\alpha}^+)$. Given $\alpha$, $\beta\in[\alpha^-,\widetilde{\alpha}^+]$ we can choose a sequence $\{\gamma_m\}_{m\ge1}$ so that $\liminf_{m\to\infty}\gamma_m=\alpha$ and $\limsup_{m\to\infty}\gamma_m=\beta$ and that each $\gamma_m$ is a Lyapunov exponent of $f^{a_m}|_{\Lambda_m}$. Recall that hence for each $m$ there exist a unique number $t_m=t_m(\gamma_m)\in\bR$ so that $a_m\,\gamma_m=-\frac{d}{ds}P_{f^{a_m}|\Lambda_m}(S_{a_m}\varphi_s)|_{s=t_m}$.
Moreover,  there exists an equilibrium state $\mu_m$ for the potential $S_{a_m}\varphi_{t_m}$ with respect to $f^{a_m}|_{\Lambda_m}$ with Lyapunov exponent (with respect to $f^{a_m}$) equal to $a_m\gamma_m$ and satisfying
\[\begin{split}
   \dim_{\rm H}\mu_m
   &= \frac{h_{\mu_m}(f^{a_m})}{a_m \chi(\mu_m)}
   = \frac{P_{f^{a_m}|\Lambda_m}(\varphi_{t_m})+t_m\, a_m\gamma_m}
       {a_m\gamma_m}\\
   &\ge \frac{1}{a_m \gamma_m}\inf_{t\in\bR}
       \big(P_{f^{a_m}|\Lambda_m}(\varphi_t)+t\, a_m \gamma_m\big)
   \eqdef F_{f^{a_m}|\Lambda_m}(\gamma_m).
\end{split}\]
By Proposition~\ref{p.asympt} and~\eqref{e.desFtilde}, we can conclude that $F_{f^{a_{m_k}}|\Lambda_{m_k}}(\gamma_{m_k})\to\widetilde F(\alpha)$ if $\gamma_{m_k}\to\alpha$ and $F_{f^{a_{m_k}}|\Lambda_{m_k}}(\gamma_{m_k})\to\widetilde F(\beta)$ if $\gamma_{m_k}\to\beta$.
Together with~\cite[Theorem~3]{GelPrzRam:}, this proves the proposition.
\end{proof}

There is one more useful application of the bridges construction.

\begin{lemma} \label{help}
   Given an expanding periodic point $p\notin\Sigma$ and $\varepsilon>0$, there exist a uniformly expanding repeller $\Lambda$ disjoint with $\Sigma$ of positive Hausdorff dimension, containing $p$, and an ergodic non-atomic measure $\mu$ supported on~$\Lambda$ such that
   \[
       \lvert \chi(\mu) - \chi(p)\rvert <\varepsilon\,.
   \]
\end{lemma}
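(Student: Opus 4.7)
The plan is to enlarge $\{p\}$ to a uniformly expanding Cantor repeller $\Lambda\subset J\setminus\Sigma$ of positive Hausdorff dimension containing~$p$, and then produce~$\mu$ as a Gibbs measure on~$\Lambda$. By Lemma~\ref{lem:exisafe} together with Bowen's formula (as cited in its proof), there is an $f^{N_0}$-ECR $\Lambda_0\subset J\setminus\Sigma$ of positive Hausdorff dimension. Since $p\notin\Sigma$ and $f^{-1}(\Sigma)\setminus\Sigma\subset\Crit$ is finite, one backward-orbit branch of~$p$ can be chosen dense in~$J$ and disjoint from~$\Crit$, so~$p$ is non-immediately post-critical in the sense of Section~\ref{sec:final-1}. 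Applying the bridges construction of \cite[Lemma~2]{GelPrzRam:} to~$\Lambda_0$ together with the trivial repeller generated by a sufficiently high iterate of the inverse branch of~$f^n$ fixing~$p$ (where $n$ is the period of~$p$), we obtain a topologically transitive $f^M$-ECR $\Lambda\subset J\setminus\Sigma$ of positive Hausdorff dimension containing both~$p$ and a subsystem of~$\Lambda_0$; replacing~$\Lambda$ by $\bigcup_{i=0}^{M-1} f^i(\Lambda)$ if necessary, we may regard it as an $f$-uniformly expanding repeller.

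On~$\Lambda$, coded by a topologically mixing subshift of finite type with $\log\lvert f'\rvert$ H\"older continuous, the Lyapunov exponents of $f$-ergodic invariant measures fill a closed interval $[\chi^-(\Lambda),\chi^+(\Lambda)]$ which contains $\chi(p)$, because the atomic measure on the orbit of~$p$ is invariant. If $\chi^-(\Lambda)<\chi^+(\Lambda)$, standard thermodynamic formalism gives, for each $\alpha$ in the interior of this interval, a unique $t(\alpha)\in\bR$ such that the equilibrium state~$\mu_{t(\alpha)}$ for the potential $-t(\alpha)\log\lvert f'\rvert$ on~$\Lambda$ is ergodic, Gibbs, hence non-atomic, and has Lyapunov exponent~$\alpha$; choosing $\lvert\alpha-\chi(p)\rvert<\varepsilon$ yields the required~$\mu$. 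Otherwise $\chi^-(\Lambda)=\chi^+(\Lambda)$, so $\log\lvert f'\rvert$ is cohomologous to a constant on~$\Lambda$, every invariant measure on~$\Lambda$ has Lyapunov exponent equal to~$\chi(p)$, and the measure of maximal entropy of $f|_\Lambda$ (non-atomic because the topological entropy is positive) does the job.

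The only delicate point is the first step: one must check that the bridges construction, originally designed for two non-trivial ECRs both disjoint from~$\Sigma$, still applies when one input is the single expanding periodic orbit of~$p$, and that the resulting repeller stays inside $J\setminus\Sigma$. Both facts follow from the non-immediate post-criticality of~$p$ together with the finiteness and forward invariance of~$\Sigma$: only finitely many inverse branches have to be discarded in order to build bridges that respect $J\setminus\Sigma$. The remainder of the argument is standard conformal expanding dynamics.
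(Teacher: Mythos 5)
Your proposal is correct, but it takes a genuinely different route from the paper. The paper bridges the periodic orbit $P$ of $p$ back to itself: since $p\notin\Sigma$ it is non-immediately post-critical, so the bridges construction of~\cite[Lemma~2]{GelPrzRam:} produces an ECR $\Lambda\subset J\setminus\Sigma$ whose symbolic model has (at least) the two behaviours ``follow $P$'' and ``cross the bridge'', hence positive entropy and positive Hausdorff dimension. The measure is then obtained directly by choosing a Gibbs potential that makes the probability of the bridge-crossing cylinder arbitrarily small, which forces the empirical time spent near $P$ close to one and thus $\chi(\mu)$ close to $\chi(p)$; no external ECR and no discussion of the full Lyapunov spectrum of $\Lambda$ are needed. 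You instead bridge $P$ to an auxiliary ECR $\Lambda_0$ and then invoke the thermodynamic picture (the Lyapunov spectrum of a mixing ECR is a closed interval containing $\chi(p)$, each interior value realized by an ergodic Gibbs state) to locate a non-atomic measure with exponent $\varepsilon$-close to $\chi(p)$. Both routes rest on the same two pillars --- non-immediate post-criticality of $p$ and the bridges construction --- and both are sound. What the paper's self-bridge buys is economy: you do not need $\Lambda_0$ at all, and the case split $\chi^-(\Lambda)<\chi^+(\Lambda)$ versus $\chi^-(\Lambda)=\chi^+(\Lambda)$ in your argument is avoided, since tuning the bridge-crossing probability works uniformly. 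What your route buys is that it avoids having to justify the slightly informal statement ``choosing the potential so that the bridge probability is small'' and replaces it with the standard Legendre-duality fact about ECR Lyapunov spectra. One minor point to tighten in your write-up: you should note explicitly that the atomic measure equidistributed on $P$ is $f$-invariant and ergodic and lies among the invariant measures of $\Lambda$ (so $\chi(p)\in[\chi^-(\Lambda),\chi^+(\Lambda)]$), and that if $\chi(p)$ sits at an endpoint you still have interior values $\alpha$ with $\lvert\alpha-\chi(p)\rvert<\varepsilon$ --- you implicitly use both but do not say so.
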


\begin{proof}
We start from the orbit $P$ of the periodic point $p$. As $p\notin\Sigma$, $p$ is non-immediately post-critical. Hence, we can find a bridge from $P$ going back to $P$ and construct $\Lambda$ as in \cite[Lemma 2]{GelPrzRam:}. We can then distribute a Gibbs measure on $\Lambda$ choosing potential in such a way that probability of the backward branch going through the bridge is very small.
\end{proof}

\begin{corollary} \label{def}
   In the definition of $\widetilde{\alpha}^+$, instead of nonatomic ergodic measures one can use ergodic measures with support outside $\Sigma$ or ergodic measures giving measure zero to $\Sigma$.
\end{corollary}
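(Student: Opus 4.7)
The plan is to show that the three candidate suprema --- call them $\alpha_1 = \widetilde{\alpha}^+$ (the sup of $\chi(\mu)$ over non-atomic ergodic measures), $\alpha_2$ (the sup over ergodic measures with support contained in $J\setminus\Sigma$), and $\alpha_3$ (the sup over ergodic measures with $\mu(\Sigma)=0$) --- all coincide. Two of the six possible inequalities are automatic from the finiteness of $\Sigma$ (Lemma~\ref{lem:at most four}): a non-atomic measure cannot give positive mass to a finite set, and a measure supported off $\Sigma$ certainly gives $\Sigma$ zero mass, so $\alpha_1 \le \alpha_3$ and $\alpha_2 \le \alpha_3$. It therefore remains only to prove $\alpha_3 \le \alpha_1$ and $\alpha_1 \le \alpha_2$.

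For $\alpha_3 \le \alpha_1$, I would take an ergodic $\mu$ with $\mu(\Sigma)=0$. If $\mu$ is non-atomic it already competes in the sup defining $\alpha_1$, so assume $\mu$ is atomic; ergodicity then forces $\mu$ to be equidistributed on a single periodic orbit $P \subset J\setminus\Sigma$. If every point of $P$ is non-expanding, then $\chi(\mu) \le 0$, which is strictly less than the positive exponent of any equilibrium measure on one of the positive-dimensional hyperbolic repellers in $J\setminus\Sigma$ produced by Proposition~\ref{p.asympt}. Otherwise $P$ contains an expanding periodic point $p \notin \Sigma$, and Lemma~\ref{help} yields, for every $\varepsilon > 0$, a non-atomic ergodic measure $\nu$ with $\lvert \chi(\nu)-\chi(p)\rvert < \varepsilon$, whence $\alpha_1 \ge \chi(\mu) - \varepsilon$. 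Letting $\varepsilon \to 0$ closes this step.

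For $\alpha_1 \le \alpha_2$, I would invoke the approximating sequence $\{\Lambda_m\}_{m \ge 1}$ of topologically transitive, uniformly expanding $f^{a_m}$-invariant repellers contained in $J\setminus\Sigma$ furnished by Proposition~\ref{p.asympt}. As recorded in the proof of the lower bound for Theorem~\ref{main1}, these repellers carry Gibbs equilibrium measures whose $f$-Lyapunov exponents can be prescribed to approach any target value in $(\alpha^-,\widetilde{\alpha}^+)$. Because each $\Lambda_m$ is uniformly expanding it is disjoint from $\Crit$, so the exceptional condition $f^{-1}(\Sigma)\setminus\Sigma\subset\Crit$ forces every forward image $f^i(\Lambda_m)$ to remain in $J\setminus\Sigma$. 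Averaging an $f^{a_m}$-ergodic Gibbs measure on $\Lambda_m$ over the iterates $\{f^i(\Lambda_m)\}_{i=0}^{a_m - 1}$ produces an $f$-invariant measure whose support is contained in $J\setminus\Sigma$, and taking an $f$-ergodic component (still non-atomic and still supported off $\Sigma$) realizes the corresponding Lyapunov exponent. Sending this exponent to $\widetilde{\alpha}^+$ gives $\alpha_2 \ge \widetilde{\alpha}^+ = \alpha_1$.

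The only genuinely delicate point is the atomic case in the second paragraph, which is exactly what Lemma~\ref{help} is designed to handle; the rest of the argument amounts to careful bookkeeping on the hyperbolic repellers already constructed in Proposition~\ref{p.asympt}.
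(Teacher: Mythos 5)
Your proof is correct and follows the route the paper implicitly intends: the paper states this corollary immediately after Lemma~\ref{help} precisely so that the atomic ergodic case (a periodic orbit off $\Sigma$) can be converted to a non-atomic ergodic measure on a repeller, and the $\alpha_1\le\alpha_2$ step is supplied by the bridges repellers $\Lambda_m\subset J\setminus\Sigma$ of Proposition~\ref{p.asympt}, whose forward images also avoid $\Sigma$ exactly for the reason you give. Your bookkeeping $\alpha_1\le\alpha_2\le\alpha_3\le\alpha_1$ is a clean way to organize what the paper leaves unwritten.
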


\subsection{Upper bound}
\label{sec:final-2}

Recall that a point $x$ is called \emph{conical} if there exists a
number $r(x)>0$, a sequence of numbers
$n_\ell=n_\ell(x)\nearrow\infty$, and a sequence
$U_\ell=U_\ell(x)$ of neighborhoods of $x$ such that
$f^{n_\ell}(U_\ell)= B(f^{n_\ell}(x),r)$, the map  $f^{n_\ell}$ is
univalent on $U_\ell$ and that distortion $\Dist
f^{n_\ell}|_{U_\ell}$ is bounded uniformly in~$\ell$ and~$x$ by a constant~$K > 1$ (the latter condition follows from the former one from Koebe's distortion lemma by replacing~$r$ by say~$r/2$).

The following proposition will allow us to restrict our
considerations concerning dimension to conical points with
positive exponents.

\begin{proposition}[{\cite[Proposition 3]{GelPrzRam:}}]\label{conic1}
   The set of points $x\in J$ that are not conical and satisfy $\overline\chi(x)>0$ has Hausdorff dimension zero.
\end{proposition}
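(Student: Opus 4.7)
The plan is to adapt the argument of~\cite[Proposition~3]{GelPrzRam:}, following Przytycki's classical approach for rational maps. First, by countable stability of Hausdorff dimension applied to the exhaustion $\{x\in J\colon \overline\chi(x)>0\} = \bigcup_{k\ge 1}\{x\in J\colon \overline\chi(x)>1/k\}$, it suffices to show that for each fixed $\alpha>0$ the set
\[
    E_\alpha \eqdef \{x\in J\colon \overline\chi(x)>\alpha
    \text{ and } x \text{ is non-conical}\}
\]
has $\dim_{\rm H} E_\alpha = 0$.

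Second, I would establish the key geometric input: a point $x\in J$ with $\overline\chi(x)>\alpha$ is non-conical precisely when, for every $r>0$ and every sufficiently large $n$ along a subsequence realizing $\lvert(f^n)'(x)\rvert\ge e^{\alpha n/2}$, the pullback component $W_n\eqdef f^{-n}_x(B(f^n(x),r))$ must contain a critical point. Using Koebe distortion on the univalent piece of this pullback and the local normal form $z\mapsto z^{\deg_f(c)}$ near the \emph{first} critical encounter along the orbit of $x$, one concludes that the forward orbit of $x$ visits the countable forward postcritical set $P\eqdef \bigcup_{c\in\Crit}\bigcup_{j\ge 0}\{f^j(c)\}$ at exponential rate: there is $\beta=\beta(\alpha,\deg f)>0$ such that
\[
    \dist(f^n(x),P)\le e^{-\beta n}
    \quad\text{for infinitely many } n \text{ with } \lvert(f^n)'(x)\rvert\ge e^{\alpha n/2}.
\]

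Third, I would carry out a covering estimate for the corresponding limsup set
\[
    E_\alpha\subset \bigcap_{n_0\ge 1}\bigcup_{n\ge n_0}
    f^{-n}\bigl(B(P,e^{-\beta n})\bigr).
\]
The shell $B(P,e^{-\beta n})$ is covered by $n\cdot\card\Crit$ balls of radius $e^{-\beta n}$. After selecting the first critical encounter time $j(n,x)$ along the orbit of $x$, the pullback from $f^{j(n,x)}(x)$ back to $x$ is univalent and hence introduces no branching; the single critical encounter contributes only a root-type distortion, absorbed into a slightly smaller rate $\beta'>0$. Consequently $E_\alpha$ admits a cover at scale $e^{-\beta' n}$ by at most $C\cdot n$ balls for some $C>0$, whence
\[
   \dim_{\rm H} E_\alpha\le \limsup_{n\to\infty}\frac{\log(Cn)}{\beta' n}=0.
\]

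The main obstacle is the combinatorial control in the final step. A naive pullback of $B(P,e^{-\beta n})$ by $f^{-n}$ produces up to $(\deg f)^n$ branches, yielding an exponential cover that would destroy the dimension estimate. The remedy is the \emph{first-critical-encounter decomposition}: the pullback along $x$'s orbit is univalent (hence Koebe-controlled, with a single branch per $x$) up to the first encounter with $\Crit$, after which the local $z^d$ normal form contributes only a $d$-th root loss in radius. This restores the polynomial-in-$n$ cover count and delivers $\dim_{\rm H} E_\alpha=0$, completing the proposition.
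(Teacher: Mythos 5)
The paper does not prove this proposition here; it is quoted verbatim from~\cite[Proposition~3]{GelPrzRam:}. So I can only assess your argument on its own terms, and there are two places where it does not hold up.

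First, the geometric input in your second step is not derivable from non-conicality. If $n$ is a hyperbolic time for $x$ and the pullback $f^{-n}_x(B(f^n(x),r))$ fails to be univalent, with last critical encounter at time $m^*$, the telescoping estimate gives $\dist(f^{m^*}(x),\Crit)\lesssim r\,e^{-(n-m^*)(\alpha-\varepsilon)}$: the exponential rate is in $n-m^*$, not $n$, and it is the distance of $f^{m^*}(x)$ to $\Crit$ (not of $f^n(x)$ to $P$) that is controlled. Pushing forward to time $n$ one only obtains $\dist\bigl(f^n(x),f^{n-m^*}(c)\bigr)\le r$, since $f^{n-m^*}(c)\in B(f^n(x),r)$; there is no exponential decay. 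To get $\dist(f^n(x),P)\le e^{-\beta n}$ one would need the radius $r$ in the conical condition to be taken $\approx e^{-\beta n}$, but non-conicality is only a qualitative statement (for every fixed $r>0$ the univalence eventually fails) with a threshold $N(r)$ that depends on $x$; it gives no uniform rate $\beta$ over $E_\alpha$.

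Second, and more seriously, the covering count is not polynomial. Your observation that the pullback from $f^{j(n,x)}(x)$ back to $x$ is univalent is true for each fixed $x$, but it does not bound the number of distinct components of $f^{-n}\bigl(B(P,e^{-\beta n})\bigr)$ that meet $E_\alpha$. A priori there are up to $(\deg f)^n$ such branches; with diameters of order $e^{-\beta n}e^{-\alpha n/2}$ on the expanding ones, a straightforward covering sum yields only $\dim_{\rm H}E_\alpha\le \log\deg f/(\beta+\alpha/2)$, not zero. Nothing in your decomposition reduces the branch count from exponential to polynomial: the map $f^{-j(n,x)}$ still has $(\deg f)^{j(n,x)}$ branches, even though each individual branch carries no further critical encounters. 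Controlling this branching (for instance by passing to a conformal/pressure estimate, or by a much more careful selection of hyperbolic times that forces $n-m^*$ to be a definite fraction of $n$ together with control of $|(f^{m^*})'(x)|$) is exactly the hard core of~\cite[Proposition~3]{GelPrzRam:}, and it is missing here.
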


We are now ready to prove an upper bound for the dimension.

\begin{proposition}\label{prop:upper}
    Let $0<\alpha\le\beta \le\widetilde\alpha^+$.
    We have
    \[
    \dim_{\rm H}\cL(\alpha,\beta)\le \max \left\{ 0,\max_{\alpha\le q\le \beta}\widetilde F(q) \right\}.
    \]
\end{proposition}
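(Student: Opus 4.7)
The plan is to combine the reduction to conical points provided by Proposition~\ref{conic1} with Frostman-type upper bounds for the local dimension of the conformal measures $\nu_t$ (finite when provided by part~$1$ of Proposition~\ref{p:known conformal}, $\sigma$-finite when only Proposition~\ref{fsfjl} applies). Since $\alpha>0$, every $x\in \cL(\alpha,\beta)$ has $\overline\chi(x)\ge\alpha>0$, so Proposition~\ref{conic1} lets us discard the non-conical points and restrict to the conical ones. For such an $x$ with scales $n_\ell\nearrow\infty$ and Koebe neighborhoods $U_\ell$, one has $\diam U_\ell\asymp r/\lvert(f^{n_\ell})'(x)\rvert$, while conformality and the uniform bound on $\Dist f^{n_\ell}|_{U_\ell}$ yield
\[
\nu_t(U_\ell)\asymp e^{-n_\ell\widetilde P(\varphi_t)}\,\lvert(f^{n_\ell})'(x)\rvert^{-t}\,\nu_t(B(f^{n_\ell}(x),r)).
\]
Whenever the last factor stays bounded along a subsequence, setting $\chi_\ell\eqdef\frac{1}{n_\ell}\log\lvert(f^{n_\ell})'(x)\rvert$ gives
\[
\liminf_{\ell\to\infty}\frac{\log\nu_t(U_\ell)}{\log\diam U_\ell}\le \frac{\widetilde P(\varphi_t)+t\,q}{q}
\]
for every accumulation point $q$ of $\{\chi_\ell\}$, and all such $q$ lie in $[\alpha,\beta]$.

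The main difficulty occurs in the range $t\le t_-$, where Proposition~\ref{fsfjl} only produces a $\sigma$-finite $\nu_t$ that is infinite on every neighborhood of $\Sigma$. To control $\nu_t(B(f^{n_\ell}(x),r))$ one must show that any conical $x\in\cL(\alpha,\beta)$ admits an infinite subsequence $\{n_{\ell_k}\}$ along which $f^{n_{\ell_k}}(x)$ stays at a definite distance from $\Sigma$. This is where the two-sided hypothesis $0<\alpha\le\beta\le\widetilde\alpha^+<\chi_{\sup}=\chi(\Sigma_+)$ enters: shadowing the expanding periodic points of $\Sigma_+$ over long blocks of time would accumulate derivative at the rate $\chi_{\sup}>\widetilde\alpha^+$, forcing $\overline\chi(x)>\widetilde\alpha^+$; dwelling near the neutral points of $\Sigma_0$ (where the multiplier has modulus one) would slow the derivative growth and force $\underline\chi(x)=0$, contradicting $\alpha>0$. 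A Pliss-type extraction applied to the finite set $\Sigma$ then produces the required subsequence of scales that avoids a fixed neighborhood of $\Sigma$.

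With such a subsequence fixed, the argument is assembled as follows. The conical subset of $\cL(\alpha,\beta)$ is covered by countably many subsets $E_{q,j}$ parameterized by rationals $q\in[\alpha,\beta]$ and an integer $j$ encoding the distance-from-$\Sigma$ threshold, on each of which $\chi_{\ell_k}\to q$ and $\nu_t(B(f^{n_{\ell_k}}(x),r))$ is uniformly bounded. For each $q$ choose $t=t(q)$ nearly realizing the infimum in the definition~\eqref{e.desFtilde} of $\widetilde F(q)$; the restriction of $\nu_{t(q)}$ to a compact set away from $\Sigma$ is finite, and the standard Frostman/Besicovitch mass-distribution argument applied to the neighborhoods $U_{\ell_k}$ yields
\[
\dim_{\rm H} E_{q,j}\le \frac{\widetilde P(\varphi_{t(q)})+t(q)\,q}{q}=\widetilde F(q)+o(1).
\]
Continuity of $\widetilde F$ on $[\alpha,\beta]$ together with countable stability of Hausdorff dimension then gives $\dim_{\rm H}\cL(\alpha,\beta)\le\max_{\alpha\le q\le\beta}\widetilde F(q)$, which, together with the trivial bound $0$, is the desired inequality.

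The principal obstacle is the $\sigma$-finiteness of $\nu_t$ in the phase-transition regime $t\le t_-$: without a finite global conformal measure one has to hand-build a subsequence of scales along which $\nu_t$ behaves as if finite, and it is precisely the two-sided constraint $0<\alpha$ and $\beta\le\widetilde\alpha^+$ that makes this extraction work. Once that extraction is available, the rest is a fairly routine Koebe/conformal-measure/covering dimension estimate as in~\cite{GelPrzRam:}.
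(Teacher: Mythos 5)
Your overall framework is correct, and you identify the right central difficulty: in the regime $t < t_-$ the conformal measure $\mu_t$ from Proposition~\ref{fsfjl} is only $\sigma$-finite, with infinite mass near $\Sigma$, so the Frostman estimate requires knowing that $\mu_t(B(f^{n_\ell}(x),r))$ is controlled. However, the route you propose for achieving this --- a Pliss-type extraction tied to the two-sided hypothesis $0<\alpha\le\beta\le\widetilde\alpha^+$ --- is genuinely different from, and substantially more complicated than, what the paper does; and as sketched it has real gaps.

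The paper handles the issue with a short pullback observation that works for \emph{every} conical time $n_\ell$, not merely a subsequence, and makes no use of $\beta\le\widetilde\alpha^+$ at all. For a conical $x\notin\Sigma$ set $r=\tfrac12\min\{r(x),\dist(x,\Crit\cup\Sigma)\}$ and suppose $B(f^{n_\ell}(x),2r)$ contained a point $s\in\Sigma$. Pulling back by the univalent inverse branch at $x$ gives a preimage $s'\in f^{-n_\ell}_x\bigl(B(f^{n_\ell}(x),2r)\bigr)$ of $s$, and its forward orbit up to $s$ consists of backward iterates of a point of $\Sigma$. Since $f^{-1}(\Sigma)\setminus\Sigma\subset\Crit$, at each step one either enters $\Crit$ --- impossible because $f^{n_\ell}$ is univalent on the pullback of a $2r<r(x)$ ball --- or one stays in $\Sigma$, so $s'\in\Sigma$; but this is impossible because $\dist(x,\Sigma)>2r$ exceeds $\diam f^{-n_\ell}_x\bigl(B(f^{n_\ell}(x),2r)\bigr)$. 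Hence $B(f^{n_\ell}(x),2r)\cap(\Sigma\cup\Crit)=\emptyset$ for all $\ell$, and the single finite constant $\kappa=\mu_t\bigl(J\setminus B(\Sigma,r)\bigr)$ uniformly bounds $\mu_t(B(f^{n_\ell}(x),r))$. This is what yields estimate~\eqref{huch} directly; no subsequence extraction, no decomposition into countably many sets $E_{q,j}$, and no appeal to $\beta\le\widetilde\alpha^+$ are needed.

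Your alternative sketch, beyond being unnecessary, does not clearly close. The dichotomy you invoke --- shadowing $\Sigma_+$ over long blocks forces $\overline\chi(x)>\widetilde\alpha^+$, dwelling near $\Sigma_0$ forces $\underline\chi(x)=0$ --- does not follow just from $f^{n_\ell}(x)$ being near $\Sigma$ at the conical times: those times are special, and proximity to $\Sigma$ at time $n_\ell$ does not entail a long shadowing block on either side of $n_\ell$, nor is there any reason why an orbit could not balance time near $\Sigma_+$ against time near $\Sigma_0$ so that neither contradiction appears. Turning that sketch into a proof would require controlling the orbit between conical times and a genuine Pliss argument; all of that is replaced in the paper by the one-paragraph pullback lemma above, which exploits the defining property of $\Sigma$ rather than the numerical constraints on $\alpha,\beta$.
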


\begin{proof}
The proof will follow the same ideas as the proof
of~\cite[Proposition 2]{GelPrzRam:}. The only difference is that in the case~$f$ has a phase transition in the negative spectrum and~$t < t_{-}$, we will use a $\sigma$-finite conformal measure constructed in Section~\ref{sec:bal} instead of a conformal probability measure.

By Proposition~\ref{conic1} it is sufficient to study the subset $\cL_{\rm c}(\alpha,\beta)\subset\cL(\alpha,\beta)$ of points that are conical.
Recall that, by the Frostman Lemma, if there exist a finite Borel measure $\mu$ and a number $\theta$ such that for every $x\in\cL_{\rm c}(\alpha,\beta)$ we have
\[
   \underline d_\mu(x)\eqdef \liminf_{\delta\to0}\frac{\log\mu(B(x,\delta))}{\log\delta}\le \theta
\]
then $\dim_{\rm H}\cL_{\rm c}(\alpha,\beta)\le \theta$ (see also~\cite[Theorem
7.2]{Pes:96}).

Given a conical point $x\in J\setminus \Sigma$ with $0<\alpha=\underline\chi(x)$ and $\beta=\overline\chi(x)$, there exist numbers $q(x)\in[\alpha,\beta]\setminus \{0\}$, $r(x)>0$, and $K(x)>1$,
and a sequence of numbers $n_\ell=n_\ell(x)$ such that
    \begin{equation} \label{eqn:gh}
        \lim_{\ell\to\infty}\frac{1}{n_\ell}\log\,\lvert (f^{n_\ell})'(x)\rvert = q(x)
    \end{equation}
 and that
    \begin{equation} \label{est2}
        r\,\lvert(f^{n_\ell})'(x)\rvert^{-1} K(x)^{-1}
        \le \diam f^{-n_\ell}_x\big(B(f^{n_\ell}(x),r)\big)
        \le r \,\lvert(f^{n_\ell})'(x)\rvert^{-1}K(x)
    \end{equation}
for all $\ell$ and all $r\in(0,r(x))$ (compare, for example,~\cite[Lemma
7]{GelPrzRam:}).
By omitting finitely many $n_\ell$ we can assume
that the right hand side of \eqref{est2} is not greater than $r$.
Replacing $n_\ell$ by $n_\ell - 1$ and $r(x)$ by $r(x)/\sup\,\lvert f'\rvert$
if necessary, we can also freely assume that
$B(f^{n_\ell}(x), r(x))\cap \Crit=\emptyset$.

Given $x\in\cL_{\rm c}(\alpha,\beta)$, let us fix  $r > 0$ satisfying
\[
   r = \frac 1 2 \min\left\{r(x), \dist(x,\Crit\cup\,\Sigma)\right\}.
\]
Denote
\[
   U_{\ell}\eqdef f_x^{-n_\ell}(B(f^{n_\ell}(x),r)).
\]
Observe that $B(f^{n_\ell}(x), 2\,r)$ does not intersect
$\Sigma\cup \Crit$. Indeed, by our assumption it does not
intersect $\Crit$. Further, if it intersected $\Sigma$ then either
$f^n(f^{-n_\ell}_x(B(f^{n_\ell}(x),2\,r)))$ would intersect
$\Crit$ for some $0\leq n<n_\ell$ or
$f^{-n_\ell}_x(B(f^{n_\ell}(x),2\,r))$ would intersect $\Sigma$.
The former is impossible because the map would not be univalent
there (we remind that $2\,r<r(x)$), the latter is impossible
because $\dist(x,\Sigma)>2\,r>\diam
f^{-n_\ell}_x(B(f^{n_\ell}(x),2\,r))$.

Given $t\in\bR$ let $\mu_t$ be the $\sigma$-finite measure that is $\exp(\widetilde P(\varphi_t)-\varphi_t)$-conformal outside~$\Crit$ as provided by Proposition~\ref{fsfjl} and if $t\ge 0$ then let~$\mu_t$ be the finite $\exp(\widetilde P(\varphi_t)-\varphi_t)$-conformal measure provided by Proposition~\ref{p:known conformal}.
Since in all cases~$\mu_t$ is finite outside every neighborhood of~$\Sigma$, if we put $\kappa = \mu_t (J \setminus B(\Sigma, r))$ then~$\kappa$ is finite and for every~$\ell$ we have,
\begin{equation}\label{huch}
  \mu_t(U_\ell)
  \le
    \kappa \, K^t e^{-n_\ell \widetilde P(\varphi_t)}\,\lvert
    (f^{n_\ell})'(x)\rvert^{-t}.
\end{equation}
As~$x$ is conical, we have
\[
   U_\ell\supset B\big(x, K^{-1} r \,\lvert(f^{n_\ell})'(x)\rvert^{-1}\big).
\]
Together with~\eqref{huch} and~\eqref{eqn:gh} this yields
\begin{multline*}
       \underline d_{\mu_t}(x)
   =
   \liminf_{\delta\to0}\frac{\log\mu_t(B(x,\delta))}{\log\delta}
   \\ \le
   \liminf_{\ell\to0} \frac{\log\mu_t\big(B(x,K^{-1}r\,\lvert (f^{n_\ell})'(x)\rvert^{-1}\big)} {\log\,(K^{-1}r\,\lvert (f^{n_\ell})'(x)\rvert^{-1})}
   \le
   \frac{\widetilde P(\varphi_t)+t\,q}{q}.
\end{multline*}
Recall that this is true for every $x\in\cL_{\rm c}(\alpha,\beta)$ and~$t \in \mathbb{R}$.
Now concluding as in the proof of ~\cite[Proposition
2]{GelPrzRam:}, using the Frostman lemma, we obtain that $\dim_{\rm
H}\cL_{\rm c}(\alpha,\beta)\le \max \left\{ 0, \max_{\alpha\le q\le\beta}\widetilde F(q) \right\}$, as wanted.
\end{proof}

\subsection{Completeness of the spectrum}
\label{sec:final-3}

We establish the following gap in the spectrum of upper exponents.

\begin{proposition}\label{prop:monic}
    If $f$ is exceptional then
    $\displaystyle \overline\chi(x)\le \widetilde\alpha^+$ for every $x\in J\setminus\Sigma$.
\end{proposition}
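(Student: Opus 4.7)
\medskip
\noindent\emph{Proof plan.}
The plan is to argue by contradiction. Suppose some $x\in J\setminus\Sigma$ satisfies $\overline\chi(x)>\widetilde\alpha^+$. First note that the forward orbit of $x$ cannot enter $\Sigma$: if $f^n(x)\in\Sigma$ for a smallest $n\ge 1$, then $f^{n-1}(x)\in f^{-1}(\Sigma)\setminus\Sigma\subset\Crit$, whence $(f^n)'(x)=0$ and $\overline\chi(x)=-\infty$, contradicting $\overline\chi(x)>\widetilde\alpha^+\ge 0$.

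Set $L=\overline\chi(x)$ and choose $n_k\to\infty$ with $\frac{1}{n_k}\log\lvert(f^{n_k})'(x)\rvert\to L$. Passing to a subsequence, the empirical measures $\mu_{n_k}=\frac{1}{n_k}\sum_{i=0}^{n_k-1}\delta_{f^i(x)}$ converge weak-$*$ to an $f$-invariant probability measure $\mu$. Since $\log\lvert f'\rvert$ is upper semicontinuous on $\overline\bC$ and bounded above, one obtains $\int\log\lvert f'\rvert\,d\mu\ge L$. By the ergodic decomposition $\mu=\int\mu_e\,dP(\mu_e)$, together with the fact that $\Sigma$ is a finite forward-invariant set with $f^{-1}(\Sigma)\setminus\Sigma\subset\Crit$, ergodicity forces $\mu_e(\Sigma)\in\{0,1\}$ for $P$-almost every $\mu_e$; those with $\mu_e(\Sigma)=1$ are uniform measures on the finitely many periodic orbits contained in $\Sigma$. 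Corollary~\ref{def} gives $\chi(\mu_e)\le\widetilde\alpha^+$ whenever $\mu_e(\Sigma)=0$, so the inequality $\int\chi(\mu_e)\,dP(\mu_e)\ge L>\widetilde\alpha^+$ forces some expanding periodic orbit $p\subset\Sigma_+$ with $\chi(p)>\widetilde\alpha^+$ to appear with positive weight in $\mu$.

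The remaining step is to rule out such a $\Sigma$-supported component, and here I would appeal to the bridges machinery of Lemma~\ref{help} and~\cite{GelPrzRam:}. Positive $\mu$-mass on the orbit of $p$ forces the forward orbit of $x$ to accumulate on that orbit with positive density; by local linearization at the expanding periodic orbit of $p$, these visits of $x$ shadow it with increasingly good control on the derivative growth. Combined with density of repelling periodic points of $f$ in $J$ and finiteness of $\Sigma$, this yields, arbitrarily close to $p$, a repelling periodic point $q\in J\setminus\Sigma$ whose exponent $\chi(q)$ is as close to $\chi(p)$ as desired. Feeding such a $q$ into Lemma~\ref{help} produces a uniformly expanding repeller $\Lambda\subset J\setminus\Sigma$ supporting a non-atomic ergodic measure of Lyapunov exponent strictly greater than $\widetilde\alpha^+$, contradicting Corollary~\ref{def}.

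The hard part is precisely this last closing step: producing a repelling periodic point $q\in J\setminus\Sigma$ with $\chi(q)$ close to $\chi(p)$. In effect one needs a quantitative shadowing of the near-$p$ portion of the orbit of $x$ by genuine periodic orbits, together with the purely set-theoretic observation --- available because $\Sigma$ is finite --- that those periodic points can be chosen off $\Sigma$. This parallels, and in our setting is somewhat easier than, the Bowen-specification closing used in the alternative proof given in Appendix~\ref{s:alternative}.
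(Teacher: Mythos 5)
There is a genuine gap, and it is exactly at the point you flag as the ``hard part.'' You claim that the forward orbit of~$x$ accumulating on the expanding cycle of~$p\in\Sigma_+$ with positive density yields, after a closing argument, a repelling periodic point~$q\in J\setminus\Sigma$ with $\chi(q)$ arbitrarily close to~$\chi(p)$. This is false, and in fact the impossibility of this is the entire phenomenon the proposition addresses. By the defining property of~$\Sigma$, any orbit outside~$\Sigma$ that enters a small neighborhood of the cycle of~$p$ must enter through a point near some $c\in f^{-1}(\Sigma)\setminus\Sigma\subset\Crit$. If the orbit then shadows the cycle of~$p$ for~$n$ periods, it entered the neighborhood at distance roughly $e^{-n\ell\chi(p)/d}$ from~$c$ (where $d=\deg_{f^k}(c)$), so the derivative contribution of the entry block is roughly $e^{-n\ell\chi(p)(d-1)/d}$; this cancels all but a factor~$1/d$ of the exponential growth accumulated while shadowing~$p$. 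The result is a periodic point with exponent near $\chi(p)/d=\chi_{\ess}(c)$, not near $\chi(p)$. This is precisely Lemma~\ref{l.neu} and Lemma~\ref{l:periodicexcep}: the best you can close up near an exceptional repelling cycle is $\chi_{\ess}^+$, which is strictly below $\chi(p)$ whenever $d>1$. So ``density of repelling periodic points plus finiteness of $\Sigma$'' does not suffice; the degree of the blocking critical point enters quantitatively.

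The first part of your argument is essentially Lemma~\ref{l.heuleni}, and it is sound up to the point where you deduce that some cycle $p\subset\Sigma_+$ with $\chi(p)>\widetilde\alpha^+$ is charged by~$\mu$. But this is not a contradiction by itself — such a cycle can and does exist in the exceptional phase-transition case. The missing step is to bound $\overline\chi(x)$ itself by $\chi_{\ess}^+$, which requires the same critical-point analysis applied to the orbit of~$x$ rather than to the hypothetical~$q$: each block of the orbit spent near~$\Sigma_+$ is preceded by an entry near a critical point, so the time-averaged growth is at most $\chi_{\ess}^+$ (again Lemma~\ref{l.neu}, then Lemma~\ref{l.heuleni}). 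Together with $\widetilde\alpha^+\ge\chi_{\ess}^+$ from Lemma~\ref{l:periodicexcep} and with Corollary~\ref{def} for the off-$\Sigma$ ergodic components, this gives the stated bound. In short: your plan omits the ``loss by a factor of the critical degree'' estimate, and without it the contradiction you seek never materializes.
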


We give two proofs of this proposition, one in this section and the other one in Appendix~\ref{s:alternative}.

We will denote the spherical distance by~$\dist$.
Recall that for a rational map~$g$ and a critical point~$c \in \overline{\bC}$ of~$g$ we denote by~$\deg_g(c)$ the local degree of~$g$ at~$z = c$.

	By Corollary~\ref{def} we have $\widetilde\alpha^+<\alpha^+$ if and only if there is a periodic point in~$\Sigma$ whose exponent is strictly larger than $\widetilde\alpha^+$.
Hence, to prove Proposition~\ref{prop:monic} we need to control the exponent of any point $x \in J \setminus \Sigma$ whose orbit stays most of the time close to $\Sigma$.
Any orbit piece that shadows some (periodic) orbit in $\Sigma$ for a long time inherits its exponent, however right before it must have passed close to some critical point which results in a drop of the exponent.

Let us make this more precise.
For~$c \in f^{-1}(\Sigma) \setminus \Sigma \subset \Crit$ let~$k \ge 1$ be the minimal integer such that~$f^k(c)$ is a periodic point, put
\[
   \chi_{\ess}(c) \eqdef
   \frac{\chi(p)}{\deg_{f^k}(c)}.
\]
and if~$f$ is exceptional then we put
\[ \chi_{\ess}^+ \eqdef \max_{c \in f^{-1}(\Sigma) \setminus \Sigma}\chi_{\ess}(c).\]

We remark that there are examples where there is a point $c\in\Crit$ so that $f^k(c)\in \Sigma$ for some minimal number $k>1$.  If $f^k(c)\in\Sigma_0$ then $\chi_{\rm ess}(c) = 0$. If $f^k(c) \in \Sigma_+$ then $c' = f^{k -1}(c)$ is a critical point in $f^{-1}(\Sigma)$ and $\chi_{\rm ess}(c) < \chi_{\rm ess}(c')$. Thus, in none of these cases the ``essential exponent'' of $c$ and hence of an orbit piece that would shadow some periodic orbit $\{f^j(c)$, $j\ge k\}$ in $\Sigma$ would have large exponent. Hence, in what follows we can restrict ourselves to the case that $k=1$.

We have the following result.

\begin{lemma}\label{l.neu}
    Suppose~$f$ is exceptional. Let~$c \in f^{-1}(\Sigma_+) \setminus \Sigma_+$  and let~$k \ge 1$ be the minimal integer such that~$f^k(c) \in \Sigma_+$ is periodic. Then there exist constants~$\delta > 0$ and $C > 0$ such that for every $x \in J$ near~$c$, but different from~$c$, and every integer~$n \ge k$ such that for every~$j \in \{k, k + 1, \ldots, n - 1 \}$ we have~$f^j(x) \in B(\Sigma, \delta)$, the following estimate holds
\[
    \log \,\lvert (f^n)'(x)\rvert
    \le
    n \chi_{\ess}(c) + C.
\]
\end{lemma}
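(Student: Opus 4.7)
The approach is to split $f^n$ at the critical transition: write $f^n = f^{n-k}\circ f^k$ and set $y := f^k(x)$, $p := f^k(c) \in \Sigma_+$, and $d := \deg_{f^k}(c)$, so that $\chi_{\ess}(c) = \chi(p)/d$ and $d\ge 2$. The first factor is controlled by the local critical behavior of $f^k$ at $c$: there exist constants $C_1, C_1' > 0$ (depending only on $f, c, k$) such that for all $x$ sufficiently close to $c$,
\[
|(f^k)'(x)| \le C_1\, |x - c|^{d-1}, \qquad |y - p| \ge C_1'\,|x - c|^d.
\]
These are the standard leading-order expansions at a critical point of $f^k$ of local degree $d$.

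The second factor is controlled by hyperbolic shadowing along the expanding periodic orbit of $p$. Let $m$ be the period of $p$. Choose $\delta > 0$ small enough so that (i) the balls $B(q, 2\delta)$ for $q \in \Sigma$ are pairwise disjoint, (ii) the $\delta$-neighborhood of the $f$-orbit of $p$ is disjoint from $\Sigma_0$ (this is possible since $\Sigma_+ \cap \Sigma_0 = \emptyset$, as a common element would have to lie on both a neutral and an expanding periodic orbit), and (iii) on this neighborhood the return map $f^m$ is uniformly expanding with derivative comparable to $e^{m\chi(p)}$. Under the hypothesis $f^j(x)\in B(\Sigma,\delta)$ for $j\in\{k,\ldots,n-1\}$, an inductive shadowing argument based on $f(\Sigma)\subset\Sigma$ and the disjointness of the $\delta$-balls forces the orbit $y, f(y), \ldots, f^{n-k-1}(y)$ to track the periodic orbit of $p$ inside $\Sigma_+$ step by step. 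Uniform hyperbolicity together with bounded distortion along this shadowed orbit then yield constants $C_2, C_3 > 0$ independent of $n$ and $x$ such that
\[
|(f^{n-k})'(y)| \le C_2\, e^{(n-k)\chi(p)},
\qquad
|y - p| \le C_3\,\delta\, e^{-(n - k)\chi(p)},
\]
the second by a backward-contraction argument from $|f^{n-k-1}(y) - f^{n-k-1}(p)| \le 2\delta$.

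Combining the lower bound $|y-p| \ge C_1' |x-c|^d$ with the upper bound $|y-p| \le C_3 \delta \,e^{-(n-k)\chi(p)}$ gives
\[
(d-1)\log|x-c| \le \frac{d-1}{d}\Bigl(\log(C_3\delta/C_1') - (n-k)\chi(p)\Bigr).
\]
Substituting this into
\[
\log|(f^n)'(x)| \le (d-1)\log|x-c| + (n-k)\chi(p) + \log(C_1 C_2)
\]
cancels a $\frac{d-1}{d}$-fraction of the $(n-k)\chi(p)$ term and leaves
\[
\log|(f^n)'(x)| \le \frac{n-k}{d}\chi(p) + C \le n\,\chi_{\ess}(c) + C,
\]
for a constant $C$ depending on $f, c, k, \delta$ but not on $n$ or $x$, which is the claim.

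The main technical obstacle is establishing the quantitative backward-contraction estimate for $|y - p|$ uniformly over $n$: one must rule out that the shadowed orbit accidentally drifts into a $\delta$-neighborhood of $\Sigma_0$, where contraction could fail for arbitrarily many iterates and spoil the exponential bound. This is exactly where the disjointness of $\Sigma_+$ and $\Sigma_0$, the choice of $\delta$, and the fact that the entire forward orbit of $p$ lies in $\Sigma_+$ are essential; once these are in place, the estimate reduces to standard hyperbolic linearization of $f^m$ at the expanding orbit of $p$.
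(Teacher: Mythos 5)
Your proof is correct and follows essentially the same route as the paper: split $f^n=f^{n-k}\circ f^k$, use the degree-$d$ local behavior of $f^k$ at $c$ to tie $|(f^k)'(x)|$ and $|x-c|$ to $\dist(f^k(x),p)$, and use backward contraction along the shadowed expanding orbit of $p$ to bound $\dist(f^k(x),p)$ exponentially in $n-k$, then combine. The paper's version is terser (it writes the same chain of estimates with $\sim$ and $\mathcal{O}$ and leaves the shadowing step implicit), while you spell out the inductive shadowing and the partition $\Sigma=\Sigma_0\cup\Sigma_+$, but the substance is identical.
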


\begin{proof}
Put~$d \eqdef \deg_{f^k}(c)$. To prove the lemma, it suffices to
notice that if~$\delta$ is sufficiently small then for any orbit
piece $y$, $f(y)$, $\ldots$, $f^m(y)$ that stays $\delta$-close to
the periodic orbit of~$p = f^k(c)$ we have $\lvert (f^m)'(y)
\rvert \sim e^{m\chi(p)} = e^{m d \chi_{\ess}(c)}$. Thus
\[ \dist(f^k(x), f^k(c))
= \mathcal{O} \left( |(f^{n - k})'(p)|^{-1} \right)
= \mathcal{O} \left( e^{- n d\chi_{\ess}(c)} \right). \]
On the other hand, $\dist(x, c) \sim \dist(f^k(x), f^k(c))^{1/d}$, so
\[
    \lvert (f^k)'(x)\rvert
    \sim
    \dist(f^k(x), f^k(c))^{(d - 1)/d}
    =
    \mathcal{O} \left( e^{- n (d - 1) \chi_{\ess}(c)} \right),
\]
and $\lvert (f^n)'(x)\rvert = \mathcal{O} \left( e^{n\chi_{\ess} (c)} \right)$.
\end{proof}

Given a subset~$V$ of~$\overline{\bC}$, let
   \begin{equation}\label{e.sup}
       \chi^+(J\setminus V)\eqdef
       \limsup_{n\to\infty}\sup_{x\in J\setminus V}
       \frac 1 n \log\,\lvert (f^n)'(x)\rvert.
   \end{equation}

\begin{lemma}\label{l.heuleni}
If~$f$ is non-exceptional then there is an ergodic measure~$\mu$ supported on~$J$ and such that~$\chi(\mu) = \chi^+(J)$.
If~$f$ is exceptional and~$V$ is a neighborhood of~$\Sigma$, then one of the following cases holds:
   \begin{enumerate}
       \item either there exists an ergodic measure~$\mu$ such that
       \[   \mu(\Sigma) = 0
            \quad\text{ and }
        \quad\chi(\mu)\ge\chi^+(J\setminus V),
    \]
       \item or \[\chi^+(J\setminus V) \le \chi_{\ess}^+.\]
   \end{enumerate}
\end{lemma}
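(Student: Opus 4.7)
The plan is to extract an $f$-invariant measure $\mu$ as a weak$^*$ limit of empirical measures along a sequence of orbits realizing $\chi^+$, and then use ergodic decomposition together with Lemma~\ref{l.neu} to control the part of $\mu$ sitting on $\Sigma$. In the non-exceptional case, where $\Sigma=\emptyset$, this is direct: pick $x_n\in J$ and $N_n\to\infty$ with $\tfrac{1}{N_n}\log|(f^{N_n})'(x_n)|\to\chi^+(J)$, and let $\mu$ be a weak$^*$ subsequential limit of $\mu_n\eqdef\tfrac{1}{N_n}\sum_{j=0}^{N_n-1}\delta_{f^j(x_n)}$. Upper semi-continuity of $\log|f'|$ (which is $-\infty$ on $\Crit$, continuous elsewhere, and bounded above for the spherical derivative) gives $\int\log|f'|\,d\mu\geq\chi^+(J)$, while Birkhoff's theorem forces $\chi(\mu)\leq\chi^+(J)$; an ergodic decomposition then supplies a component $\mu'$ with $\chi(\mu')=\chi^+(J)$.

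In the exceptional case, assume $\chi^+(J\setminus V)>\chi_{\ess}^+$ (otherwise the second alternative already holds). Running the same construction with $x_n\in J\setminus V$ yields an $f$-invariant $\mu$ with $\int\log|f'|\,d\mu\geq\chi^+(J\setminus V)>0$, which forces $\mu(\Crit)=0$. The central step is to prove
\begin{equation}
  \chi^+(J\setminus V)\leq\mu(\Sigma)\,\chi_{\ess}^+ + \int_{J\setminus\Sigma}\log|f'|\,d\mu. \tag{$\ast$}
\end{equation}
Once $(\ast)$ is in hand, since $\Sigma$ and $\mu$ are both $f$-invariant so is $\mu|_{J\setminus\Sigma}$; setting $\alpha\eqdef\mu(\Sigma)$ and $\tilde\mu_{\neg\Sigma}\eqdef(1-\alpha)^{-1}\mu|_{J\setminus\Sigma}$, $(\ast)$ reads $\chi^+(J\setminus V)\leq\alpha\chi_{\ess}^+ + (1-\alpha)\chi(\tilde\mu_{\neg\Sigma})$. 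The case $\alpha=1$ is excluded by the standing assumption, and for $\alpha<1$ the bound together with $\chi_{\ess}^+<\chi^+(J\setminus V)$ yields $\chi(\tilde\mu_{\neg\Sigma})\geq\chi^+(J\setminus V)$; an ergodic component of $\tilde\mu_{\neg\Sigma}$ then realizes the first alternative.

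To prove $(\ast)$, fix $\delta>0$ small enough that $V_\delta\eqdef B(\Sigma,\delta)\subset V$ and that Lemma~\ref{l.neu} applies at every $c\in f^{-1}(\Sigma_+)\setminus\Sigma_+$ with a common constant $C_0$, and choose $\delta$ generic so that $\mu(\partial V_\delta)=0$. Set $A_\delta\eqdef J\setminus(V_\delta\cup f^{-1}(V_\delta))$. For $y_j=f^j(x_n)$, split the indices into excursion indices (those $j$ with $y_j\in V_\delta$ or $y_{j+1}\in V_\delta$) and free indices (those $j$ with $y_j\in A_\delta$); each maximal excursion $[s_i,s_i+L_i-1]$ begins with $y_{s_i}\in f^{-1}(V_\delta)\setminus V_\delta$, hence close to a critical preimage of some periodic point in $\Sigma$, so Lemma~\ref{l.neu} gives $\log|(f^{L_i})'(y_{s_i})|\leq L_i\chi_{\ess}^+ + C_0$ (excursions near neutral $\Sigma_0$-orbits give the even smaller rate $0$). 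Summing, and using that the total excursion length equals $\mu_n(V_\delta)N_n + N_n^\delta$ with $N_n^\delta$ the excursion count, one obtains
\[
  \tfrac{1}{N_n}\log|(f^{N_n})'(x_n)|\leq\mu_n(V_\delta)\chi_{\ess}^+ + \tfrac{N_n^\delta}{N_n}(\chi_{\ess}^+ + C_0) + \int_{A_\delta}\log|f'|\,d\mu_n.
\]
Letting $n\to\infty$ along the subsequence---using upper semi-continuity of $\log|f'|$ on the closed set $A_\delta$ and $\mu(\partial A_\delta)=0$---and then $\delta\to 0$ along a generic sequence, one has $\mu(V_\delta)\to\mu(\Sigma)$ by outer regularity, $\int_{A_\delta}\log|f'|\,d\mu\to\int_{J\setminus\Sigma}\log|f'|\,d\mu$ by monotone convergence on positive and negative parts (both integrable, since $\chi(\mu)$ is finite and $\mu(f^{-1}(\Sigma)\setminus\Sigma)\leq\mu(\Crit)=0$), and $\rho^\delta\eqdef\lim_n N_n^\delta/N_n = \mu(f^{-1}(V_\delta)\setminus V_\delta)\to\mu(f^{-1}(\Sigma)\setminus\Sigma)=0$. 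These three limits combine to give $(\ast)$.

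The principal obstacle is the derivation of $(\ast)$, and inside it the claim $\rho^\delta\to 0$: it rests on the geometric fact that every entry into $V_\delta$ must cross a shrinking neighborhood of the finite set $f^{-1}(\Sigma)\setminus\Sigma\subset\Crit$, combined with $\mu(\Crit)=0$, itself forced by $\int\log|f'|\,d\mu$ being finite. The orbit decomposition, the upper semi-continuity arguments, and the $f$-invariance of $\mu|_{J\setminus\Sigma}$ are routine once these ingredients are in place.
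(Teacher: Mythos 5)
Your proof is correct. It shares the paper's core ingredients---the limit measure $\mu$ of empirical measures along near-optimal orbits, the excursion decomposition of the orbit, Lemma~\ref{l.neu}, the observation that every entry into a shrinking neighborhood of $\Sigma$ passes close to $f^{-1}(\Sigma)\setminus\Sigma\subset\Crit$, and $\mu(\Crit)=0$ forced by $\chi(\mu)>0$---but organizes them differently. The paper extracts an ergodic component $\mu'$ of $\mu$ with $\chi(\mu')\geq\chi^+(J\setminus V)$ and branches on whether $\mu'(\Sigma)=0$ or $\supp\mu'\subset\Sigma$; in the latter case it runs the orbit analysis to conclude $\chi^+(J\setminus V)\leq\chi_{\ess}^+$, relying on $(M_n+N_n)/n\to1$ (the orbit asymptotically spends all of its time near $\Sigma$). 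You instead keep the full limit measure $\mu$, do the excursion bookkeeping once to establish the affine inequality $(\ast)$, and then read off both alternatives from $(\ast)$ by restricting $\mu$ to $J\setminus\Sigma$. Your packaging has the advantage of treating the intermediate regime $0<\mu(\Sigma)<1$ uniformly: there the paper's dichotomy on a single ergodic component $\mu'$ does not by itself supply $(M_n+N_n)/n\to1$, which is a property of the full limit $\mu$ rather than of $\mu'$, and your $(\ast)$ makes the required interpolation explicit. The cost is the extra care needed to justify $\rho^\delta\to0$, which, as you correctly identify, is the principal technical point of your variant.
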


\begin{proof}
Let~$V$ be empty if~$f$ is not exceptional and let~$V$ be a neighborhood of~$\Sigma$ otherwise.
Without loss of generality in the latter case we can assume that $V$ is open.
Let~$\delta > 0$ be given by Lemma~\ref{l.neu}.
For each $n\ge1$ let $x_n\in J\setminus V$ be a point satisfying
\[
\frac 1 n \log\,\lvert (f^n)'(x_n)\rvert
=
\sup_{x\in J\setminus V} \frac 1 n \log\,\lvert (f^n)'(x)\rvert
\]
and consider the probability measure
\[
   \mu_n\eqdef \frac 1 n \sum_{k=0}^{n-1}\delta_{f^k(x_n)}.
\]
Consider a measure~$\mu$ that is accumulated by the sequence of
measures $\{\mu_n\}_{n\ge1}$ in the weak* topology. Notice
that~$\mu$ is $f$-invariant and satisfies $\chi(\mu)\ge\chi^+(J\setminus V)$.
It follows that there is a $f$-invariant an ergodic measure~$\mu'$
such that $\chi(\mu')\ge\chi^+(J\setminus V)$. If~$f$ is not exceptional or
if~$f$ is exceptional and~$\mu'(\Sigma) = 0$, then we are done.

To prove the remaining case, assume that~$f$ is exceptional and $\supp \mu' \subset \Sigma$.
Fix~$\varepsilon > 0$ and let~$\delta > 0$ and~$C > 0$ be given by
Lemma~\ref{l.neu}. Augmenting~$C> 0$ and reducing~$\delta$ if
necessary we can assume that~$B(\Sigma, \delta) \subset V$ and that for
each~$x \in J$ and each integer~$\ell \ge 1$ so that $f^j(x) \in
B(\Sigma_0, \delta)$ for every~$j \in \{ 0, \ldots, \ell - 1\}$, we have
\[
    \lvert(f^{\ell})'(x)\rvert
    \le \exp \left( \ell \varepsilon  + C \right).
\]
Let $V_+$ be a neighborhood of $\Sigma_+$ that is contained in
$B(\Sigma_+, \delta)$ and 
such that all preimages of a point in $V_+$ are either in $V_+$ or close to
$f^{-1}(\Sigma_+)\setminus \Sigma_+$. For an integer~$n \ge 1$ put
$$ N_n \eqdef \left\{ j \in \{0, \ldots, n - 1 \} \colon f^j(x_n) \in B(\Sigma_0, \delta) \right\} $$
and
\[
M_n \eqdef \left\{ j \in \{0, \ldots, n - 1 \} \colon f^j(x_n) \in V_+
\right\}.
\]
We have~$\lim_{n \to + \infty} (M_n+N_n) / n = 1$. Fix~$n$ and
let~$k \ge 1$ and let $j_1$, $\ldots$, $j_k$ be all the integers $j
\in \{1, \ldots, n - 1 \}$ such that~$f^{j - 1}(x_n) \notin V_+$
and~$f^j(x_n) \in V_+$. Similarly, let $k'$ be the number of
blocks of trajectory of $x$ contained in $B(\Sigma_0, \delta)$.
For each~$i \in \{1, \ldots, k \}$ let~$j_i'$ be the largest
integer~$j \in \{ j_i, \ldots, n - 1 \}$ such that for each~$s \in
\{j_i, \ldots, j \}$ we have~$f^s(x_n) \in B(\Sigma, \delta)$.
Then
\[
    \sum_{i = 1}^k (j_i' - j_i+1) = M_n
    \quad\text{ and }\quad
    \max \{k,k'\} \le n - (M_n +N_n).
\]
Furthermore, for each~$i \in \{1,
\ldots, k \}$ so that~$f^{j_i}(x_n) \in V_+$ the point~$f^{j_i -
1}(x_n)$ is close to~$f^{-1}(\Sigma)$ and we thus have
$$ \log \,\lvert(f^{j_i' - j_i + 1})'(f^{j_i - 1}(x_n))\rvert
\le (j_i' - j_i + 2) \chi_{\ess}^+ + C. $$ Hence, for
$C'=2C+\log\sup_J |f'|$ we have
\[\begin{split}
\log \,\lvert(f^n)'(x_n)\rvert &\le (M_n + k) \chi_{\ess}^+ + k C
+ N_n \varepsilon + k' C  \\
&\phantom{\le}\quad+(n-M_n-N_n-k) \log \sup_{J}\, \lvert f'\rvert
\\
&\le
n \max \{ \chi_{\ess}^+, \varepsilon \} + (n - M_n-N_n) C'.
\end{split}\]
Since~$\varepsilon > 0$ is arbitrary, this implies
$$ \chi^+(J\setminus V)
=
\limsup_{n \to + \infty} \frac{1}{n} \log \,\lvert(f^n)'(x_n)\rvert
\le
\chi_{\ess}^+ $$
finishing the proof of the lemma.
\end{proof}

\begin{lemma}\label{l:periodicexcep}
    Suppose~$f$ is exceptional. Then for each~$c \in f^{-1}(\Sigma_+) \setminus \Sigma_+$ and~$\varepsilon > 0$ there is a periodic point~$q$ close to~$c$ such that
    $$ \chi(q) \ge \chi_{\ess}(c) - \varepsilon. $$
    In particular, $\widetilde{\alpha}^+ \ge \chi_{\ess}^+$.
\end{lemma}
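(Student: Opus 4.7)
I plan to construct the periodic point $q$ by solving $f^m(q) = q$ perturbatively for $m = k + nN$ large, using the Koenigs linearization of $f^n$ at $p$. By the paragraph preceding Lemma~\ref{l.neu}, for the assertion $\widetilde\alpha^+ \ge \chi_{\ess}^+$ I may assume $k = 1$, so $p \eqdef f(c) \in \Sigma_+$ has minimal period $n$, multiplier $\lambda \eqdef (f^n)'(p)$ of modulus $e^{n\chi(p)}$, and $d \eqdef \deg_f(c) \ge 2$; thus $\chi_{\ess}(c) = \chi(p)/d$, and near $c$ one has $f(z) = p + a(z-c)^d + O((z-c)^{d+1})$ with $a\neq 0$. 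Once a repelling periodic point $q \in J\setminus\Sigma$ with $\chi(q) \ge \chi_{\ess}(c) - \varepsilon$ has been produced, the Dirac measure on its orbit is ergodic and supported outside $\Sigma$, so Corollary~\ref{def} yields $\widetilde\alpha^+ \ge \chi(q)$, and varying $c$ gives $\widetilde\alpha^+ \ge \chi_{\ess}^+$.

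\textbf{Construction via linearization and Rouch\'e.} Let $\Psi\colon\bC\to\overline\bC$ be the Koenigs map of $f^n$ at $p$, normalized by $\Psi(0)=p$, $\Psi'(0)=1$, and $\Psi(\lambda\zeta) = f^n(\Psi(\zeta))$. Topological exactness of $f$ on $J$ together with the iterated identity $\Psi(\lambda^N\zeta) = f^{nN}(\Psi(\zeta))$ ensures $c \in J \subset \Psi(\bC)$, so I pick $\zeta^\ast \in \bC$ with $\Psi(\zeta^\ast)=c$ and $\Psi'(\zeta^\ast)\neq 0$, which is possible since the critical points of $\Psi$ form a discrete set. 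Setting $m = 1 + nN$ and $g_N\eqdef f^m$, and using $f^{nN}(w) = \Psi(\lambda^N\Psi^{-1}(w))$ for $w$ near $p$, one obtains
\[
g_N(c+\eta) \;=\; \Psi\bigl(\lambda^N a\eta^d + O(\lambda^N\eta^{d+1})\bigr)
\]
for $\eta$ small. Rescaling $\eta = u\lambda^{-N/d}$ converts the fixed-point equation $g_N(c+\eta) = c + \eta$ into $\Psi(au^d + O(|\lambda|^{-N/d})) = c + O(|\lambda|^{-N/d})$, whose unperturbed form $\Psi(au^d) = c$ has $d$ simple roots $u_1,\ldots,u_d$, namely the $d$-th roots of $\zeta^\ast/a$. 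A Rouch\'e argument on small circles around each $u_j$ in the $u$-plane then produces, for every sufficiently large $N$, exactly $d$ solutions $q_j^{(N)} = c + u_j^{(N)}\lambda^{-N/d}$ of $f^m(q) = q$ with $u_j^{(N)} \to u_j$ as $N\to\infty$.

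\textbf{Exponent, wrap-up and main obstacle.} Differentiating $f^m = f^{nN}\circ f$ via the chain rule and $f^{nN}(w) = \Psi(\lambda^N \Psi^{-1}(w))$ yields $(f^m)'(q_j^{(N)}) = \Psi'(\zeta^\ast)\,d a\,u_j^{d-1}\,\lambda^{N/d}(1+o(1))$, hence $\chi(q_j^{(N)}) = \log|(f^m)'(q_j^{(N)})|/(1+nN) \to \log|\lambda|/(dn) = \chi_{\ess}(c)$ as $N\to\infty$. For $N$ large the points $q_j^{(N)}$ are then repelling, hence in $J$; and since $q_j^{(N)} \to c \notin \Sigma$ with $\Sigma$ finite, $q_j^{(N)} \in J\setminus\Sigma$, which completes the construction. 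The case $k \ge 2$ is handled identically with $f^k$ in place of $f$ and $d = \deg_{f^k}(c)$; the subcase $f^k(c) \in \Sigma_0$ is trivial since then $\chi_{\ess}(c) = 0$ and any repelling periodic point close to $c$ (density of repelling periodic points in $J$) already suffices. The main obstacle is making the Rouch\'e count rigorous: one must verify that on each circle $|u-u_j|=\delta$ the remainder $O(|\lambda|^{-N/d})$ is strictly dominated by the leading quantity $|\Psi(au^d) - c|$, whose infimum on the circle is a positive constant independent of $N$. This works precisely because of the critical scaling $\lambda^N\eta^{d+1} = O(|\lambda|^{-N/d}) \to 0$ combined with the choice $\Psi'(\zeta^\ast) \ne 0$, which makes $\Psi$ a local biholomorphism near $\zeta^\ast$ and provides the required uniform lower bound on the leading term.
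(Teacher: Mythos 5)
Your construction is correct in substance, but it takes a genuinely different route from the paper's. The paper works purely topologically: it chooses a local inverse $\phi$ of $f^\ell$ at the repelling periodic point $p$, pulls back a small disk $W$ (a component of $f^{-m}(B(c,\rho))$ near $p$) by $\phi^n$, then takes a component $W_n$ of $f^{-k}(\phi^n(W))$ compactly contained in $B(c,\rho)$; the map $f^{k+n\ell+m}\colon W_n\to B(c,\rho)$ is a proper expanding self-cover, so a fixed point $q_n$ falls out of topology, and the three distance/derivative estimates near $p$, along the bridge, and near $c$ produce the exponent. You instead push the whole problem into a Koenigs coordinate $\Psi$ at $p$, rescale, and use Rouch\'e to locate the periodic point explicitly as $q=c+u\lambda^{-N/d}$. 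Your route is more explicit and gives the full multiplicity $d$ of periodic points accumulating on $c$; the paper's is more elementary, avoiding any linearization theory, and is also slightly more direct about handling $k\geq 2$ (where your sketch ``replace $f$ by $f^k$'' works but needs the bookkeeping $\chi_{f^k}(p)=k\chi(p)$, period of $p$ under $f^k$, etc., which you do not spell out).

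There is one step you should not hand-wave: you justify the existence of $\zeta^\ast$ with $\Psi(\zeta^\ast)=c$ and $\Psi'(\zeta^\ast)\neq 0$ by saying ``the critical points of $\Psi$ form a discrete set.'' That is not sufficient -- $\Psi^{-1}(c)$ is itself discrete, and discreteness alone does not preclude $\Psi^{-1}(c)\subset\Crit(\Psi)$. What you actually need is that $c$ admits an infinite backward orbit in $J$ disjoint from $\Crit$, so that pulling this branch into the linearization neighborhood of $p$ yields a noncritical Koenigs preimage $\zeta^\ast$. This is precisely where the hypothesis $c\notin\Sigma$ enters (via maximality of $\Sigma$ and a K\"onig's-lemma argument on the backward tree), and it is the same nontrivial point the paper invokes when it writes ``Since $c\notin\Sigma$ there is a point $x\in B(p,\delta)$ and an integer $m\geq 1$ such that $f^m(x)=c$ and $(f^m)'(x)\neq 0$.'' You should replace the ``discrete set'' justification with this argument, since as written your proof does not make clear where the crucial assumption $c\notin\Sigma$ is being used for the construction (you use it explicitly only to conclude $q\in J\setminus\Sigma$ at the end).
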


\begin{proof}
Let~$k \ge 1$ be the least integer such that~$p = f^k(c) \in \Sigma$ is periodic, put~$d \eqdef \deg_{f^k}(c)$ and let~$\ell \ge 1$ be the period of~$p$.
Let~$\delta > 0$ be sufficiently small so that there is a local inverse~$\phi$ of~$f^\ell$ fixing~$z = p$ defined on~$B(p, \delta)$, in such a way that~$\phi(B(p, \delta))$ is compactly contained in~$B(p, \delta)$ and for some constant~$\gamma_0 > 0$ and every~$n \ge 1$ and~$x \in \phi^n(B(p, \delta))$ we have~$|(f^{n \ell})'(x)| \ge \gamma_0 e^{n \ell \chi(p)}$.
Since~$c \not \in \Sigma$ there is a point~$x \in B(p, \delta)$ and an integer~$m \ge 1$ such that~$f^m(x) = c$ and such that~$(f^m)'(x) \neq 0$.
Let~$\rho > 0$ be sufficiently small so that the connected component~$W$ of~$f^{-m}(B(c, \rho))$ containing~$x$ is such that~$\overline{W} \subset B(p, \delta) \setminus \{ p \}$ and~$\gamma_1 = \inf_{z \in W} |(f^m)'(z)| > 0$.
Then for every sufficiently large integer~$n \ge 1$ there is a connected component~$W_n$ of~$f^{-k}(\phi^n(W))$ compactly contained in~$B(c, \rho)$.
It follows that~$W_n$ contains a periodic point~$q_n$ of~$f$ of period~$k + n \ell + m$.
We will now estimate its Lyapunov exponent.
Since~$\dist(\phi^n(W), p) \sim e^{-n \ell \chi(p)}$ we have $\dist(W_n, c) \sim e^{-n \ell \chi(p)}$, so there is a constant~$\gamma_2 > 0$ such that for every~$z \in W_n$ we have
$$ |(f^k)'(z)| \ge \gamma_2 e^{-n \ell \chi(p) (d - 1)/d}. $$
Therefore
$$ |(f^{k + n \ell + m})'(q_n)|
\ge
\gamma_0 \gamma_1 \gamma_2 e^{-n \ell \chi(p)/d}
=
\gamma_0 \gamma_1 \gamma_2 e^{-n \ell \chi_{\ess}(c)}, $$
and~$\liminf_{n  \to \infty} \chi(q_n) \ge \chi_{\ess}(c)$.
\end{proof}

\begin{proof}[Proof of Proposition~\ref{prop:monic}]
	In view of Lemma~\ref{l.heuleni} and Corollary~\ref{def}, the proposition is a direct consequence of the Lemma~\ref{l:periodicexcep}.
\end{proof}

We finally state the following immediate corollary.

\begin{corollary}
    Let
    \[
        D\eqdef
        \max
        \deg_{f^{k(c)}}(c),
    \]
    where the maximum is taken over all critical points $c\in f^{-1}(\Sigma)\setminus \Sigma$ and where $k(c)$ denotes the minimal integer such that $f^{k(c)}(c)$ is a periodic point. We have
    \[
        \alpha^+ \leq D\, \widetilde{\alpha}^+.
    \]
\end{corollary}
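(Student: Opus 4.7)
The plan is to reduce the inequality to a statement about individual expanding periodic orbits in $\Sigma_+$ and then invoke Lemma~\ref{l:periodicexcep}. If $f$ is non-exceptional then $\Sigma=\emptyset$, so $\alpha^+ = \widetilde{\alpha}^+$ and the inequality is trivial; assume henceforth that $f$ is exceptional. An ergodic measure giving positive mass to the finite forward-invariant set $\Sigma$ is necessarily supported on a single periodic orbit in $\Sigma$, and neutral orbits in $\Sigma_0$ contribute only exponent zero. Combined with Corollary~\ref{def}, this gives
\[
\alpha^+ = \max\Big(\widetilde{\alpha}^+,\ \sup_{p}\chi(p)\Big),
\]
where $p$ ranges over expanding periodic points in $\Sigma_+$. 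Hence it suffices to prove $\chi(p)\le D\,\widetilde{\alpha}^+$ for every such $p$.

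The second step is to produce, for each expanding periodic $p\in\Sigma_+$ with periodic orbit $O$, a critical point $c\in f^{-1}(\Sigma)\setminus\Sigma$ with $f^{k(c)}(c)\in O$. Granted such a $c$, the definition of $\chi_{\ess}$ yields $\chi_{\ess}(c) = \chi(p)/\deg_{f^{k(c)}}(c) \ge \chi(p)/D$, and Lemma~\ref{l:periodicexcep} then delivers $\widetilde{\alpha}^+\ge \chi_{\ess}(c)\ge \chi(p)/D$, as required.

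To construct $c$, I would propagate the exceptional condition $f^{-1}(\Sigma)\setminus \Sigma\subset\Crit$ backwards along $\Sigma$. Set $B_0=O$ and inductively $B_{j+1}=B_j\cup(f^{-1}(B_j)\cap\Sigma)$. Since $\Sigma$ is finite, this chain stabilizes at some $B_\infty\subset\Sigma$. If no critical preimage in $f^{-1}(\Sigma)\setminus\Sigma$ were ever encountered at any step, then $f^{-1}(B_\infty)\subset B_\infty$; combined with the forward-invariance inclusion $B_\infty\subset f^{-1}(B_\infty)$, this would force $B_\infty$ to be totally invariant. But a finite totally invariant subset of a rational map of degree at least two must lie in the classical grand-orbit exceptional set, and therefore in the Fatou set, contradicting $O\subset J$. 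Hence some $c\in f^{-1}(\Sigma)\setminus\Sigma$ does map into some $B_j$, and following its forward iterates through the chain of $B_j$'s into $B_0=O$ gives $f^{k(c)}(c)\in O$.

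The main obstacle is this last step: establishing that \emph{every} expanding periodic orbit inside $\Sigma_+$ is reached by some critical preimage in $f^{-1}(\Sigma)\setminus\Sigma$. Everything else is a direct combination of Lemma~\ref{l:periodicexcep} with the definition of $D$; the totally-invariant-set argument is where the global structure of exceptional rational maps really enters.
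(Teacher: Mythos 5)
Your argument is correct and is exactly what the paper must have in mind in calling the corollary ``immediate.'' The decomposition of $\alpha^+$ via ergodicity and Corollary~\ref{def} (an ergodic measure giving $\Sigma$ positive mass is a periodic-orbit measure on $\Sigma$; measures giving $\Sigma$ zero mass contribute at most $\widetilde\alpha^+$), the reduction to bounding $\chi(p)$ for expanding periodic $p\in\Sigma_+$, and the invocation of Lemma~\ref{l:periodicexcep} to get $\widetilde\alpha^+\ge\chi_{\ess}(c)\ge\chi(p)/D$ are all as intended. You correctly isolate the one genuinely nontrivial step, namely that every expanding periodic orbit $O\subset\Sigma_+$ is the eventual image of some $c\in f^{-1}(\Sigma)\setminus\Sigma$, and your proof of it is sound: if the backward orbit $B_\infty$ of $O$ inside $\Sigma$ received no critical preimage from outside $\Sigma$, then $f^{-1}(\Sigma)\setminus\Sigma\subset\Crit$ together with the stability of $B_\infty$ would force $f^{-1}(B_\infty)=B_\infty$, so $B_\infty$ would be a finite completely invariant set, hence contained in the classical Montel exceptional set and in the Fatou set, contradicting $O\subset J$. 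As a byproduct this also shows $f^{-1}(\Sigma)\setminus\Sigma\neq\emptyset$ whenever $f$ is exceptional, so the maximum defining $D$ is taken over a nonempty set.
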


\appendix
\section{An alternative proof of the completeness of the spectrum. Specification Property}\label{s:alternative}

The purpose of this section is to give an alternative proof of Proposition~\ref{prop:monic}. We will obtain this proposition as an easy consequence of Lemma~\ref{lem:caf2}  below. We shall conclude the Appendix with a more precise version of this lemma, corresponding to Bowen's {\it periodic specification property}, \cite{Bow:71}.

\begin{lemma}\label{lem:caf2}
Given a neighborhood $V\supset \Sigma$, for every $\varepsilon>0$
there exists a periodic point $p\in J\setminus\Sigma$, so that
$\chi(p)\ge \chi^+(J\setminus V)-\varepsilon$.
\end{lemma}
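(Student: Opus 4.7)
The plan is to apply a closing-lemma argument: pick a long orbit in $J\setminus V$ of nearly maximal forward expansion, pull it back to a Koebe disk $D$ of definite size on which a branch of $f^{-n}$ is strongly contracting, and then close the orbit via a short return trip landing in~$D$. The locally eventually onto property of $f$ on $J$, together with Lemma~\ref{lem:caf1}, supplies the return trip clear of $\Crit$, and Banach's contraction principle produces the periodic point.

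For any $\varepsilon'>0$ and arbitrarily large~$n$ pick $x\in J\setminus V$ with $\tfrac{1}{n}\log\lvert(f^n)'(x)\rvert\ge \chi^+(J\setminus V)-\varepsilon'$; after a slight perturbation we may also assume that $x$ is not in the forward orbit of any critical point. A Pliss-type selection along $x,f(x),\ldots,f^{n-1}(x)$ provides a time $k$ with $n-k\ge cn$ for some $c=c(\varepsilon')>0$ such that the inverse branch $\phi=f^{-(n-k)}_{f^k(x)}$ of $f^{n-k}$ is univalent on a disk $D:=B(f^n(x),\rho_0)$ with distortion bounded by a universal constant~$K$, where $\rho_0>0$ depends only on $f$ and $\chi^+(J\setminus V)$. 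Relabelling $(f^k(x),n-k)$ as $(x,n)$, we may assume that the inverse branch $\phi:D\to U\subset B\bigl(x,K\rho_0/\lvert(f^n)'(x)\rvert\bigr)$ of $f^n$ itself has these properties, with $\phi(f^n(x))=x$.

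Apply Lemma~\ref{lem:caf1} with neighbourhood $V$ and density parameter $\rho_0/2$ to the point $x$. Since $x$ is not in the forward orbit of $\Crit$ we are in Case~1 of the proof of that lemma, obtaining an integer $N\ge 1$ and a number $\delta>0$, both independent of $n$, such that $A:=f^{-N}(x)$ is $(\rho_0/2)$-dense in $J$ and $\bigcup_{s=0}^{N-1}f^s(A)$ is $\delta$-far from $\Crit$. By density choose $\tilde y\in A\cap D$. The forward orbit $\tilde y,f(\tilde y),\ldots,f^{N-1}(\tilde y),x$ stays $\delta$-far from $\Crit$, so the inverse branch $\psi=f^{-N}_{x\to\tilde y}$ of $f^N$ is well defined on a ball $B(x,r_1)$ with $\lvert\psi'(x)\rvert\ge (\sup_J\lvert f'\rvert)^{-N}$ and uniformly bounded distortion.

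The composition $\Phi:=\phi\circ\psi$ is a branch of $f^{-(n+N)}$ defined near~$x$. We have $\lvert\Phi(x)-x\rvert=\lvert\phi(\tilde y)-\phi(f^n(x))\rvert\le K\rho_0/\lvert(f^n)'(x)\rvert$ and $\lvert\Phi'(x)\rvert\le K(\sup_J\lvert f'\rvert)^N/\lvert(f^n)'(x)\rvert$, both tending to $0$ as $n\to\infty$. Therefore, for all sufficiently large $n$, $\Phi$ maps a small closed ball $\overline{B(x,r)}\subset J\setminus V$ into itself and is a strict contraction; Banach's fixed-point theorem yields a unique $p\in B(x,r)$ with $\Phi(p)=p$, which is a repelling periodic point of $f$ of period $n+N$. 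Since $\overline{B(x,r)}\cap \Sigma=\emptyset$ we have $p\in J\setminus\Sigma$, and
\[
\chi(p)=\frac{\log\lvert(f^{n+N})'(p)\rvert}{n+N}\ge \frac{\log\lvert(f^n)'(x)\rvert-\log K-N\log\sup_J\lvert f'\rvert}{n+N}\ge \chi^+(J\setminus V)-\varepsilon,
\]
provided $\varepsilon'$ is sufficiently small and $n$ sufficiently large. The main obstacle is the Pliss-type reduction producing the uniform Koebe scale $\rho_0$: without a scale independent of $n$, the closing time $N$ would have to grow with $n$ and the exponent estimate would collapse.
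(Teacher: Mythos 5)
Your overall architecture---select a Pliss hyperbolic time, pull back a definite Koebe disk, close up using the $\delta$-far-from-$\Crit$ branch from Lemma~\ref{lem:caf1}, and extract a fixed point of the resulting contraction---is the same as the paper's. The exponent computation at the end is also fine. But the step you flag yourself as ``the main obstacle'' is precisely where the proof has a genuine gap: you assert that a Pliss-type selection produces a time $k$ such that the inverse branch of $f^{n-k}$ along the orbit is univalent, with bounded distortion, on a ball $D=B(f^n(x),\rho_0)$ of radius $\rho_0$ independent of $n$. This does not follow from the Pliss property alone. A Pliss hyperbolic time controls the partial products $\lvert(f^{n-m})'(f^m(x))\rvert$, not the individual factors, so the orbit can still pass arbitrarily close to $\Crit$ at some intermediate step $m$; in that case the pullback $B_m=f^{-(n-m)}_{f^m(x)}(D)$ would capture a critical point and univalence would fail. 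The paper supplies two additional ingredients that you omit. First, for $m$ with $f^{m+1}(x)\notin V$ it combines the Pliss lower bound with the fact that $\chi^+(J\setminus V)$ also bounds forward expansion, obtaining $\log\lvert f'(f^m(x))\rvert \ge -(n-m)\,2\varepsilon$ and hence $\dist(f^m(x),\Crit)\ge C e^{-2(n-m)\varepsilon}$; compared with the telescope estimate $\diam B_m \le rK_1 e^{-(n-m)(\chi^+-2\varepsilon)}$, this forces $B_m\cap\Crit=\emptyset$ once $r$ is small. Second, for the remaining $m$ with $f^{m+1}(x)\in V$ (orbit segments shadowing $\Sigma$) a different argument is needed, using that $B_m$ is tiny together with $f^{-1}(\Sigma)\setminus\Sigma\subset\Crit$ and forward-invariance of $\Sigma$ to derive a contradiction with $B(f^n(x),r)\cap\Sigma=\emptyset$.

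That last point exposes a second gap: you never arrange $f^n(x)\notin V$, so the ball $D$ centered at $f^n(x)$ need not be disjoint from $\Sigma$, and the structural argument above would not even get started. The paper devotes a full paragraph to increasing the Pliss time (at the cost of a small loss in $\varepsilon$) so that the endpoint lies outside $V$; after your relabeling $(f^k(x),n-k)\mapsto(x,n)$ you lose control of both endpoints. Finally, a small remark on Lemma~\ref{lem:caf1}: even when $x$ avoids the forward orbit of $\Crit$, what the lemma guarantees is a $\delta>0$ and integers $N\le M$, with possibly nonzero $i,j$, and the relevant point $z\in f^{-j}(f^i(x))$; one can take $i=j=0$ only because $x$ is not postcritical, but the scale $\rho_0/2$ at which you invoke it must be chosen \emph{before} Lemma~\ref{lem:caf1} is applied, and $\rho_0$ itself is exactly the quantity whose existence is in question. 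So the argument is circular at the one step that carries the load, and that step is the technical content of the lemma.
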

\begin{proof}
We first collect some preliminary definitions and results.

Recall that $n\ge1$ is said to be a \emph{Pliss hyperbolic time} for $x$ with
exponent $\chi$ if
\begin{equation}\label{e.Pliss}
   \log\,\lvert (f^{n-m})'(f^m(x))\rvert\ge (n-m)\chi
   \quad\text{ for every }m=0, \ldots, n-1.
\end{equation}

Given $\varepsilon>0$, by the telescope lemma, see \cite[Lemma
9]{GelPrzRam:} or \cite{Prz:90}, there exist positive constants
$K_1$, $R_1$ so that for every $r\in(0,R_1)$,  every Pliss
hyperbolic time $n$ for a point $x$ with exponent $\chi>0$, and
every $m=0$, $\ldots$, $n-1$ we have
\begin{equation}\label{eq:shrink}
   \diam B_m\le r K_1 e^{-(n-m)(\chi-\varepsilon)},
\end{equation}
where
\begin{equation}\label{def.pull}
   B_m\eqdef f^{-(n-m)}_{f^m(x)}\big(B(f^n(x),r)\big).
\end{equation}
Now let $r\in(0,\min\{R_1,\dist(\Sigma,\partial V\})$.

Let us briefly write $\chi^+=\chi^+(J\setminus V)$. By definition of $\chi^+$ there exists $\widetilde N\ge1$ so that
for every $n\ge \widetilde N$ we have
\begin{equation}\label{e.defchi}
    \sup_{x\in J\setminus V}\frac{a(x,n)}{n}
   \le \chi^+ + \varepsilon , \quad
   \text{ where }a(x,n)\eqdef \log\,\lvert (f^n)'(x)\rvert.
\end{equation}

On the other hand, for every $n$ large enough we can choose a point $x=x(n)\in J\setminus V$ so that $a(x,n)>n(\chi^+-\varepsilon/2)$. Notice that we can assume that $n$ is a Pliss hyperbolic time for $x(n)$ with exponent $\chi^+-\varepsilon$ and satisfies $n\ge \widetilde N$.

 More precisely for the original $n$ let $n'\in\{1,\ldots, n\}$ be an integer such that at $m=n'$ the expression
 \[
    A(m)\eqdef
    a(x,m) - m (\chi^+-\varepsilon)
\]
attains its maximum. Clearly $n'$ is a
 Pliss hyperbolic time for $x$. Moreover, since $A(n)\ge n\varepsilon/2$, $A(0)=0$, and the function $\log\, \lvert f'\rvert$ is upper bounded, we obtain
 $n'\to\infty$ as $n\to\infty$. So we can replace $n$ by $n'$, thus assuming we have a sequence of pairs $\{(x_j,n_j)\}_j$ so that $x_j\in J\setminus V$, $n_j$ is a Pliss hyperbolic time for $x_j$ with exponent $\chi^+-\varepsilon$, $n_j\ge \widetilde N$, and $n_j\to\infty$ as $j\to\infty$. In the sequel we shall omit the index $j$.

We can assume that, possibly after slightly increasing
$\varepsilon$, additionally we have $f^n(x)\notin V$.
Indeed, let $m_-\in\{0, \ldots,n-1\}$ be the largest integer such
that $y= f^{m_-}(x)\notin V$. Since we assume that $n$ is  a Pliss
hyperbolic time for $x$ and since
 $y$ is close to a critical point, $\lvert f'(y)\rvert$ is small and hence the number $n-m_-$ must be large by~\eqref{e.Pliss}. In particular, $n-m_- \ge\card\Sigma$. Recall that $\Sigma$ contains periodic points together with their non-critical pre-images. Let the forward trajectory of $y$ follow a periodic trajectory of a point $q\in\Sigma$. Denote by $N_q$ the least period of $q$. Then, by~\eqref{e.Pliss} we have
\[
a(f^{n-N_q}(x),N_q)\ge N_q (\chi^+ -\varepsilon),
\]
which yields $\chi(q)\ge \chi^+ -2\varepsilon$ provided $V$ is
small enough, where the factor 2 takes in account the distortion
in a neighborhood of the trajectory of $q$. So in the case that
$n$ is a Pliss hyperbolic time and $f^n(x)\in V$ we can consider
the smallest integer $m_+>n$ for which $f^{m_+}(x)\notin V$. Then
there exists an integer $m_+'$ between $m_+$ and $m_+ + N_q$ which
is a Pliss hyperbolic time for $x$ with exponent
$\chi^+-3\varepsilon$. Note that $m_+$ and $m_+'$ exist, provided
$\chi^+-3\varepsilon>0$ and $V$ is small enough.

By~\eqref{e.defchi} for any point $y=f^m(x)$ with $m< n-\widetilde
N$ and $f(y)\notin V$ we have
\[
   a(f(y),n-m-1)\le (n-m-1)(\chi^+ +\varepsilon).
\]
Thus, together with~\eqref{e.Pliss} with $\chi=\chi^+
-\varepsilon$, for any such $y$ we conclude
\[\begin{split}
(n-m)(\chi^+ - \varepsilon)
&\le a(y,n-m) = \log \,\lvert f'(y)\rvert +  a(f(y),n-m-1)\\
&\le\log\, \lvert f'(y)\rvert + (n-m-1)(\chi^+ + \varepsilon)
\end{split}\]
and hence
\[
   \log\,\lvert f'(y)\rvert
   \ge (n-m)(\chi^+ -\varepsilon) - (n-m-1)(\chi^++\varepsilon)
   > -(n-m)2\varepsilon.
\]
Therefore, such $y$ must be in some distance to critical points
and satisfy
\begin{equation}\label{eq:dist}
   \dist(y,\Crit)
   \ge C\cdot \,e^{-(n-m)2\varepsilon},
\end{equation}
where $C$ is some positive constant.

As concluded before, $B\eqdef B(f^n(x),r)$ and $\Sigma$ are
disjoint. Now we pull back $B$ and show that for $r$ small enough
no pullback $B_m$ defined in~\eqref{def.pull} contains a critical
point.  To show this let us assume that the initially chosen $r$
also satisfies $r<C/K_1\exp(\chi^+-4\varepsilon)$ and that
$\chi^+-4\varepsilon>0$.

First, if $m$ satisfies $0\le m<n-\widetilde N$ we consider two cases:\\[0.1cm]
\textbf{1) $f^{m+1}(x)\notin V$:}
   Then~\eqref{eq:dist} and~\eqref{eq:shrink} for $\chi=\chi^+-\varepsilon$ imply $B_m \cap \Crit=\emptyset$.\\[0.1cm]
\textbf{2) $f^{m+1}(x)\in V$:}
   Then~\eqref{eq:shrink} implies that $B_m$ is very small. So, if there were a critical point $c\in B_m$ then we would have $f(c)\in\Sigma$. Since $\Sigma$ is forward invariant, this would imply $\Sigma\cap B(f^n(x),r)\ne\emptyset$ which is a contradiction. Hence $B_m\cap\Crit=\emptyset$. \\[0.1cm]
Second, in the remaining finite number of cases if $m$ satisfies
$n-\widetilde N\le m<n$ we can assure, possibly after decreasing
$r$, not depending of $(x,n)$ (possible since $n$ are Pliss
hyperbolic times with common $\chi$), that
$B_m\cap\Crit=\emptyset$.
Thus, we can conclude that none of the pullbacks $B_m$ captures a
critical point and therefore $f^{n-m}$ is univalent on $B_m$ for
every $m=0$, $\ldots$, $n-1$.

Given $r$, by Lemma~\ref{lem:caf1} there exists $\delta>0$ and
positive integers $N$, $M$, $i$, and $j$ with $N\le M$ and $0\le
i$, $j\le M$, and a point $z\in f^{-j}(f^i(x))$ such that $A\eqdef
f^{-N}(z)$ is $r/2$-dense in $J$ and satisfies
$\dist(f^k(A),\Crit)\ge\delta$ for every $k=0$, $\ldots$, $N+j-1$.

Now  we can choose $\delta'\in(0,\delta)$ independent of $x$ so
that for this point $z\in f^{-j}(f^i(x))$ and every $k=0$,
$\ldots$, $N+j-1$ we have
\[
   f^k\Big(f^{-N}\big(\Comp_zf^{-j}(B(f^i(x),\delta'))\big)\Big)\cap\Crit=\emptyset,
\]
yielding that $f^{N+j}$ is univalent on $B(w)$ defined by
\[
   B(w)\eqdef f^{-(N+j)}_w\big(B(f^i(x),\delta')\big)
\]
for every $w\in A$ as well as
\[
   \diam B(w) \le r/3.\]

   Thus, if $n$ is large enough so that
$rK_1\exp(-(n-i)(\chi^+-2\varepsilon))<\delta'$ then with $y\eqdef f^i(x)$ and using \eqref{eq:shrink}
with $\chi=\chi^+ -\varepsilon$ we obtain
\[
   f^{-(n-i)}_y\big(B(f^n(x),r)\big) \subset B(y,\delta').
\]
and hence for some choice of $w\in A$ (that will in general depend
on $f^n(x)$) we have
\[
   \widetilde B(w)\eqdef f^{-(n-i+N+j)}\big(B(f^n(x),r)\big)
   \subset B(f^n(x),5r/6)
\]
and $f^{n-i+N+j}$ is univalent on $\widetilde B(w)$. Hence the
latter set contains a periodic point  $p$. Using \eqref{e.Pliss}
and distortion estimates, it can be achieved that $\chi(p)\ge
\chi^+-3\varepsilon$, if $n$ is large enough. This proves the
lemma.
\end{proof}


\begin{remark*}{\rm
The idea of the proof of Lemma~\ref{lem:caf2} is taken from an unpublished note~\cite{Prz-letter}, where the supremum in~\eqref{e.sup} was taken over all $x\in J$ but  allowing the periodic point $p$ to belong to $\Sigma$.
The proof was simpler in that case as it did not rely on the telescope result~\eqref{eq:shrink}. The result in~\cite{Prz-letter} has been applied and referred to in~\cite{Gooran:98}.

Another proof of the weaker statement, that is, allowing $p\in \Sigma$, can be given by constructing a measure $\mu$ that is an accumulation of the sequence of measures $\frac 1n \sum_{k=0}^{n-1} \delta_{f^k(x)}$ as $n\to\infty$, where $\delta_y$ denotes the Dirac measure supported at $y$. Here $x$ and $n$ should be chosen to give an approximation of $\chi^+$ by $a(x,n)/n$.
One finds $p$ using Katok's method, see~\cite[Chapter 11.6]{PrzUrb:}.
This easy `ergodic' proof is in fact a part of the proof in Section~\ref{sec:final-3}.

Notice that the proof in Section~\ref{sec:final-3} does not yield the periodic specification property below because it bases on a specification of, maybe short,   special sub-blocks of a given piece of a trajectory.
}\end{remark*}

\begin{proof}[Proof of Proposition~\ref{prop:monic}]
By Lemma~\ref{lem:caf2} for every $x\notin\Sigma$ we have
\[
\overline\chi(x)\le \sup\{ \chi(p)\colon p\in J\setminus\Sigma\text{ periodic expanding }\}.
\]
By Corollary~\ref{def} this bound is less than or equal to $\widetilde\alpha^+$. This proves the proposition.
\end{proof}

\begin{proposition}\label{prop:specification}
    For every $V$ and $\e$ as in Lemma~\ref{lem:caf2} there exist an integer $N>0$ and $\e_1>0$ such that for every point $x\in J\setminus V$ and $n\ge N$ with
$f^n(x)\notin V$ satisfying $a(x,n)\ge \chi^+(J\setminus V)-\e_1$,
there exists an integer $m\in\{n(1-\e), \ldots, n\}$ and a
periodic point $p\in J$ of period at most $n+N$, such that
\begin{enumerate}
\item[1.] $\displaystyle\chi(p)\ge \chi^+(J\setminus V)-\e$\\[-0.3cm]
\item[2.] $\displaystyle {\rm{dist}}(f^j(x),f^j(p))\le \exp
(-(m-j)(\chi^+(J\setminus V)-\e))$ for all $j=4,5,...,m$.
\end{enumerate}
\end{proposition}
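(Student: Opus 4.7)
The plan is to refine the construction behind Lemma~\ref{lem:caf2}, quantitatively tracking how the resulting periodic orbit shadows the orbit of $x$. Reading the hypothesis as $a(x,n)\ge n(\chi^+(J\setminus V)-\varepsilon_1)$, the first step is to locate a Pliss hyperbolic time $m\in\{n(1-\varepsilon),\dots,n\}$ for $x$ with exponent $\chi^+(J\setminus V)-\varepsilon$ and with $f^m(x)\notin V$. This uses the argmax/telescope device from the proof of Lemma~\ref{lem:caf2}, strengthened by a Pliss density estimate so that the set of Pliss times has density close to $1$ and thus intersects the tail interval, together with the $\le\card\Sigma$-step shift already employed in that proof to ensure $f^m(x)\notin V$. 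Once $m$ is fixed, the telescope bound~\eqref{eq:shrink} yields
\[
\diam B_j\le rK_1\,e^{-(m-j)(\chi^+(J\setminus V)-\varepsilon)}
\]
for the pullbacks $B_j\eqdef f^{-(m-j)}_{f^j(x)}(B(f^m(x),r))$, $0\le j<m$, and the case analysis carried out in Lemma~\ref{lem:caf2} shows that each $B_j$ avoids $\Crit$.

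Next I would close up the orbit as in the final part of the proof of Lemma~\ref{lem:caf2}, but based at time $m$ rather than~$n$. Apply Lemma~\ref{lem:caf1} at $x$ to obtain integers $I,J\le M$, a point $z\in f^{-J}(f^I(x))$, and a set $A=f^{-N_0}(z)$ which is $(r/2)$-dense in $J$ with all intermediate iterates at distance at least $\delta$ from $\Crit$. Pick $w\in A$ within $r/2$ of $f^m(x)$ and form the pullback
\[
\widetilde B(w)\eqdef f^{-(m-I+N_0+J)}\bigl(B(f^m(x),r)\bigr)\subset B(f^m(x),5r/6),
\]
on which $f^{m-I+N_0+J}$ is univalent. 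The corresponding inverse branch is a contracting self-map of $B(f^m(x),r)$ and hence admits a fixed point $p_0\in\widetilde B(w)$ of period $T=m-I+N_0+J\le n+N$, where $N\eqdef N_0+M$. The orbit of $p_0$ is forced to trace the pullback chain: $f^{N_0+J+s}(p_0)\in B_{I+s}$ for $s=0,\dots,m-I$. Taking the periodic point of the proposition to be the cyclic iterate $p\eqdef f^{(N_0+J-I)\bmod T}(p_0)$ then gives $f^j(p)\in B_j$ for every $j=I,\dots,m$; combined with the diameter estimate above this yields item~2 (the uniform factor $rK_1$ being absorbed by slightly tightening the exponent), with the lower index~$4$ in the statement corresponding to the uniform bound $I\le M$ supplied by Lemma~\ref{lem:caf1}.

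Item~1 follows because the orbit of $p$ spends its first $m-I\ge n(1-\varepsilon)-M$ iterates in bounded-distortion shadow of the Pliss portion of the orbit of $x$, contributing at least $(m-I)(\chi^+(J\setminus V)-\varepsilon)$ to $\log\lvert(f^T)'(p)\rvert$, while the remaining $N_0+J\le N$ iterates contribute only $O(1)$ uniformly in~$n$; dividing by $T\le n+N$ then gives $\chi(p)\ge\chi^+(J\setminus V)-\varepsilon$ for $n$ sufficiently large.

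\emph{The main obstacle} is the very first step: in Lemma~\ref{lem:caf2} the Pliss hyperbolic time could be placed anywhere in $\{1,\dots,n\}$, whereas here it must lie in the narrow tail $\{n(1-\varepsilon),\dots,n\}$. Forcing it there requires taking $\varepsilon_1$ much smaller than $\varepsilon^2/(\sup_J\log\lvert f'\rvert-\chi^+(J\setminus V))$, so that the Pliss density lemma guarantees a Pliss time in every sub-interval of relative length~$\varepsilon$; all the remaining steps are then a careful bookkeeping of the construction already developed in the proof of Lemma~\ref{lem:caf2}.
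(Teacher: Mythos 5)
Your overall architecture matches the paper's: locate a Pliss hyperbolic time in the tail $\{n(1-\e),\ldots,n\}$, then run the closing construction of Lemma~\ref{lem:caf2} (via Lemma~\ref{lem:caf1}) from that time, and read off the estimates. The closing, shadowing, and exponent estimates you outline are correct and are essentially what the paper does. However, there is a genuine gap in the one step you yourself flag as ``the main obstacle.''

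The Pliss density lemma does \emph{not} yield a Pliss-time density close to $1$. With $a_i=\log\lvert f'(f^i(x))\rvert$, upper bound $A=\sup_J\log\lvert f'\rvert$, average threshold $c=\chi^+(J\setminus V)-\e_1$ and Pliss threshold $c'=\chi^+(J\setminus V)-\e$, the Pliss lemma guarantees density at least $(c-c')/(A-c')=(\e-\e_1)/(A-\chi^++\e)$, which is bounded away from $1$ whenever $A-\chi^+$ is not small --- shrinking $\e_1$ cannot fix this, since the density is at most $\e/(A-\chi^++\e)$. Moreover, a positive density of Pliss times says nothing about their hitting every subinterval of relative length $\e$; they could cluster. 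The actual mechanism in the paper is different: because $x\in J\setminus V$ and $\chi^+(J\setminus V)$ is defined as a $\limsup$ of suprema over $J\setminus V$, one has the much stronger, $m$-proportional upper bound $a(x,m)\le m(\chi^++t\e)$ for all $m\ge N(t\e)$ (compare~\eqref{e.defchi}). Taking $A(m)=a(x,m)-m(\chi^+-\e)$, the hypothesis gives $A(n)\ge n\e(1-t)$ while the upper bound gives $A(m)\le m\e(1+t)$, so the argmax of $A$ (which is automatically a Pliss time) must exceed $n\frac{1-t}{1+t}$; choosing $t$ with $\frac{1-t}{1+t}>1-\e$ and setting $\e_1=t\e$ finishes the location step. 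Your proposal never invokes the upper bound coming from the limsup definition of $\chi^+(J\setminus V)$, and without it the tail localization does not follow.

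A secondary inaccuracy: you attribute the index $4$ in item~2 to the bound $I\le M$ from Lemma~\ref{lem:caf1}, but $M$ need not equal $4$. In the paper the $4$ comes from Lemma~\ref{lem:at most four}: since $\card\Sigma\le 4$, every point has, after at most four forward iterates, a backward branch avoiding $\Crit$, so one may take $i\le 4$ in the application of Lemma~\ref{lem:caf1}. This is what pins down the starting index $j=4$.
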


\begin{proof}
We just look more carefully at the proof of Lemma~\ref{lem:caf2}.
Consider an arbitrary small $t>0$. Assume that $x$ and $n$ satisfy
$a(x,n)\ge n(\chi^+ -t\e)$.

By definition of $\chi^+=\chi^+(J\setminus V)$ we can also assume, compare (\ref{e.defchi}), that
$a(x,m)\le m(\chi^+ +t\e)$ for all $m\in\{N(t\e),\ldots,n-1\}$, for some constant $N(t\e)$ depending only on $t\e$.

Then we find a number $n'\le n$ that is a Pliss hyperbolic time for $x$ with
 exponent $\chi^+-\varepsilon$, as in the proof of Lemma~\ref{lem:caf2}.  To estimate $n'$, notice that
\[
A(n)=a(x,n) - n (\chi^+-\varepsilon)\ge n\e (1-t),
\]
whereas
\[
A(m)=a(x,m) - m(\chi^+-\varepsilon)\le m(\chi^+ + t\e) - m(\chi^+ -\e)= m\e (1+t).
\]
Hence, for $m < n {1-t \over 1+t}$, we obtain $A(m)<A(n)$.
Notice also that for $m\le N(t\e)$ we have $A(m)<n\e(1-t) \le A(n)$ for $n$ large enough since
$\sup\, \lvert f'\rvert <\infty$.
In consequence, the positive integer $n'=m$
maximizing $A(m)$ is bigger or equal to $n {1-t \over 1+t}$.
Finally we choose $t$ so that  ${1-t \over 1+t}>1-\e$.

Next in the proof of Lemma~\ref{lem:caf2} we have increased $n'$ to achieve $f^{n'}(x)\notin V$.
Here the increase is not beyond $n$ since we have already assumed $f^n(x)\notin V$.

The rest of the proof is the same. The mysterious indices $j=4,5,\ldots$  in
the assertion comes from Lemma \ref{lem:at most four} implying
that at most fourth iteration of any point has a backward branch
omitting critical points, hence $i\le 4$.
\end{proof}

\section*{Acknowledgement}

KG has been supported by the Alexander von Humboldt Foundation Germany, CNPq and FAPERJ, Brazil. The research of FP and MR were supported by the EU FP6 Marie Curie programmes SPADE2 and CODY and by the Polish MNiSW Grant NN201 0222 33 `Chaos, fraktale i dynamika konforemna'. JR-L was partially supported by FONDECYT N 1100922

\bibliographystyle{amsplain}

\end{document}